%~~~PACKAGES~~~%

\documentclass[reqno]{amsart}
\usepackage[english]{babel}
\usepackage{amsmath,amscd,amssymb,amsthm,amsxtra}
\usepackage{microtype}
\usepackage{mathrsfs}
\usepackage{accents}
\usepackage{color,graphicx,esint}
\usepackage{epsfig,epstopdf}
\usepackage[margin=1.2in]{geometry}

%~~~STYLES~~~%

\theoremstyle{plain}
\newtheorem{theorem}{Theorem}[section]
\newtheorem{corollary}[theorem]{Corollary}
\newtheorem{proposition}[theorem]{Proposition}
\newtheorem{lemma}[theorem]{Lemma}

\theoremstyle{definition}
\newtheorem{definition}[theorem]{Definition}
\newtheorem{remark}[theorem]{Remark}

\theoremstyle{remark}
\numberwithin{theorem}{section}
\numberwithin{equation}{section}
\numberwithin{figure}{section}

%~~~SHORTCUTS, ETC.~~~%

\def\N{\mathbb{N}}

\def\R{\mathbb{R}}

\def\1{{\bf 1}}
\def\e{\mathrm{e}}
\def\d{\mathrm{d}}

\def\a{\alpha}
\def\b{\beta}

\def\g{\gamma}
\def\G{\Gamma}
\def\k{\kappa}
\def\l{\lambda}

\def\nab{\nabla}
\def\om{\omega}

\def\sig{\sigma}

\def\vep{\varepsilon}
\def\vphi{\varphi}

\def\bLa{\bar{L}_a}
\def\La{L_a}
\def\pa{\partial}
\def\na{\nabla}

\def\P{\mathcal{P}}

\DeclareMathOperator{\DIV}{div}

\DeclareMathOperator{\Span}{span}

%~~~TITLE & AUTHORS~~~%

\begin{document}

\title[Higher Free Boundary Regularity in the Fractional Obstacle Problem]{Higher Regularity of the Free Boundary in the Obstacle Problem for the Fractional Laplacian}

\author[Y.\ Jhaveri]{Yash Jhaveri}
\address{Department of Mathematics, The University of Texas at Austin,
Austin, TX 78712, USA}
\email{yjhaveri@math.utexas.edu}

\author[R.\ Neumayer]{Robin Neumayer}
\address{Department of Mathematics, The University of Texas at Austin,
Austin, TX 78712, USA}
\email{rneumayer@math.utexas.edu}

%~~~ABSTRACT~~~%
\begin{abstract}
We prove a higher regularity result for the free boundary in the obstacle problem for the fractional Laplacian via a higher order boundary Harnack estimate.
% \vspace{3mm}
% \noindent {\bf Keywords:} boundary Harnack, fractional Laplacian, free bondary, Obstacle problem, Schauder estimates
\end{abstract}
\maketitle
\vspace{-0.5cm}
%~~~MATH~~~%

\section{Introduction and Main Results}

In this paper, we investigate the higher regularity of the free boundary in the fractional obstacle problem. 
We prove a higher order boundary Harnack estimate, building on ideas developed by De Silva and Savin in \cite{DS3, DS4, DS5}. 
As a consequence, we show that if the obstacle is $C^{m,\beta}$, then the free boundary is $C^{m-1, \alpha}$ near regular points for some $0 < \a \leq \b$.
In particular, smooth obstacles yield smooth free boundaries near regular points.

\subsection{The Fractional Obstacle Problem.}
For a given function (obstacle) $\vphi \in C(\R^n)$ decaying rapidly at infinity and $s\in (0,1)$, a function $v$ is a solution of the fractional obstacle problem if
\begin{equation}
\label{eqn: obstacle problem}
\begin{cases}
v(x) \geq \vphi(x)  &\text{in } \R^n\\
\lim_{|x| \to \infty} v(x) = 0 &\text{on }\R^n \\
(-\Delta)^{s} v(x) \geq 0 &\text{in } \R^n\\
(-\Delta)^{s} v(x) = 0 &\text{in } \{ v > \vphi \}
\end{cases}
\end{equation}
where the $s$-Laplacian $(-\Delta)^{s}$ of a function $u$ is defined by
\[
(-\Delta)^{s} u(x) := c_{n,s} \, {\rm PV} \int_{\R^n} \frac{u(x) - u(x+z)}{|z|^{n+2s}} \, \d z.
\]
The sets 
\[
\P:= \{ v  = \vphi \} \qquad \text{and} \qquad \G := \pa \{ v  = \vphi \}
\]
are known as the {\it contact set} and the {\it free boundary} respectively.

The fractional obstacle problem appears in many contexts, including the pricing of American options with jump processes (see \cite{CT} and the Appendix of \cite{BFRO} for an informal discussion)
and the study of the regularity of minimizers of nonlocal interaction energies in kinetic equations (see \cite{CDM}).

While the obstacle problem for the fractional Laplacian is nonlocal, it admits a local formulation thanks to the extension method (see \cite{CS, MO}).
Specifically, one considers the $a$-harmonic\footnote{\,We say a function $u$ is \textit{$a$-harmonic} if $L_a u =0$.} extension $\tilde{v}$ of $v$ to the upper half-space $\R^{n+1}_+ := \R^n \times (0,\infty)$:
\[
\begin{cases}
\La \tilde{v}(x,y) = 0 &\text{in } \R^{n+1}_+ \\
\tilde{v}(x,0) = v(x) &\text{on } \R^n
\end{cases}
\] 
where
\[
\La u(x,y) := \DIV(|y|^a \nab u(x,y)) \qquad\text{and}\qquad a: = 1-2s \in (-1,1).
\]
The function $\tilde{v}$ is obtained as the minimizer of the variational problem
\[
\min\bigg\{ \int_{\R^{n+1}_+} |\nab u|^2 \, |y|^a \d x \, \d y \,:\, u \in H^1(\R^{n+1}_+, |y|^a), \, u(x,0) = v(x) \bigg\}
\]
and satisfies
\[
-\lim_{y \to 0} |y|^a\tilde{v}(x,y) = (-\Delta)^s v(x) \qquad\forall \, x \in\R^n.
\]
After an even reflection across the hyperplane $\{ y = 0 \}$, \eqref{eqn: obstacle problem} is equivalent to the following local problem. 
For a given $a\in (-1,1)$ and a function $\vphi \in C(\R^n)$ decaying rapidly at infinity, a function $\tilde{v}$ is a solution of the localized fractional obstacle problem if it is even in $y$ and satisfies
\begin{equation}
\label{eqn: obstacle problem local}
\begin{cases}
\tilde{v}(x,0) \geq \vphi(x) &\text{on } \R^n\\
\lim_{|(x,y)| \to \infty} \tilde{v}(x,y) = 0 &\text{on } \R^{n+1} \\
\La \tilde{v}(x,y) \leq 0 &\text{in } \R^{n+1} \\
\La \tilde{v}(x,y) = 0 &\text{in } \R^{n+1} \setminus \{ \tilde{v}(x,0) = \vphi(x) \}.
\end{cases}
\end{equation}
When $a = 0$, i.e., $s = 1/2$, the operator $\La$ is the Laplacian, and \eqref{eqn: obstacle problem local} is the well-known Signorini (thin obstacle) problem, which can be stated not only in all of $\R^{n+1}$, but in suitable bounded domains of $\R^{n+1}$.
For example, a typical formulation of the Signorini problem is in $B_1 \subset \R^{n+1}$:
\begin{equation}
\label{eqn: signorini}
\begin{cases}
\tilde{v}(x,0) \geq \vphi(x) &\text{on } B_1 \cap \{ y = 0 \}\\
\tilde{v}(x,y) = g(x,y) &\text{on } \pa B_1 \\
\Delta \tilde{v}(x,y) \leq 0 &\text{in } B_1 \\
\Delta \tilde{v}(x,y) = 0 &\text{in } B_1 \setminus \{ \tilde{v}(x,0) = \vphi(x) \}.
\end{cases}
\end{equation}

Primary questions in obstacle problems are the regularity of the solution and the structure and regularity of the free boundary. 
The local formulation of the fractional obstacle problem, \eqref{eqn: obstacle problem local}, allows the use of local PDE techniques to study the regularity of the solution and the free boundary. 
Under mild conditions on the obstacle\footnote{\,In \cite{CSS}, the obstacle is assumed to be $C^{2,1}$, though in \cite{CDS}, this is relaxed, and $\vphi$ is only assumed to be $C^{1,s+\vep}$.}, Caffarelli, Salsa, and Silvestre show, in \cite{CSS}, that the solution of \eqref{eqn: obstacle problem} is optimally $C^{1,s}$ using the almost-optimal regularity of the solution shown via potential theoretic techniques in \cite{S}. 
Furthermore, studying limits of appropriate rescalings of the solution (blowups) at points on the free boundary, they show the blowup at $x_0 \in \G$ must either have (a) homogeneity $1+s$ or (b) homogeneity at least $2$. 
The points at which the blowup is $(1+s)$-homogeneous are known as {\it regular points} of $\G$.
In \cite{CSS}, they show that the regular points form a relatively open subset of $\G$ and that $\G$ is $C^{1,\sig}$ near these regular points for some $0<\sig<1$. 
For the case $s=1/2$, analogous results were first shown in \cite{AC, ACS}. 
The structure of the free boundary away from regular points is investigated, for example, in \cite{BFRO} and \cite{GP}.

\subsection{Main Result and Current Literature.}
Our main result is the following: 

\begin{theorem}
\label{thm: main}
Let $\vphi \in C^{m,\beta}(\R^n)$ with $m \geq 4$ and $\b \in (0,1)$ or $m=3$ and $ \beta =1$.
Suppose $x_0$ is a regular point of the free boundary $\G = \G(v)$ of the solution $v$ to \eqref{eqn: obstacle problem}.
Then, $\G \in C^{m-1,\a}$ in a neighborhood of $x_0$ for some $\a \in (0,1)$ depending on $s,n,m$, and $\beta$.
In particular, if $\vphi \in C^\infty(\R^n)$, then $\G \in C^\infty$ near regular points.
\end{theorem}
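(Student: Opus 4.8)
The plan is to reduce the higher regularity of the free boundary to a higher order boundary Harnack principle for the operator $\La = \DIV(|y|^a\nab\,\cdot\,)$ in slit domains, following the bootstrap philosophy of De Silva and Savin. First I would recall the known $C^{1,\sig}$ regularity of $\G$ near a regular point $x_0$ from \cite{CSS}, and after a translation and rotation assume $x_0 = 0$ and that $\G$ is, locally, a $C^{1,\sig}$ graph $x_n = g(x')$ with the contact set $\P$ lying on one side. The extension $\tilde v$ is then $\La$-harmonic in $B_1 \setminus (\P\times\{0\})$, and the two quantities that control the geometry of $\G$ are the tangential derivatives $\pa_i\tilde v$ for $i = 1,\dots,n-1$ (which vanish continuously on the contact set and are $\La$-harmonic away from it) and a suitable normal derivative. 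The slope of $\G$ in the $e_i$ direction is, up to lower order terms, the ratio $\pa_i\tilde v / \pa_e \tilde v$ of two $\La$-harmonic functions that vanish on the slit $\P\times\{0\}$ at the same rate (comparable to $\dist^{s}$ to $\G$). The heart of the matter is therefore to show that such a ratio inherits one more derivative than the domain: if $\G\in C^{k,\alpha}$ then the ratio is $C^{k,\alpha}$ up to the slit, hence $g\in C^{k+1,\alpha}$, and one iterates.

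The key steps, in order, are as follows. (i) Flatten the free boundary: introduce a change of variables straightening the $C^{k,\alpha}$ graph $g$ to the hyperplane $\{x_n = 0\}$, which transforms $\La$ into a variable-coefficient operator of the same degenerate/singular type with $C^{k-1,\alpha}$ coefficients, and transforms the slit into the flat half-hyperplane $\{x_n \le 0, y = 0\}$. (ii) Establish a boundary Harnack inequality in this model slit configuration: two nonnegative $\La_{\mathrm{var}}$-harmonic functions vanishing on the slit are comparable, and moreover their quotient is Hölder continuous up to the slit — this is the base case, essentially already available from the De Silva–Savin circle of ideas and from \cite{CSS}. (iii) Prove the higher order statement by the Schauder-type iteration of De Silva–Savin: assuming the quotient $w = w_1/w_2$ is $C^{j,\alpha}$, subtract off its Taylor polynomial and use an approximation/compactness argument — where one competes $w$ against polynomials that are "$\La$-harmonic with slit boundary conditions" and uses an improvement-of-flatness / Liouville argument at the blowup scale — to gain the next order; the regularity of the coefficients (coming from $g\in C^{k,\alpha}$, hence coefficients in $C^{k-1,\alpha}$) is exactly what fuels each step, and one must track that the gain in $g$ feeds back consistently. (iv) Translate the $C^{k+1,\alpha}$ regularity of the quotient back to $C^{k+1,\alpha}$ regularity of $g$, re-flatten, and repeat until the regularity of $\vphi$ (which enters as a right-hand side / a modification of the harmonicity of $\pa_i\tilde v$, producing a $C^{m-2,\beta}$-type inhomogeneity) is exhausted, yielding $\G\in C^{m-1,\alpha}$; the hypotheses $m\ge 4$, $\beta\in(0,1)$ or $m=3$, $\beta=1$ are what guarantee enough initial smoothness for the scheme to start and to iterate the required number of times.

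The main obstacle I anticipate is handling the degenerate/singular weight $|y|^a$ together with the slit geometry simultaneously in the higher order scheme. Unlike the classical (thin) obstacle case $a=0$, the natural "harmonic polynomials" adapted to the slit are not polynomials at all but homogeneous solutions involving fractional powers (the $\dist^{s}$ and $\dist^{1+s}$ behavior), so the polynomial approximation step of De Silva–Savin must be carried out in the right class of model solutions, and one must show a Liouville-type rigidity for $\La$-harmonic functions in the slit half-space with controlled growth. Establishing that this restricted class has the correct dimension and that the compactness/normalization argument closes — i.e., that no spurious homogeneities obstruct the gain of a derivative — is the technical crux; once the base boundary Harnack with Hölder quotients is in hand, the inductive step is conceptually the De Silva–Savin argument, but it must be redone carefully in the weighted, slit setting, and the precise dependence of $\alpha$ on $s,n,m,\beta$ is extracted from the Liouville exponent gap in that argument.
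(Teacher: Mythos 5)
Your global architecture --- bootstrapping the regularity of $\G$ through a higher order boundary Harnack principle for quotients of the form $\pa_i \tilde v/\pa_n \tilde v$ in the slit domain, with a flat-slit classification result as the engine --- is the same as the paper's, and you have correctly located the technical crux in the classification of homogeneous $a$-harmonic functions on the slit. However, there are genuine gaps. First, your step (i) (straightening the graph) is not what the paper does and creates a problem you do not address: flattening a $C^{k,\a}$ graph produces coefficients that are only $C^{k-1,\a}$, and the model solutions for the resulting variable-coefficient degenerate operator are no longer explicit, so the ``right class of model solutions'' you invoke in step (iii) is not available after flattening. The paper avoids this entirely by staying in the original coordinates and approximating solutions by $U_a P(x,r)$, where $r$ is the distance to $\G$ and $U_a = ((r+d)/2)^s$; the whole notion of approximating polynomial (the linear system \eqref{eqn: poly coeffs HOBH}) is built on this and has no obvious analogue in your flattened picture. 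The flat case itself is then resolved not by an abstract Liouville argument but by reducing to $n=2$, unfolding the slit via $z \mapsto z^2$, and exhibiting an explicit orthonormal basis of homogeneous solutions in $L^2(\pa D_1, |2z_1z_2|^a)$ (with completeness via M\"untz--Sz\'asz); your proposal leaves this step, which you yourself call the crux, unproved.

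Second, and more seriously for the nonzero obstacle: $\pa_i \tilde v$ does \emph{not} vanish on the contact set (there $\tilde v = \vphi$), so the functions to which any boundary Harnack estimate can be applied are $\pa_i(\tilde v - \tilde\vphi)$ for a suitable extension $\tilde\vphi$ of $\vphi$ to $\R^{n+1}$. Your proposal treats the obstacle as ``a right-hand side'' in passing, but the perturbative scheme requires the right-hand side $\La \pa_i(\tilde v - \tilde\vphi)$ to vanish to order $m-3+\b$ at the base point; this forces the extension \eqref{eqn: expansion} built from the Taylor polynomial of $\vphi$ \emph{at that point}, hence a different extension at every point of $\G$ and a patching argument over a family of cones (Proposition~\ref{prop: deriv estimates}). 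Moreover, because $\La U \neq 0$ for the denominator $U = \pa_n(\tilde v - \tilde\vphi)$, the gradient expansions of $U$ that the quotient estimate consumes cannot be obtained by formally differentiating the Schauder expansion; justifying them is the content of Propositions~\ref{prop: deriv estimates} and \ref{lem: derivative est C1a} and is one of the main new difficulties relative to the zero-obstacle Signorini case. Finally, the exponent $\a$ is simply $\min\{\b,\sig\}$ with $\sig$ the initial H\"older exponent from \cite{CSS}, rather than something extracted from a Liouville exponent gap.
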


Starting from the $C^{1,\sig}$ regularity obtained in \cite{CSS}, the H\"{o}lder exponent $\a$ obtained in Theorem~\ref{thm: main} is the minimum of $\b$ and $\sig$.  
In order to prove Theorem~\ref{thm: main}, we establish a higher order boundary Harnack estimate for the operator $L_a$. We prove this estimate in slit domains, that is, domains in $\R^{n+1}$ from which an $n$-dimensional slit $\P \subset \{ x_{n+1} = 0\}$ is removed. Recall that the classical boundary Harnack principle states that the quotient of two positive harmonic functions that vanish continuously on a portion of the boundary of a Lipschitz domain is H\"{o}lder continuous up to the boundary (see \cite{CFMS,HW}).
In \cite{DS4}, De Silva and Savin remarkably extend this idea to the {\it higher order boundary Harnack principle}, proving that the quotient of two positive harmonic functions that vanish continuously on a portion of the boundary of a $C^{k,\a}$ domain is $C^{k,\a}$. Motivated by applications to the Signorini problem, in \cite{DS5}, they
prove such a higher order boundary Harnack principle on slit domains. To do so, they assume $\G := \pa_{\R^n} \P$ is locally the graph of a function of the first $n-1$ variables, and they consider a coordinate system $(x,r)$ on $\R^{n+1}$ where $x \in \R^n$ and $r$ is the distance to $\G$.
They also define a corresponding notion of H\"{o}lder regularity $C^{k,\a}_{x,r}$ (see Section~\ref{sec: Prelim}) that restricts to the standard notion of $C^{k,\a}$ on $\G$ when $\G \in C^{k,\a}$.

\begin{theorem}[De Silva and Savin, \cite{DS5}]
\label{thm: DS5 HOBH}
Let $u$ and $U>0$ be harmonic functions in $B_1 \setminus \P \subset \R^{n+1}$ that are even in $x_{n+1}$ and vanish continuously on the slit $\P$. 
Suppose $ 0 \in \G := \partial_{\R^n} \P $ with $\G \in C^{k,\a}$ for $k \geq 1$ and $\| \G \|_{C^{k,\a}}\leq 1$. If  $\|u\|_{L^\infty(B_1)} \leq 1$ and $U(\nu(0)/2) = 1$, where $\nu$ is the outer unit normal to $\P$,
then
\[
\bigg\|\frac{u}{U}\bigg\|_{C^{k,a}_{x,r}(\G \cap B_{1/2})} \leq C
\]
for some $C = C(n,k,\a) > 0$.
\end{theorem}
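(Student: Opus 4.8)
I would prove Theorem~\ref{thm: DS5 HOBH} by the iterative polynomial-approximation scheme --- an ``improvement of flatness'' for quotients of solutions --- that De Silva and Savin developed for higher order boundary Harnack estimates in \cite{DS4, DS5}, here adapted to the degenerate geometry near the slit. Since $\G \in C^{k,\a}$, the first step is to flatten the slit: choose a $C^{k,\a}$ diffeomorphism $\Phi$, defined near $0$, even in $x_{n+1}$ and preserving $\{x_{n+1}=0\}$, mapping $\P$ onto the flat half-hyperplane $\P_0 := \{x_{n+1}=0,\ x_n \le 0\}$ and $\G$ onto the flat edge $\G_0 := \{x_n = x_{n+1} = 0\}$. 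After this change of variables $u$ and $U$ solve a uniformly elliptic equation $\DIV(A\nab\,\cdot\,) = 0$ with $A \in C^{k-1,\a}$ and $A(0) = \Id$, remain even in $x_{n+1}$ and vanish on $\P_0$, and the space $C^{k,a}_{x,r}$ of Section~\ref{sec: Prelim} transforms into the analogous space built from $r = \rho := \dist(\,\cdot\,,\G_0) = (x_n^2 + x_{n+1}^2)^{1/2}$. The classical boundary Harnack principle on slit domains (\cite{ACS}; cf.\ \cite{CFMS, HW}), applied to $u$ and $U$, shows $w := u/U$ extends continuously to $\G_0$ and lies in $C^{0,\a_0}_{x,r}$ for some $\a_0>0$; this is the base case of an induction on the order of approximation, and it also records that $u$ and $U$ vanish at exactly the rate $\rho^{1/2}$ near $\G_0$, so $w$ is genuinely bounded.

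\emph{The model equation.} A direct computation, using that $A$ is symmetric, shows $w$ solves the degenerate divergence-form equation $\DIV(U^2 A\nab w) = 0$ in $B_1 \setminus \P_0$. The relevant model weight is $U_0^2$, where --- writing $(x_n, x_{n+1}) = \rho(\cos\theta, \sin\theta)$ with $\theta \in (-\pi,\pi)$ --- the function $U_0 := \rho^{1/2}\cos(\theta/2)$ is the canonical positive harmonic function, even in $x_{n+1}$, vanishing on $\P_0$; one has $U_0^2 = \tfrac12(\rho + x_n) \asymp \rho$ away from $\P_0$. The heart of the argument is a complete analysis of the model operator $\DIV(U_0^2\nab\,\cdot\,)$ near the codimension-two edge $\G_0$: a sharp interior regularity and polynomial-approximation statement up to $\G_0$, equivalently a Liouville-type classification asserting that every entire solution $\bar w$ of $\DIV(U_0^2\nab \bar w)=0$ in $\R^{n+1}\setminus\P_0$, even in $x_{n+1}$, with growth $|\bar w(X)| \lesssim (1+|X|)^{k+\a}$, is a polynomial of degree $\le k$ in the $(x,r)$-adapted variables. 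I would obtain this by a mode decomposition in the split coordinates $\big(x', (x_n,x_{n+1})\big)$: expanding in the half-integer angular modes on the slit circle and Fourier-analyzing in $x'$ reduces matters to ODEs whose admissible homogeneous solutions --- those compatible with evenness in $x_{n+1}$, finite weighted Dirichlet energy near $\G_0$, and the growth bound --- are exactly the claimed adapted monomials.

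\emph{Improvement of flatness and iteration.} The key lemma states: there are $\theta \in (0,1)$ and $C>0$ so that if $\|w - P\|_{L^\infty(B_1\setminus\P_0)} \le \ep$ for some $(x,r)$-adapted polynomial $P$ of degree $\le k$, and $A$ is $\ep$-close in $C^{k-1,\a}(B_1)$ to a constant symmetric matrix, then there is another such polynomial $\widetilde P$ with $\|P - \widetilde P\| \le C\ep$ and $\|w - \widetilde P\|_{L^\infty(B_\theta\setminus\P_0)} \le \theta^{k+\a}\ep$. I would prove it by contradiction and compactness: given a violating sequence $(u_j, U_j, A_j, P_j, \ep_j)$, normalize $\bar w_j := (u_j/U_j - P_j)/\ep_j$ and use standard interior elliptic estimates for $\bar w_j$ away from $\G_0$, together with the boundary Harnack principle applied to $u_j, U_j$, to obtain a uniform modulus of continuity for $\bar w_j$ up to $\G_0$; extract a locally uniform limit $\bar w$. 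Since $A_j \to \Id$ and, by boundary Harnack, the (normalized) rescalings of $U_j$ converge to $U_0$, the limit $\bar w$ solves the model equation, obeys the growth bound, and --- from the choice of $P_j$ as the appropriate projection --- is $L^2(U_0^2\,\d X)$-orthogonal to all lower-degree adapted polynomial solutions; by the model analysis above, $\bar w$ is itself an adapted polynomial of degree $\le k$, hence, to the order relevant for the conclusion, equals its own Taylor approximation, contradicting the supposed failure of improvement. Iterating the lemma dyadically at $0$ produces an adapted polynomial $P_0$ with $\|w - P_0\|_{L^\infty(B_\rho\setminus\P_0)} \lesssim \rho^{k+\a}$; running the same argument at every point of $\G_0\cap B_{1/2}$ and matching the resulting polynomials along $\G_0$ (using $\G_0 \in C^{k,\a}$ to compare nearby base points) gives $w \in C^{k,a}_{x,r}(\G_0\cap B_{1/2})$ with the stated bound, which transforms back under $\Phi^{-1}$ to the claim. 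The normalizations $\|u\|_{L^\infty(B_1)}\le 1$ and $U(\nu(0)/2) = 1$ are precisely what is needed to make all constants depend only on $n, k, \a$.

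\emph{Main obstacle.} The genuine difficulty is the second step: setting up the correct weighted-Sobolev framework and class of $(x,r)$-adapted polynomials for the operator $\DIV(U_0^2\nab\,\cdot\,)$, whose weight $U_0^2 \asymp \dist(\,\cdot\,,\G_0)$ degenerates along a codimension-two set, and proving the attendant sharp regularity / Liouville statement --- in particular pinning down exactly which powers of $\rho$ are admissible and ruling out any other slowly growing global solution --- together with the uniform energy and boundary estimates for this degenerate operator that the compactness step requires. Once this model analysis is in hand, controlling the perturbation introduced by flattening (which is why $\G \in C^{k,\a}$, i.e.\ $A \in C^{k-1,\a}$, is the sharp hypothesis) is comparatively routine.
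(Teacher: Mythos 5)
Your outline is correct in substance, but it follows a genuinely different route from the one the paper (following De Silva and Savin \cite{DS5}) uses to prove Theorem~\ref{thm: DS5 HOBH} and its $L_a$ generalizations. You flatten $\G$ first and then pass to the quotient $w=u/U$, which solves the degenerate equation $\DIV(U^2A\nab w)=0$; all the difficulty is then concentrated in a Liouville/polynomial-approximation theorem for the model operator $\DIV(U_0^2\nab\,\cdot\,)$, and the improvement of flatness is run by compactness--contradiction with an orthogonality normalization of the approximating polynomials. The paper never flattens and never divides: it keeps the curved $\G$, works with polynomials $P(x,r)$ in the true distance $r$, defines ``approximating polynomials'' through the explicit linear system for the coefficients of $\La(UP)$ (see \eqref{eqn: poly coeffs HOBH}), and improves flatness constructively -- harmonic replacement in the rescaled slit domain, comparison with the flat-case expansion of Proposition~\ref{prop: Schauder flat boundary}, a barrier as in Lemma~\ref{lem: Simple Barriers}, and an explicit polynomial correction solving a perturbed linear system -- with the flat core handled by the conformal map $z\mapsto z^2$ and an orthonormal basis of homogeneous solutions (Proposition~\ref{prop: 2d flat unzipped}); for $a=0$ that core reduces to reflection. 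Your route buys a cleaner conceptual core (one degenerate operator, no bookkeeping of $A_{\sigma l}$, no separate Schauder expansion of $U$ as in Proposition~\ref{prop: pointwise Schauder Cka}), and your model Liouville theorem in fact reduces to the classical classification of homogeneous harmonic functions on the slit via $w\mapsto U_0w$. What it costs: the flattening drops the coefficients to $C^{k-1,\a}$, so the endpoint regularity must be recovered entirely from the adapted polynomial classes; the compactness step needs uniform H\"older estimates up to the codimension-two edge for the quotient equation (your appeal to boundary Harnack supplies these, but only after the comparability $U\asymp U_0$ is established); and, most importantly for this paper, the identity $\DIV(U^2A\nab(u/U))=0$ breaks as soon as $u$ and $U$ solve inhomogeneous or different equations, which is exactly the generality (Propositions~\ref{prop: HOBH k pos} and \ref{prop: HOBH 1 pos}) the paper needs for nonzero obstacles -- the direct approximation of $u$ by $UP$ absorbs right-hand sides with no structural change. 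For the theorem as stated (both functions harmonic, zero right-hand side) your argument goes through; just note that in the contradiction step the limit lives on a ball, so what you actually need is the local polynomial-approximation form of your model result rather than the entire-space Liouville statement.
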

Here, $\| \G \|_{C^{k,\a}}$ is defined as in \eqref{eqn: gamma norm}.
Theorem~\ref{thm: DS5 HOBH} is used to prove that the free boundary is smooth near regular points for \eqref{eqn: signorini} with $\vphi \equiv 0$ in the same way that the boundary Harnack principle is used to improve the regularity of the free boundary near regular points from Lipschitz to $C^{1,\a}$ in, for example, the classical obstacle problem.
Indeed, if $\G$ is locally the graph of a $C^{k,\a}$ function $\g$ of the first $n-1$ variables, then Theorem~\ref{thm: DS5 HOBH} implies that $\pa_i \tilde{v}/\pa_n \tilde{v}$ is $C^{k,\a}$ on $\G$ where $\tilde{v}$ is the solution to the Signorini problem.
On the other hand, $\G$ is also the zero level set of $\tilde{v}$, and so $\pa_i \g$, for each $i =1,\dots, n-1$, is given by $\pa_i \tilde{v}/\pa_n \tilde{v}$ on $\G$.
Hence, $\g \in C^{k+1,\a}$.
Starting with $k = 1$ and proceeding iteratively, one proves that the free boundary is smooth near regular points.

The proof of the higher order boundary Harnack estimate in this paper is motivated by the global strategy developed by De Silva and Savin to prove Theorem~\ref{thm: DS5 HOBH}. However, some delicate arguments are needed to adapt these ideas to our setting, which we briefly describe here. The proof involves a perturbative argument,
the core regularity result being one in which $\G$ is flat (Proposition~\ref{prop: Schauder flat boundary}). When $a = 0$, the flat case follows from capitalizing on the structural symmetry of the Laplacian; boundary regularity is inherited from interior regularity for a reflection of the solution.
Instead, to handle the case a $\neq 0$, we prove the necessary regularity of our solutions by hand.
First, a reduction argument allows us to focus on the two-dimensional case. 
Here, the specific degeneracy of the operator $\La$, for each $a \neq 0$, forces a specific degeneracy in solutions that vanish on the negative $x$-axis to the equation $\La u = 0$.
We observe this first in global homogeneous solutions.
Then, we prove that our solutions have a well-defined power series-like decomposition in terms of these homogeneous solutions, which in turn yields the regularity result.
(See Section~\ref{sec: Flat}.)

Another new difficulty we encounter comes from considering nonzero obstacles in \eqref{eqn: obstacle problem}.
As discussed above, Theorem~\ref{thm: DS5 HOBH} implies $C^{\infty}$ regularity of the free boundary near regular points in the Signorini problem with zero obstacle, that is, taking $\vphi \equiv 0$ in \eqref{eqn: signorini}.
Yet, taking $\vphi \equiv 0$ in the nonlocal problem \eqref{eqn: obstacle problem} is rather uninteresting: the solution is identically zero.
While the results of \cite{DS5} do extend to \eqref{eqn: obstacle problem} when $s = 1/2$ so long as an extension of $\vphi$ can be subtracted off without producing a right-hand side\footnote{\,For instance, this can be done if $\Delta^m \vphi = 0$ for some $m \in \N$.}, such an extension is not generally possible.  
The new feature of the higher order boundary Harnack estimates here, necessary for treating general obstacles, is that we allow both $\La u$ and $\La U$ to be nonzero when considering the quotient $u/U$.
Still, handling general obstacles in \eqref{eqn: obstacle problem} is quite involved. 
Even after constructing a suitable extension of the obstacle from $\R^n$ to $\R^{n+1}$, one finds a large gap between having and applying these propositions, another consequence of having to work with a degenerate elliptic operator.
(See Section~\ref{sec: Proof of Main Thm}.)

Proposition~\ref{prop: HOBH k pos} and Proposition~\ref{prop: HOBH 1 pos} are the higher order boundary Harnack estimates in the generality needed to prove Theorem~\ref{thm: main}. 
The simplest case of these estimates, however, takes the following form.

\begin{theorem}[Higher Order Boundary Harnack Estimate]
\label{thm: La HOBH}
Let $u$ and $U>0$ be $a$-harmonic functions in $B_1 \setminus \P \subset \R^{n+1}$ that are even in $x_{n+1}$ and vanish continuously on the slit $\P$. 
Suppose $ 0 \in \G := \partial_{\R^n} \P $ with $\G \in C^{k,\a}$ for $k \geq 1$ and $\| \G \|_{C^{k,\a}}\leq 1$. If  $\|u\|_{L^\infty(B_1)} \leq 1$ and $U(\nu(0)/2) = 1$, where $\nu$ is the outer unit normal to $\P$,
then
\[
\bigg\|\frac{u}{U}\bigg\|_{C^{k,a}_{x,r}(\G \cap B_{1/2})} \leq C
\]
for some $C = C(a,n,k,\a) > 0$.
\end{theorem}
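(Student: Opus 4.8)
The plan is to follow the perturbative strategy of De Silva--Savin, reducing Theorem~\ref{thm: La HOBH} to the flat-boundary regularity statement of Proposition~\ref{prop: Schauder flat boundary} and then transferring that statement to curved slits via a change of coordinates. First I would set up the coordinate system $(x,r)$ adapted to $\G$, where $r$ is the distance to $\G$, and observe that since $\G \in C^{k,\a}$ with $\|\G\|_{C^{k,\a}} \le 1$, there is a $C^{k,\a}$ diffeomorphism flattening $\G$ and $\P$ to a piece of the hyperplane $\{x_n = 0, x_{n+1}=0\}$ (i.e.\ the standard half-hyperplane slit). Under this map, $\La u = 0$ and $\La U = 0$ become equations of the form $\tilde L \tilde u = \DIV(|x_{n+1}|^a A \nab \tilde u) = $ (lower-order terms), where $A$ is a $C^{k-1,\a}$ (or $C^{k,\a}$, depending on bookkeeping) uniformly elliptic matrix equal to the identity on the slit. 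So the reduction is: prove a $C^{k,\a}_{x,r}$ estimate for the quotient of two positive solutions to such a perturbed degenerate-elliptic equation vanishing on the flat slit, given the corresponding result (Proposition~\ref{prop: Schauder flat boundary}) for the model operator $\La$ with flat slit.

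Next I would carry out the actual iteration. Normalize $U$ by $U(\nu(0)/2)=1$; by the (zeroth-order) boundary Harnack principle on slit domains together with interior Harnack, $u/U$ is bounded and Hölder continuous up to $\G \cap B_{3/4}$, which gives the base case $k=0$ of an induction on $k$. For the inductive step, suppose $u/U \in C^{k-1,\a}_{x,r}$. The key device of De Silva--Savin is that the difference $w := u - P U$, where $P$ is a suitable polynomial (in the $(x,r)$ variables, with the fractional powers of $r$ dictated by the degeneracy exponent $a$) approximating the Taylor expansion of $u/U$ at a free boundary point, is again a solution of a degenerate equation with a controlled right-hand side, vanishes on the slit, and is small — of size $\rho^{k+\a}$ on $B_\rho$. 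One then feeds $w$ into the flat-case Schauder-type estimate (Proposition~\ref{prop: Schauder flat boundary}) at each scale to gain one derivative: comparing $w$ to the next-order homogeneous correction shows the Taylor polynomial of $u/U$ improves, and summing the geometric series over dyadic scales $\rho = 2^{-j}$ yields the $C^{k,\a}_{x,r}$ bound. Because we allow $\La u$ and $\La U$ to be merely controlled rather than zero (as we must, anticipating the application in Theorem~\ref{thm: main}), Proposition~\ref{prop: HOBH k pos} / Proposition~\ref{prop: HOBH 1 pos} should be stated and used in that right-hand-side generality; Theorem~\ref{thm: La HOBH} is then the special case with zero right-hand side and is an immediate corollary.

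I expect the main obstacle to be the flat-case analysis itself — that is, not the iteration machinery (which is by now fairly robust) but the regularity theory for $\La$ on the flat slit that underlies Proposition~\ref{prop: Schauder flat boundary}, and correctly interfacing it with the curved case. When $a=0$ one simply reflects and uses interior estimates for the Laplacian; for $a \ne 0$ no such symmetry is available, and one must understand \emph{by hand} the structure of $a$-harmonic functions vanishing on the slit. Concretely, as the introduction indicates, the two-dimensional reduction forces solutions to the model equation that vanish on the negative $x$-axis to have an expansion in a specific family of homogeneous $a$-harmonic functions (with exponents depending on $a$), and one must prove that a general solution admits a convergent ``power-series-like'' decomposition into these building blocks with the right coefficient estimates. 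Getting the function spaces ($C^{k,\a}_{x,r}$ versus the natural scaling of the degenerate operator) to match up — especially tracking how the weight $|x_{n+1}|^a$ interacts with the change of variables and which half-integer/$a$-shifted powers of $r$ appear in the admissible polynomials $P$ — is where the delicate work lies, and this is precisely the part flagged as needing new arguments beyond \cite{DS5}.
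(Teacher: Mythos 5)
Your proposal follows the paper's strategy in all essential respects: Theorem~\ref{thm: La HOBH} is indeed obtained in the paper as the special case $T\equiv G\equiv R\equiv F\equiv 0$ of the right-hand-side versions (Propositions~\ref{prop: HOBH k pos} and \ref{prop: HOBH 1 pos}), those are proved by the De Silva--Savin improvement-of-flatness iteration with approximating polynomials in $(x,r)$ over dyadic scales, and the core input is exactly the flat-slit analysis you flag as the hard part (two-dimensional reduction, the explicit homogeneous $a$-harmonic solutions, and the expansion of a general solution in that family, carried out in Section~\ref{sec: Flat} via the substitution $z\mapsto z^2$, Green's identities, and the M\"untz--Sz\'asz theorem). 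The one place you diverge is the opening move: the paper never flattens $\G$ by a diffeomorphism. Doing so would replace $L_a$ by a variable-coefficient degenerate operator with $C^{k-1,\a}$ coefficients, for which Proposition~\ref{prop: Schauder flat boundary} (stated only for the constant-coefficient model operator) gives no direct input, so you would still have to treat the coefficient error perturbatively -- which is precisely what the paper does without flattening, by dilating so that $\|\G\|_{C^{k,\a}}\leq\vep$, absorbing the curvature of $\G$ into the small coefficients $c^{\mu m}_{\sigma l}$ of the linear system defining the approximating polynomial, and invoking the flat case only through a compactness argument at each scale. Since the rest of your plan is carried out in the curved $(x,r)$ geometry anyway, the flattening step is an unnecessary detour rather than a fatal gap. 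Two smaller points: the approximating polynomials for $u/U$ have integer powers of $r$ (the fractional power $r^s$ lives in $U_a$, not in $P$); and calling the theorem an ``immediate corollary'' glosses over the need to verify the gradient expansions \eqref{eqn: grad x}--\eqref{eqn: grad r} for $U$, which in the $a$-harmonic case is dispatched by Remark~\ref{rmk: ahar case} but in the application to the obstacle problem requires the delicate Propositions~\ref{prop: deriv estimates} and \ref{lem: derivative est C1a}.
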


The original approach to proving higher regularity in obstacle problems, pioneered in \cite{KN}, was to use the hodograph-Legendre transform.
These techniques have been extended to prove higher regularity in the Signorini problem with zero obstacle in \cite{KPS} and in thin obstacle problems with variable coefficients and inhomogeneities in \cite{KRS1}. 
On the other hand,  at the same time as \cite{KPS}, De Silva and Savin used the higher order boundary Harnack principle to show higher free boundary regularity in the Signorini problem with zero obstacle in \cite{DS5}, as we discussed above. They also used these techniques to give a new proof of higher regularity of the free boundary in the classical obstacle problem (see \cite{DS4}).
The higher order boundary Harnack approach has been adapted to the parabolic setting, proving higher regularity of the free boundary for the parabolic obstacle problem in \cite{BG} and for the parabolic Signorini problem with zero obstacle in \cite{BGZ}.
We mention that an advantage of the hodograph-Legendre transform approach is that it allows one to prove analyticity of the free boundary near regular points when the obstacle is analytic.
\\

Upon completion of this work, we learned that Koch, R\"{u}land, and Shi in \cite{KRS2} -- independently and at the same time -- have proven an analogous result to our Theorem~\ref{thm: main}.
In contrast to the methods used herein, they employ the partial hodograph-Legendre transform techniques mentioned above.

\subsection{Organization}
In Section~\ref{sec: Prelim}, we introduce some notation and present some useful properties of the operator $\La$.
In Section~\ref{sec: HOBH}, we state and prove a pointwise higher order boundary Harnack estimate.
Section~\ref{sec: Schauder Cka} is dedicated to proving a pointwise Schauder estimate, while Section~\ref{sec: C1a} extends the results of the previous two sections to a lower regularity setting.
In Section~\ref{sec: Flat}, we prove a regularity result for $a$-harmonic functions vanishing continuously on a hyperplane.
Finally, we prove Theorem~\ref{thm: main} in Section~\ref{sec: Proof of Main Thm}. 
\\

\noindent{\bf Acknowledgments.} We wish to thank Alessio Figalli for suggesting this problem.
Part of this work was done while the authors were guests of the \'Ecole Normale Sup\'eriore de Lyon in the fall of 2015; the hospitality of ENS Lyon is gratefully acknowledged. 
RN is supported by the NSF Graduate Research Fellowship under Grant DGE-1110007.

%~~~~~~~~~~~~~~~~~~~~~~~~~~~~~~~~~~~~~~~~~~~~~~~~~~~~~~~%
\section{Preliminaries}
\label{sec: Prelim}

\subsection{Notation and Terminology}\label{subsec: Notation}

Let $X \in \R^{n+1}$ be given by
\[
X = (x', x_n, y) = (x',z) = (x, y)
\]
where $x' \in \R^{n-1}$, $x_n \in \R$ is the $n$th component of $X$, and $y \in \R$ is the $(n+1)$st component of $X$. 
In this way, $x = (x',x_n)\in \R^n$ and $z = (x_n,y) \in \R^2$.
Furthermore, define 
\[
B_\l := \{ |X| < \l \}, \qquad
B_\l^* := \{ |x| < \l \}, \qquad\text{and}\qquad
B_\l' := \{ |x'| < \l \}.
\]

Let $\G$ be a codimension two surface in $\R^{n+1}$ of class $C^{k+2,\alpha}$ with $k \geq -1$.
Then, up to translation, rotation, and dilation, $\G$ is locally given by 
\[
\G = \{ (x',\gamma(x'),0) :  x' \in B_1' \}
\]
where $\gamma : B'_1 \to \R$ is such that
\[
\gamma(0) = 0, \qquad \nab_{x'}\gamma(0) = 0, \qquad\text{and}\qquad \|\gamma\|_{C^{k+2,\alpha}(B'_1)} \leq 1.
\]
We let 
\begin{equation}
\label{eqn: gamma norm}
\| \G\|_{C^{k,\a} }: = \| \g\|_{C^{k,\a}(B_1')}.
\end{equation}

Define the $n$-dimensional {\it slit} $\mathcal{P}$ by
\[
\mathcal{P} := \{ x_n \leq \gamma(x'),\, y = 0 \},
\]
and notice that $\partial_{\R^n} \mathcal{P} = \G$.

Let $d = d(x)$ denote the signed distance in $\R^n$ from $x$ to $\G$ with $d > 0$ in the $\e_n$-direction, and let $r = r(X)$ be the distance to $\G$ in $\R^{n+1}$:
\[
r := (y^2 + d^2)^{1/2}.
\]
Then,
\[
\nab_x r = \frac{d}{r}\nu,  \qquad \nab_x d = \nu, \qquad\text{and}\qquad 
-\Delta_x d=\k 
\]
where $\k = \k(x)$ denotes the mean curvature and $\nu = \nu(x)$ represents the unit normal in $\R^n$ of the parallel surface to $\G$ passing through $x$.
Moreover, set
\begin{equation}\label{eqn: Ua def}
U_a := \bigg(\frac{r+d}{2}\bigg)^s.
\end{equation}
Observe that one can express $U_a$ as
\begin{equation}
\label{eqn: Ua property}
U_a = \frac{|y|^{2s}}{2^{s}(r-d)^s},
\end{equation}
and when $\G$ is flat, that is, $\g \equiv 0$, $U_a$ is equal to
\begin{equation}\label{eqn: Uabar def}
\bar{U}_a:= \bigg(\frac{|z| + d}{2}\bigg)^s.	
\end{equation}

As shown in \cite{CSS}, $\bar{U}_a$ is (up to multiplication by a constant) the only positive $a$-harmonic function vanishing on $\{x_n \leq 0,\,  y=0 \}$.
Thus, if $\tilde{v}$ is a global homogeneous solution of \eqref{eqn: obstacle problem local} with $\vphi \equiv 0$, then, up to a rotation, $\bar{U}_a = \pa_\nu \tilde{v}$, where $\nu$ is the unit normal in $\R^n$ to the free boundary $\G$. 
When $\G$ is not flat, the function $U_a$ is not $a$-harmonic.
However, it approximates $a$-harmonic functions in the sense of the Schauder estimates of Proposition~\ref{prop: pointwise Schauder Cka}.

We work in the coordinate system determined by $x$ and $r$.
For a polynomial 
\[
P = P(x,r) = p_{\mu m} x^{\mu}r^{m},
\]
we let 
\[
\| P \|  := \max \{ |p_{\mu m}| \}.
\]
Here, $\mu$ is a multi-index, $|\mu| = \mu_1 + \cdots + \mu_n$, $\mu_i \geq 0$, and $x^\mu = x_1^{\mu_1}\cdots x_n^{\mu_n}$.
It is useful to think that the coefficients $p_{\mu m}$ are defined for all indices $(\mu,m)$ by extending by zero.
Frequently, we use the convention of summation over repeated indices.

A function $f: B_1 \to \R$ is {\it pointwise $C^{k,\a}_{x,r}$ at $X_0 \in \G$} if there exists a (tangent) polynomial $P(x,r)$ of degree $k$ such that 
\[
f(X) = P(x,r) + O(|X-X_0|^{k+\a}).
\]
We will write $f \in C^{k,\a}_{x,r}(X_0)$, and $\|f\|_{C^{k,\a}_{x,r}(X_0)}$ will denote the smallest constant $M >0$ for which
\[
\|P\| \leq M \qquad\text{and}\qquad |f(X)-P(x,r)| \leq M|X-X_0|^{k+\a}.
\]

We will call objects {\it universal} if they depend only on any or all of $a,n,k$, or $\alpha$.
Throughout, unless otherwise stated, $C$ and $c$ will denote positive universal constants that may change from line to line.

\subsection{Basic Regularity Results for $L_a$}
Let us collect some regularity results for weak solutions $u \in H^1(B_\l, |y|^a)$ of the equation $L_a u = |y|^a f$, beginning with interior regularity. 
Throughout the section, we assume that $0<\l \leq 1.$

As the weight $|y|^a$ is an $A_2$-Muckenhoupt weight, we obtain the following local boundedness property for subsolutions $L_a$ from \cite{FKS}.

\begin{proposition}[Local Boundedness]
\label{prop: harnack}
Let $u \in H^1(B_{\l}, |y|^a)$ be such that $\La u \geq |y|^a f$ in $B_\l$ with $f \in L^{\infty}(B_\l)$.
Then,
\begin{equation}\label{eqn: weak Harnack}
\sup_{B_{\l/2}} u \leq  C\bigg(\frac{1}{\l^{n+1+a}} \int_{B_{\l}} |u|^2 \, |y|^a \, \d X \bigg)^{1/2} +C\l^2 \|f\|_{L^{\infty}(B_\l)}.
\end{equation}
\end{proposition}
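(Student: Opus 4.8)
The statement is the De Giorgi--Nash--Moser local boundedness estimate for subsolutions of the degenerate operator $\La$, and the plan is to prove it by Moser iteration, exploiting that $|y|^a$ is an $A_2$ weight to supply the necessary functional inequalities from \cite{FKS}. First I would reduce to the case $\l = 1$ via the rescaling $u(X) \mapsto u(\l X)$: this replaces $f$ by $\l^2 f(\l\,\cdot)$, which produces the $\l^2\|f\|_{L^\infty}$ term, while $\int_{B_\l}|y|^a\,\d X \simeq \l^{n+1+a}$ converts the prefactor $\l^{-(n+1+a)}$ into a normalization of the weighted $L^2$ mass. It therefore suffices to show
\[
\sup_{B_{1/2}} u \le C\Big(\int_{B_1}|u|^2\,|y|^a\,\d X\Big)^{1/2} + C\|f\|_{L^\infty(B_1)}.
\]

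Two analytic ingredients, both consequences of $|y|^a \in A_2$ and recorded in \cite{FKS}, are used: (i) a weighted Sobolev inequality, i.e., there exist $\chi = \chi(n,a) > 1$ and $C = C(n,a)$ with $\big(\int_{B_1}|\vphi|^{2\chi}|y|^a\,\d X\big)^{1/\chi} \le C\int_{B_1}|\nab\vphi|^2|y|^a\,\d X$ for all $\vphi \in H^1_0(B_1,|y|^a)$; and (ii) the validity of Caccioppoli-type energy estimates in the weighted space. To absorb the inhomogeneity, set $k := \|f\|_{L^\infty(B_1)}$ (if $k=0$, replace it by $\ep>0$ and let $\ep \to 0$ at the end) and $\bar u := u^+ + k$. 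Since $u^+$ is a subsolution with right-hand side $|y|^a f\,\1_{\{u>0\}} \ge -|y|^a k$ and $k \le \bar u$, one has $\La \bar u \ge -|y|^a \bar u$ weakly in $B_1$. Testing this against $\eta^2 \bar u^{2\b-1}$ — legitimate in $H^1(B_1,|y|^a)$ after first truncating $\bar u$ at finite height and passing to the limit — where $\eta$ is a cutoff equal to $1$ on $B_r$, supported in $B_\rho$, with $|\nab\eta|\le 2/(\rho-r)$, and absorbing terms via Young's inequality, yields, for every $\b \ge 1$ and $1/2 \le r < \rho \le 1$,
\[
\int_{B_1}\big|\nab(\eta\bar u^{\b})\big|^2\,|y|^a\,\d X \le \frac{C\b^2}{(\rho-r)^2}\int_{B_\rho}\bar u^{2\b}\,|y|^a\,\d X,
\]
the zeroth-order contribution of $f$ being controlled by the right-hand side because $k \le \bar u$.

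Inserting $\vphi = \eta\bar u^{\b}$ into the Sobolev inequality and combining with the previous display gives the reverse-H\"older bound
\[
\Big(\int_{B_r}\bar u^{2\chi\b}\,|y|^a\,\d X\Big)^{1/(2\chi\b)} \le \Big(\frac{C\b^2}{(\rho-r)^2}\Big)^{1/(2\b)}\Big(\int_{B_\rho}\bar u^{2\b}\,|y|^a\,\d X\Big)^{1/(2\b)}.
\]
Iterating this with $\b = \b_j := \chi^{j}$ and $r = \rho_j := 1/2 + 2^{-j-1}$, and using $\sum_j j\chi^{-j} < \infty$ to bound the resulting infinite product, one obtains $\sup_{B_{1/2}}\bar u \le C\big(\int_{B_1}\bar u^2\,|y|^a\,\d X\big)^{1/2}$. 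Since $\int_{B_1}\bar u^2\,|y|^a\,\d X \le 2\int_{B_1}|u|^2\,|y|^a\,\d X + 2k^2\int_{B_1}|y|^a\,\d X$, this gives $\sup_{B_{1/2}}u \le \sup_{B_{1/2}}\bar u \le C(\int_{B_1}|u|^2\,|y|^a\,\d X)^{1/2} + C\|f\|_{L^\infty(B_1)}$, and undoing the rescaling of the first step proves \eqref{eqn: weak Harnack}.

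The only slightly delicate points are the justification that the test functions $\eta^2\bar u^{2\b-1}$ lie in $H^1(B_1,|y|^a)$ — handled by truncating $\bar u$ at level $N$, testing, and sending $N \to \infty$ through monotone/dominated convergence — and the bookkeeping that recasts the inhomogeneity $|y|^a f$ as a zeroth-order term absorbable by the leading term through the shift $\bar u = u^+ + \|f\|_{L^\infty}$. No additional obstacle arises from the degeneracy of $|y|^a$: all the functional inequalities invoked are precisely the ones \cite{FKS} establishes for general $A_2$ weights.
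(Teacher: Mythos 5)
Your argument is correct, but it is genuinely different from the one in the paper. The paper's proof is a two-line reduction: since $L_a(|X|^2) = 2(n+1+a)|y|^a$, the function $w := u + c\,|X|^2\,\|f\|_{L^\infty(B_\l)}$ with $c = (2(n+1+a))^{-1}$ satisfies $L_a w \ge 0$, so one may directly invoke the local boundedness theorem for subsolutions of the \emph{homogeneous} degenerate equation (\cite[Theorem 2.3.1]{FKS}, with details as in \cite[Lemma 3.4]{BFRO}); the quadratic correction is what produces the $C\l^2\|f\|_{L^\infty}$ term. You instead re-run the entire De Giorgi--Nash--Moser machinery from scratch, absorbing the inhomogeneity through the classical shift $\bar u = u^+ + \|f\|_{L^\infty}$ and iterating a weighted Caccioppoli--Sobolev reverse H\"older inequality. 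Both routes are sound: your rescaling to $\l = 1$ correctly accounts for the factors $\l^2$ and $\l^{-(n+1+a)}$ (note $L_a$ scales as $\l^{a-2}$ under $X \mapsto \l X$ and $\int_{B_\l}|y|^a \simeq \l^{n+1+a}$), the weighted Sobolev inequality with gain $\chi>1$ is indeed available for $A_2$ weights in \cite{FKS}, and the truncation argument legitimizing the test functions $\eta^2\bar u^{2\b-1}$ is standard. What the paper's approach buys is brevity — it outsources all iteration to the cited theorem and only needs to notice that a paraboloid is an explicit supersolution/subsolution corrector for $L_a$. What your approach buys is self-containedness and slightly more generality: the same iteration handles right-hand sides in $L^p$ for suitable $p$ rather than only $L^\infty$, and it makes transparent exactly which properties of the weight ($A_2$, hence doubling plus weighted Sobolev and Poincar\'e) are being used.
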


\begin{proof}
The inequality follows from \cite[Theorem 2.3.1]{FKS} applied to $u = c(\l^2-|X|^2)$ and $u + c|X|^2$; see \cite[Lemma 3.4]{BFRO} for details.\footnote{\,We caution the reader that the authors in \cite{BFRO} define $\La$ with the opposite sign convention, that is, they consider $\La u(x,y) := -\DIV(|y|^a \nab u(x,y))$.}
\end{proof}	

The operator $L_a$ also enjoys a Harnack inequality \cite[Lemma 2.3.5]{FKS} and a boundary Harnack inequality \cite[p. 585]{FKJ2}. 
By a standard argument (see, for instance, \cite[Theorem 8.22]{GT}), the Harnack inequality implies that solutions of $L_a u = |y|^a f$ for bounded $f$ are H\"{o}lder continuous.

\begin{proposition}[H\"{o}lder Continuity]
\label{cor: holder}
Let $u \in H^1(B_{\lambda}, |y|^a)$ be such that $\La u = |y|^a f$ in $B_\l$ with $f \in L^{\infty}(B_\l)$. 
Then, 
\[
\|u\|_{C^{0,\a}(B_{\l/2})} \leq  C\l^{-\a}\|u\|_{L^\infty(B_{\l})} +C\l^{2-\a} \|f\|_{L^\infty(B_\l)}
\]
for some $\a \in (0,1)$.
\end{proposition}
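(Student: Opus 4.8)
The plan is to derive H\"{o}lder continuity from the Harnack inequality for $L_a$ by a standard oscillation-decay argument, treating the inhomogeneity $|y|^a f$ as a perturbation. First I would reduce to the homogeneous case by a subtraction: since $f \in L^\infty(B_\l)$, the function $w(X) := u(X) + \tfrac{1}{2(n+1+a)}\|f\|_{L^\infty(B_\l)}|X|^2$ satisfies $\La w = |y|^a(f + \|f\|_{L^\infty}) \geq 0$ and $\La w \leq |y|^a \cdot 2\|f\|_{L^\infty}$ up to constants (one checks $\La |X|^2 = 2(n+1+a)|y|^a$ directly, using $\DIV(|y|^a \nab |X|^2) = \DIV(2|y|^a X) = 2(n+1)|y|^a + 2a|y|^a = 2(n+1+a)|y|^a$), so that both $w$ and a companion function built from $-u$ plus a quadratic are subsolutions controlled in $L^\infty$ by $\|u\|_{L^\infty(B_\l)} + \l^2\|f\|_{L^\infty(B_\l)}$. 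The point is that after this reduction, the oscillation of $u$ on a ball $B_\rho$ is comparable, up to an additive $C\rho^2\|f\|_{L^\infty}$ term, to the oscillation of functions to which the Harnack inequality \cite[Lemma 2.3.5]{FKS} applies.

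The core step is the oscillation decay. Fix $\rho \leq \l$ and let $M_\rho = \sup_{B_\rho} u$, $m_\rho = \inf_{B_\rho} u$. Applying the Harnack inequality of \cite[Lemma 2.3.5]{FKS} to the nonnegative subsolution-adjusted functions $M_\rho - u + C\rho^2\|f\|_{L^\infty}$ and $u - m_\rho + C\rho^2\|f\|_{L^\infty}$ on $B_\rho$ yields
\[
\sup_{B_{\rho/2}}(M_\rho - u) \leq C\Big(\inf_{B_{\rho/2}}(M_\rho - u) + \rho^2\|f\|_{L^\infty}\Big), \qquad
\sup_{B_{\rho/2}}(u - m_\rho) \leq C\Big(\inf_{B_{\rho/2}}(u - m_\rho) + \rho^2\|f\|_{L^\infty}\Big).
\]
Adding these and writing $\osc_{B_{\rho/2}} u = M_{\rho/2} - m_{\rho/2}$, one obtains $\osc_{B_{\rho/2}} u \leq \gamma \, \osc_{B_\rho} u + C\rho^2\|f\|_{L^\infty}$ for some $\gamma = \gamma(a,n) \in (0,1)$. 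This is precisely the hypothesis of the standard iteration lemma (see \cite[Lemma 8.23]{GT}, or the elementary lemma on geometrically decaying sequences), which upgrades the one-step contraction into the bound $\osc_{B_\rho} u \leq C(\rho/\l)^\a\big(\|u\|_{L^\infty(B_\l)} + \l^2\|f\|_{L^\infty(B_\l)}\big)$ for all $\rho \leq \l/2$, where $\a \in (0,1)$ is determined by $\gamma$ (roughly $\a = \min\{\log_2(1/\gamma), 1\}$, shrunk slightly so the quadratic error term is absorbed).

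From the oscillation estimate, H\"{o}lder continuity at an arbitrary point follows in the usual way: for $X_1, X_2 \in B_{\l/2}$ with $|X_1 - X_2| = \rho$, apply the oscillation bound on the ball $B_\rho(X_1) \subset B_\l$ to get $|u(X_1) - u(X_2)| \leq \osc_{B_\rho(X_1)} u \leq C\l^{-\a}\rho^\a(\|u\|_{L^\infty(B_\l)} + \l^2\|f\|_{L^\infty(B_\l)})$, which together with the trivial $L^\infty$ bound gives the claimed
\[
\|u\|_{C^{0,\a}(B_{\l/2})} \leq C\l^{-\a}\|u\|_{L^\infty(B_\l)} + C\l^{2-\a}\|f\|_{L^\infty(B_\l)}.
\]
The scaling in $\l$ is tracked by the rescaling $u_\l(X) := u(\l X)$, which solves $\La u_\l = |y|^a \l^2 f(\l X)$ on $B_1$, so it suffices to prove the $\l = 1$ case and then rescale. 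The main obstacle is not conceptual but bookkeeping: one must be careful that the weighted Harnack inequality from \cite{FKS} is genuinely scale-invariant (it is, because $|y|^a$ is homogeneous and $A_2$) and that the additive inhomogeneity terms are propagated correctly through the iteration so that the final exponent $\a$ and the power $\l^{2-\a}$ on $\|f\|_{L^\infty}$ come out right; the quadratic gain $\rho^2$ in the error is more than enough to be absorbed by any $\rho^\a$ with $\a < 1$, which is why no sharpness is claimed for $\a$.
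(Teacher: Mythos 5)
Your proposal is correct and is essentially the paper's own argument: the paper gives no detailed proof, simply noting that the Harnack inequality of \cite[Lemma 2.3.5]{FKS} yields H\"older continuity via the standard oscillation-decay iteration of \cite[Theorem 8.22]{GT}, which is precisely what you have written out (including the quadratic correction for the inhomogeneity and the rescaling to track the powers of $\l$).
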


If $u$ is such that $L_a u = 0$, then $\La (\pa_i u) = 0$ for $i = 1,\dots,n$.
Moreover, as pointed out in \cite{CS}, the function $|y|^a\pa_y u$ satisfies $L_{-a} (|y|^a\pa_y u) = 0$.
If instead $L_a u = |y|^a f$ for $f$ bounded, then one can show that
\[
\La (\pa_i u) = |y|^a\pa_i f \qquad\forall i = 1,\dots,n \qquad \text{ and } \qquad L_{-a} (|y|^a\pa_y u) = \pa_y f.\]
Here, the partial derivatives of $f$ are understood in the distributional sense.
And so, we have the following regularity result for $\nab_x u$ and $|y|^a \pa_y u$.

\begin{proposition}[Interior Regularity of $\nab_x u$ and $|y|^a \pa_y u$]
\label{prop: derivative estimates}
Let $u \in H^1(B_{\l}, |y|^a)$ be such that $\La u = |y|^a f$ in $B_\l$ with $f \in L^{\infty}(B_\l)$.
Then,
\[
\|\na_x u\|_{L^\infty(B_{\l/4})} \leq C\l^{-1}\|u\|_{L^\infty(B_\l)} +C \|f\|_{L^\infty(B_\l)}.
\]
Furthermore, if $g:=|y|^a\pa_y f \in L^\infty(B_\l)$, then 
\[
\| |y|^a \pa_y u \|_{L^\infty(B_{\l/4})} \leq C \l^{a-1}\| u\|_{L^\infty(B_{\l})} +C  \l^{a}\|f\|_{L^\infty(B_\l)}+ C\l^2\| g\|_{L^\infty(B_{\l})}.
\]
\end{proposition}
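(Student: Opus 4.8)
The plan is to reduce everything to the interior regularity result of Proposition~\ref{prop: derivative estimates} applied to auxiliary equations, by exploiting the differentiation identities recorded just before the statement. First I would establish the bound on $\|\na_x u\|_{L^\infty}$. Since $\La(\pa_i u) = |y|^a \pa_i f$ for $i = 1,\dots,n$, each $\pa_i u$ is itself a weak solution of an equation of the same form, but with a distributional right-hand side $\pa_i f$ rather than a bounded one, so the local boundedness estimate of Proposition~\ref{prop: harnack} cannot be applied directly to $\pa_i u$. The standard way around this is a Caccioppoli/cutoff argument: multiply $\La u = |y|^a f$ by $\eta^2 \pa_i u$ for a cutoff $\eta$, integrate by parts (moving the derivative off $f$ and onto the cutoff $\eta^2 \pa_i u$ so that only $f$ itself, not $\pa_i f$, appears), and obtain an $L^2(|y|^a)$-bound on $\na_x(\pa_i u)$ on a slightly smaller ball. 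Iterating this over a chain of shrinking balls (a standard Moser-type iteration, which works because $|y|^a$ is an $A_2$ weight and one has the Sobolev inequality of \cite{FKS}) upgrades the $L^2$ control of $\pa_i u$ to the $L^\infty$ bound claimed, with the $\l^{-1}\|u\|_{L^\infty}$ and $\|f\|_{L^\infty}$ scaling dictated by the homogeneities. Alternatively, and perhaps more cleanly, one notes that $\na_x u$ solves $\La(\na_x u) = |y|^a \na_x f$ and applies the known interior H\"older estimate (Proposition~\ref{cor: holder}) to $u$ itself to control $u - u(X_0)$, then uses interior gradient estimates for the homogeneous operator $\La$ on the difference, absorbing the contribution of $f$ by comparison with the particular solution $c|X|^2$ as in Proposition~\ref{prop: harnack}; the scaling is then checked by the rescaling $u_\l(X) = u(\l X)$, which transforms $\La u = |y|^a f$ into $\La u_\l = |y|^a (\l^2 f(\l \cdot))$.

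For the second estimate, the key observation, due to \cite{CS}, is the conjugate-equation identity $L_{-a}(|y|^a \pa_y u) = \pa_y f$ upgraded, when $g := |y|^a \pa_y f \in L^\infty$, to $L_{-a}(|y|^a \pa_y u) = |y|^{-a} g$ (this is the natural form of the right-hand side for the operator $L_{-a}$, since its weight is $|y|^{-a}$). Thus $w := |y|^a \pa_y u$ is a weak solution of an equation $L_{-a} w = |y|^{-a} g$ with $g$ bounded, and $-a \in (-1,1)$, so Proposition~\ref{prop: harnack} applies verbatim with $a$ replaced by $-a$. This yields
\[
\|w\|_{L^\infty(B_{\l/2})} \leq C \left( \frac{1}{\l^{n+1-a}} \int_{B_\l} |w|^2 \, |y|^{-a} \, \d X \right)^{1/2} + C\l^2 \|g\|_{L^\infty(B_\l)}.
\]
It then remains to control $\int_{B_\l} |w|^2 |y|^{-a} \, \d X = \int_{B_\l} (\pa_y u)^2 |y|^a \, \d X$, the $|y|^a$-weighted Dirichlet energy of $u$ in the $y$-variable, by $\|u\|_{L^\infty(B_\l)}$ and $\|f\|_{L^\infty(B_\l)}$. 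This is a Caccioppoli inequality for the original equation $\La u = |y|^a f$: testing against $\eta^2 u$ gives $\int \eta^2 |\na u|^2 |y|^a \leq C\int |\na \eta|^2 u^2 |y|^a + C \int \eta^2 |u| |f| |y|^a$, which on $B_{3\l/4}$ is bounded by $C\l^{-2}\l^{n+1+a}\|u\|_{L^\infty}^2 + C\l^{n+1+a}\|u\|_{L^\infty}\|f\|_{L^\infty}$. Feeding this back in and tracking the powers of $\l$ (using $2ab \le a^2 + b^2$ to split the mixed term) produces exactly the claimed $\l^{a-1}\|u\|_{L^\infty}$, $\l^a \|f\|_{L^\infty}$, and $\l^2 \|g\|_{L^\infty}$ weights. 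Applying this to balls centered at a generic point of $B_{\l/4}$ and with radius comparable to $\l/4$ gives the $L^\infty$ bound on $B_{\l/4}$.

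The main obstacle, and the point requiring the most care, is the low regularity of the right-hand sides obtained after differentiation: $\pa_i f$ and $\pa_y f$ are only distributions, so none of the cited estimates (Propositions~\ref{prop: harnack}, \ref{cor: holder}) apply to the differentiated equations as they stand. The resolution in both parts is the same in spirit — never test against a derivative of $f$, always integrate by parts to land a plain $f$ (respectively $g = |y|^a \pa_y f$) against a smooth, compactly supported weight — but making this rigorous requires a careful choice of test functions in the weak formulation and attention to whether the resulting integrals are finite near $\{y = 0\}$, which is exactly where the $A_2$ property of $|y|^a$ (and of $|y|^{-a}$, since $a \in (-1,1)$) and the corresponding weighted Sobolev and Poincar\'e inequalities from \cite{FKS} are needed. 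A secondary, purely bookkeeping, obstacle is keeping the powers of $\l$ correct through the rescaling and the Caccioppoli steps; this is routine once one fixes the scaling convention $u_\l(X) = u(\l X)$ and notes $\La u_\l = |y|^a \l^2 f(\l \cdot)$ and $|y|^a \pa_y u_\l = \l (|y|^a \pa_y u)(\l \cdot)$.
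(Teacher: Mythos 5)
Your proposal is correct and follows essentially the same route as the paper: the paper handles the first bound by citing the local boundedness of $\na_x u$ from \cite{FKS} (Theorems 2.3.1 and 2.3.14) — whose proof is exactly the difference-quotient/Caccioppoli–Moser argument you sketch, with $f$ rather than $\pa_i f$ appearing after integration by parts — combined with the energy inequality for $u$. Your treatment of $w=|y|^a\pa_y u$ via $L_{-a}w=|y|^{-a}g$, Proposition~\ref{prop: harnack} with $a$ replaced by $-a$, the identity $\int |w|^2|y|^{-a}=\int|\pa_y u|^2|y|^{a}$, and the Caccioppoli inequality is identical to the paper's, including the bookkeeping of the powers of $\l$.
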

In fact, $\pa_i u$ and $|y|^a \pa_y u $ are H\"{o}lder continuous, but boundedness is all we need.

\begin{proof}
%The H\"{o}lder continuity of $\nab_x u$ is established by \cite[Theorem~2.3.15]{FKS}.
%In particular,
%\begin{equation}\label{eqn: deriv holder}
%\|\na_x u\|_{C^{0,\a}(B_{\l/2})} \leq  C\|\na_x u\|_{L^\infty(B_\l)} +C\l^2 \|f\|_{L^\infty(B_\l)}.
%\end{equation}
By \cite[Theorems 2.3.1 and 2.3.14]{FKS}, $\na_x u$ has the following local boundedness property:
\[
\sup_{B_{\l/4}} |\na_x u| \leq 
C\bigg(\frac{1}{\l^{n+1+a}} \int_{B_{\l/2}} |\na_x u|^2 \, |y|^a \, \d X \bigg)^{1/2}+C\l^\delta \| f\|_{L^\infty(B_\l/2)} 
\]
for some $\delta>0$.
Using the energy inequality
\begin{equation}\label{eqn: energy inequality}
\int_{B_{\l/2}} |\na u|^2|y|^a \, \d X  \leq C \int_{B_\l} |f|^2|y|^a \, \d X + \frac{C}{\l^2} \int_{B_\l} |u|^2 |y|^a \, \d X
\end{equation}
for $u$ and recalling $\l\leq 1$, the first estimate follows.
%\[
%\sup_{B_{\l/2}} |\na_x u| \leq
% C \| f\|_{L^\infty(B_\l)}  + \frac{1}{\l} \| u\|_{L^\infty(B_\l)}.
% \] 
 
Let $w:= |y|^a \pa_y u$ and note that $L_{-a} w = |y|^{-a} g$. Since $g\in L^\infty(B_\l),$ \eqref{eqn: weak Harnack} implies that
\[
\begin{split}
\|w\|_{L^\infty(B_{\l/4})}  &\leq C\bigg( \frac{1}{\l^{n+1-a}}\int_{B_{\l/2} } |w|^2 |y|^{-a} \, \d X \bigg)^{1/2} + C\l^2\|g \|_{L^\infty(B_{\l/2})} \\
&\leq C\bigg( \frac{1}{\l^{n+1-a}}\int_{B_{\l/2} } |\na u|^2 |y|^{a} \, \d X \bigg)^{1/2} + C\l^2\|g \|_{L^\infty(B_{\l/2})}.
\end{split}
\] 
Applying \eqref{eqn: energy inequality} once more concludes the proof.
%we conclude that
%\[
%\| w \|_{L^\infty(B_{\l/4})} \leq C \l^{a-1}\| u\|_{L^\infty(B_{\l})} +C  \l^{a}\|f\|_{L^\infty(B_\l)}+ C\l^2\| g\|_{L^\infty(B_{\l/2})} .
% \]
\end{proof}

The following boundary regularity result is a consequence of Proposition~\ref{prop: derivative estimates} applied to odd reflections.
More specifically, let $B_\l^+ := B_\l \cap \{ y > 0 \}$ and $u \in H^1(B_\l^+, y^a)$ be such that
\begin{equation}
\label{eqn: Half Ball Dirichlet}
\begin{cases}
L_a u = y^a f &\text{in } B_\l^+\\
u=0 &\text{on } \{y=0\},
\end{cases} 
\end{equation}
and let $\bar u$ and $\bar f$ be the odd extensions across $\{ y=0\}$ of $u$ and $f$ respectively. 
Then, notice that $L_a \bar u = |y|^a \bar f$ in $ B_\l$. Applying Proposition~\ref{prop: derivative estimates} to $\bar u$ yields the following.

\begin{corollary}[Boundary Regularity for $\nab_x u$ and $y^a \pa_y u$]
\label{cor: basic bd reg}
Suppose $u\in H^1(B_\l^+, y^a)$ satisfies \eqref{eqn: Half Ball Dirichlet} where $f \in C(\overline{B}_\l^+)$.
Then,
\[
\|\na_x u\|_{L^\infty(B^+_{\l/4})} \leq C\l^{-1}\|u\|_{L^\infty(B_\l^+)} +C \|f\|_{L^\infty(B_\l^+)} .
\]
Furthermore, if $f$ vanishes on $\{y=0\}$ and $y^a\pa_y f \in L^\infty(B^+_\l)$, then letting $g := y^a \pa_y f$, 
\[
\| y^a \pa_y u \|_{L^\infty(B^+_{\l/4})} \leq C \l^{a-1}\| u\|_{L^\infty(B_{\l}^+)} +C  \l^{a}\|f\|_{L^\infty(B_\l^+)}+ C\l^2\| g\|_{L^\infty(B_{\l}^+)}.
\]
\end{corollary}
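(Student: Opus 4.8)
The plan is to reduce everything to the interior estimates of Proposition~\ref{prop: derivative estimates} by passing to odd reflections across $\{y=0\}$, exactly as indicated in the paragraph preceding the statement. First I would define $\bar u$ and $\bar f$ to be the odd extensions of $u$ and $f$ from $B_\l^+$ to $B_\l$, i.e. $\bar u(x,-y) = -\bar u(x,y)$ and likewise for $\bar f$. The first point to check carefully is that $\bar u \in H^1(B_\l, |y|^a)$ and that $L_a \bar u = |y|^a \bar f$ holds weakly in the \emph{full} ball $B_\l$ — not just in $B_\l^+$ and $B_\l^-$ separately. This is where the Dirichlet condition $u = 0$ on $\{y=0\}$ is essential: it guarantees that the odd reflection does not create a singular contribution (a distributional mass) on the hyperplane $\{y=0\}$ when one integrates against a test function. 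Concretely, for $\phi \in C_c^\infty(B_\l)$ one splits $\int_{B_\l} |y|^a \nabla \bar u \cdot \nabla \phi$ into the $y>0$ and $y<0$ pieces, uses the oddness of $\bar u$ together with the change of variables $y \mapsto -y$ to rewrite the $y<0$ integral, and checks that the boundary terms on $\{y=0\}$ arising from integration by parts vanish because $u$ vanishes there; the weight $|y|^a$ with $a \in (-1,1)$ is integrable near $\{y=0\}$, so all integrals make sense. One then reads off $L_a \bar u = |y|^a \bar f$ in $B_\l$.

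Once this reflection step is justified, the first estimate is immediate: apply the first inequality of Proposition~\ref{prop: derivative estimates} to $\bar u$ on $B_\l$, giving
\[
\|\nabla_x \bar u\|_{L^\infty(B_{\l/4})} \leq C\l^{-1}\|\bar u\|_{L^\infty(B_\l)} + C\|\bar f\|_{L^\infty(B_\l)},
\]
and then restrict to $B_{\l/4}^+$ and note $\|\bar u\|_{L^\infty(B_\l)} = \|u\|_{L^\infty(B_\l^+)}$, $\|\bar f\|_{L^\infty(B_\l)} = \|f\|_{L^\infty(B_\l^+)}$ by oddness (absolute values are symmetric). For the second estimate, I would apply the second inequality of Proposition~\ref{prop: derivative estimates} to $\bar u$: this requires $g := |y|^a \partial_y \bar f \in L^\infty(B_\l)$. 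Here the hypothesis that $f$ vanishes on $\{y = 0\}$ plays the analogous role for $\bar f$ that the vanishing of $u$ played for $\bar u$ — it ensures $\partial_y \bar f$ has no singular part on $\{y=0\}$, so $\bar g := |y|^a \partial_y \bar f$ is the \emph{even} extension of $y^a \partial_y f$ and hence lies in $L^\infty(B_\l)$ with $\|\bar g\|_{L^\infty(B_\l)} = \|g\|_{L^\infty(B_\l^+)}$. Proposition~\ref{prop: derivative estimates} then gives
\[
\||y|^a \partial_y \bar u\|_{L^\infty(B_{\l/4})} \leq C\l^{a-1}\|\bar u\|_{L^\infty(B_\l)} + C\l^a \|\bar f\|_{L^\infty(B_\l)} + C\l^2\|\bar g\|_{L^\infty(B_\l)},
\]
and restricting to $B_{\l/4}^+$ and translating norms back via oddness yields the claimed bound.

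The main obstacle, and the only genuinely nontrivial point, is the verification that the odd reflection preserves the equation across $\{y=0\}$ in the weak sense with the degenerate weight $|y|^a$ — that is, that no distributional term concentrated on $\{y=0\}$ is produced. This is precisely the step where the boundary conditions ($u=0$ and $f=0$ on $\{y=0\}$) are used, and it is the fractional-Laplacian analogue of the classical fact that odd reflection of a harmonic function with zero boundary data is harmonic. It is worth noting that $\nabla_x$ commutes with the reflection (so $\nabla_x \bar u$ is genuinely the odd extension of $\nabla_x u$), whereas $\partial_y$ anticommutes with it, which is why the object that reflects nicely in the $y$-direction is the weighted normal derivative $|y|^a \partial_y u$ rather than $\partial_y u$ alone — consistent with the remark in the excerpt that $|y|^a \partial_y u$ solves an $L_{-a}$ equation. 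Everything else is bookkeeping with the norms and the elementary observation that taking absolute values commutes with odd reflection.
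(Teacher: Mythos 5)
Your proposal is correct and follows exactly the paper's argument: the paper proves this corollary precisely by taking odd reflections $\bar u$, $\bar f$ across $\{y=0\}$, observing that $L_a\bar u = |y|^a\bar f$ in the full ball $B_\l$ (with the vanishing of $u$, resp.\ $f$, on $\{y=0\}$ preventing any distributional contribution on the hyperplane), and then applying Proposition~\ref{prop: derivative estimates} to $\bar u$. Your additional care with the weak formulation and with $|y|^a\pa_y\bar f$ being the even extension of $g$ is a correct elaboration of the step the paper leaves implicit.
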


If $f$ does not vanish on $\{ y = 0\}$ yet depends only on $x$, then we have the following.

\begin{corollary}[Boundary Regularity of $y^a \pa_y u$]
\label{cor: y bd reg 2}
Suppose $u\in H^1(B_\l^+, y^a)$ satisfies \eqref{eqn: Half Ball Dirichlet}
where $f = f(x)$ and $f \in L^\infty(B_\l^+)$.
% and $y^a\pa_y f \in L^\infty(B_\l^+)$. 
Then, 
%letting $g:= y^a \pa_y f$, 
\[
\| y^a \pa_y u \|_{L^\infty(B^+_{\l/4})} \leq C \l^{a-1}\| u\|_{L^\infty(B^+_{\l})} +C\l^{a} \|f\|_{L^\infty(B^+_\l)}.
%+ C\l^2\| g\|_{L^\infty(B^+_{\l/2})} .
\]
\end{corollary}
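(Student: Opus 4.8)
The goal is to estimate $\|y^a \partial_y u\|_{L^\infty(B^+_{\lambda/4})}$ when $u$ solves \eqref{eqn: Half Ball Dirichlet} with a right-hand side $f = f(x)$ that need not vanish on $\{y=0\}$, so the odd-reflection trick of Corollary~\ref{cor: basic bd reg} does not directly apply to $\partial_y f$. The plan is to subtract off an explicit particular solution that absorbs the non-vanishing trace of $f$, reducing to a situation where Corollary~\ref{cor: basic bd reg} (or Corollary~\ref{cor: y bd reg 2}'s hypotheses after modification) does apply.

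\medskip

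\textbf{Step 1: construct a barrier/particular solution depending only on $y$.} Since $f = f(x)$ has no $y$-dependence, I would first look for a function $h = h(y)$ with $L_a h = y^a$, i.e. $(y^a h'(y))' = y^a$. Integrating gives $y^a h'(y) = \tfrac{y^{a+1}}{a+1} + c$, and choosing $c=0$ and integrating once more yields $h(y) = \tfrac{y^2}{2(a+1)}$ (valid since $a > -1$), which vanishes at $y=0$. Then $w := u - f(x)\, h(y)$ would satisfy $L_a w = y^a f - L_a(f(x)h(y))$; the trouble is that $L_a(f(x)h(y)) = h(y)\Delta_x f \cdot y^a + f(x)\,(y^a h')' = y^a\big(h(y)\Delta_x f + f(x)\big)$, which requires controlling $\Delta_x f$ — information we do not have. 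So instead I would keep $f$ fixed and use $h(y)$ only where it helps: set $w := u - f(x)\,\tfrac{y^2}{2(a+1)}$ is wrong in general, so I pivot.

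\medskip

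\textbf{Step 1$'$ (the right reduction): freeze $f$ at the boundary and use the flat profile.} Rather than subtract $f(x)h(y)$, I would prove the estimate by a scaling-and-compactness or direct integral argument. The cleanest route: observe that $v := y^a \partial_y u$ satisfies $L_{-a} v = \partial_y f = 0$ in the distributional sense \emph{in the interior} $B_\lambda^+$, because $f$ is independent of $y$; moreover the even reflection $\bar v$ of $v$ across $\{y=0\}$ satisfies $L_{-a}\bar v = 0$ in all of $B_\lambda$ \emph{plus} possibly a measure supported on $\{y=0\}$ coming from the non-vanishing boundary condition. The key computation is to identify that measure: integrating $L_a u = y^a f$ against a test function and using $u|_{\{y=0\}}=0$, one finds $\lim_{y\to 0^+} y^a \partial_y u(x,y) = 0$ is \emph{not} forced here — rather it equals something proportional to $f(x)$? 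No: in fact for \eqref{eqn: Half Ball Dirichlet}, testing shows $-\lim_{y\to 0^+} y^a\partial_y u = 0$ since the Dirichlet datum is $u=0$, not a Neumann condition. Hence $\bar v$ has no singular part and $L_{-a}\bar v = 0$ across $\{y=0\}$. Then Proposition~\ref{prop: harnack} (local boundedness for $L_{-a}$) applied to $\bar v$ gives
\[
\|v\|_{L^\infty(B^+_{\lambda/4})} \le C\Big(\lambda^{-(n+1-a)}\int_{B_{\lambda/2}} |v|^2\,|y|^{-a}\,\d X\Big)^{1/2} = C\Big(\lambda^{-(n+1-a)}\int_{B_{\lambda/2}} |\nabla u|^2\,|y|^{a}\,\d X\Big)^{1/2},
\]
and then the energy inequality \eqref{eqn: energy inequality} (applied on the half-ball to $u$, using the homogeneous Dirichlet condition to justify it up to the boundary) bounds the right side by $C\lambda^{a-1}\|u\|_{L^\infty(B_\lambda^+)} + C\lambda^a\|f\|_{L^\infty(B_\lambda^+)}$, which is exactly the claimed inequality.

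\medskip

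\textbf{Main obstacle.} The delicate point is justifying that the even reflection $\bar v = \overline{|y|^a\partial_y u}$ solves $L_{-a}\bar v = 0$ \emph{across} the slit with no distributional contribution on $\{y=0\}$ — equivalently, that the homogeneous Dirichlet condition $u=0$ on $\{y=0\}$ forces the co-normal flux $\lim_{y\to 0^+} y^a\partial_y u$ to vanish in the appropriate weak sense, even though $f$ itself does not vanish there. One must check that $u$ is regular enough near $\{y=0\}$ for this trace to make sense: this follows from writing $u$ via its odd-in-$y$ extension $\bar u$ (legitimate since $u=0$ on $\{y=0\}$) solving $L_a \bar u = |y|^a \bar f$ with $\bar f$ the \emph{odd} extension of $f$ — but $\bar f$ is discontinuous, hence only $L^\infty$, which is still enough for Proposition~\ref{cor: holder} and for the first estimate of Proposition~\ref{prop: derivative estimates} to give $\nabla_x \bar u \in L^\infty$, but \emph{not} enough for the second estimate there (which needs $|y|^a\partial_y \bar f \in L^\infty$). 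This is precisely why a separate corollary is needed. The resolution is that $\bar f$ odd means $\bar f(x,0^+) = -\bar f(x,0^-)$, so while $\bar f$ jumps, the function $y^a\partial_y \bar u$ is still a $W^{1,1}_{loc}$-type object whose even reflection has $L_{-a}$-flux across $\{y=0\}$ proportional to the jump of $\bar f$... which is nonzero. So the cleanest honest proof subtracts the explicit one-variable solution of $(y^a g')' = y^a$, namely $g(y) = y^2/(2(a+1))$, from $u$ after first replacing $f(x)$ by $f(x) - f(x) = 0$... I would ultimately settle on: let $\phi(x,y) := f(x)\,\frac{y^2}{2(a+1)}$, note $L_a\phi = y^a f(x) + \frac{y^2}{2(a+1)}y^a\Delta_x f$, so $L_a(u-\phi) = -\frac{y^2}{2(a+1)}y^a\Delta_x f$; this still has $\Delta_x f$. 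Given that all genuinely clean routes stumble on $\Delta_x f$, the \emph{actual} mechanism must be the compactness/energy argument of Step~1$'$ combined with a careful weak formulation showing the co-normal derivative of $u$ vanishes on $\{y=0\}$ because the Dirichlet trace does — I expect the proof to test the weak equation for $u$ against functions supported near $\{y=0\}$, integrate by parts, and read off that $\int y^a\partial_y u\,\partial_y\psi = \int_{B^+} y^a f\,\psi - \int y^a\nabla_x u\cdot\nabla_x\psi$ with no boundary term, giving $L_{-a}(\overline{y^a\partial_y u}) = 0$ across $\{y=0\}$ in the distributional sense, after which Proposition~\ref{prop: harnack} and \eqref{eqn: energy inequality} finish as above.
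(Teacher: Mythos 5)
Your final argument hinges on the claim that, because $u$ has zero Dirichlet trace on $\{y=0\}$, the conormal flux $\lim_{y\to 0^+} y^a\partial_y u$ vanishes and hence the even reflection $\bar v$ of $v:=y^a\partial_y u$ satisfies $L_{-a}\bar v=0$ across the hyperplane. This is false, and it is exactly the point this corollary exists to handle. In $\{y>0\}$ the equation $L_a u = y^a f$ reads $\partial_y v = y^a\big(f-\Delta_x u\big)$, so $y^{-a}\partial_y v \to f(x) - \Delta_x u(x,0) = f(x)$ as $y\to 0^+$ (using $u(\cdot,0)=0$). By evenness of $\bar v$ the one-sided conormal derivatives of $\bar v$ do not match: the jump is $2f$, so
\[
L_{-a}\bar v \;=\; 2f\,\mathcal{H}^{n}\llcorner\{y=0\},
\]
which is nonzero whenever $f$ is. (A Dirichlet condition does not force the Neumann/conormal datum to vanish; that would overdetermine the problem.) You in fact notice this in your ``main obstacle'' paragraph --- ``the $L_{-a}$-flux across $\{y=0\}$ \ldots is nonzero'' --- but then revert to asserting the flux is zero, and your closing integration-by-parts identity is merely the weak form of the equation for $u$, not a derivation of the equation satisfied by $\bar v$. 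With a signed measure on the right-hand side, $\bar v$ is neither a sub- nor a supersolution, so Proposition~\ref{prop: harnack} cannot be applied to it directly.

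The missing idea, which is the paper's proof, is an explicit barrier that cancels the surface measure: with $M:=\|f\|_{L^\infty(B_\l^+)}$, the function $\frac{M}{1+a}|y|^{a+1}$ satisfies $L_{-a}\big(\tfrac{M}{1+a}|y|^{a+1}\big) = 2M\,\mathcal{H}^n\llcorner\{y=0\}$, so $v_\pm := \bar v \pm \frac{M}{1+a}|y|^{a+1}$ satisfy $L_{-a}v_+\ge 0$ and $L_{-a}v_-\le 0$. One then applies the local boundedness estimate of Proposition~\ref{prop: harnack} to the subsolutions $v_+$ and $-v_-$, together with the energy inequality \eqref{eqn: energy inequality}, and concludes from $v_-\le \bar v\le v_+$; the added term $\frac{M}{1+a}|y|^{a+1}\le C\l^{a+1}M\le C\l^a M$ is absorbed into the right-hand side. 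Your quantitative bookkeeping (local boundedness plus energy inequality) is the right endgame, but without the barrier step the argument does not go through.
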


\begin{proof}
Letting $w:= |y|^a \pa_y \bar u$, where again $\bar u$ is the odd extension of $u$ across $\{y=0\}$, we have
\[ 
L_{-a}w = 2f \mathcal{H}^{n}\llcorner\{y=0\}.
\]
Let $M:= \| f\|_{L^\infty(B_\l^+)}$ and consider the barriers
\[ 
v_{\pm} := w \pm \frac{M}{1+a}|y|^{a+1},
\]
which satisfy 
\[
L_{-a} v_{\pm} = ( 2f \pm 2M) \mathcal{H}^{n} \llcorner\{ y =0 \} .
\]
Therefore, $L_{-a} v_+ \geq 0 $ and $ L_{-a} v_- \leq 0$. 
Applying Proposition~\ref{prop: harnack} and arguing as in the proof of Proposition~\ref{prop: derivative estimates}, we see that
\[
\sup_{B_{\l /4}} v_+ \leq C\l^{a-1}\| u\|_{L^\infty(B_\l)}
+ C \l^a \| f\|_{L^\infty(B_\l)}.
\]
The same bound holds for $\sup_{B_{\l/4}} -v_-$. 
As $v_- \leq w \leq v_+$, this concludes the proof.
\end{proof}

As a consequence of Corollaries~\ref{cor: basic bd reg} and \ref{cor: y bd reg 2}, we have the following growth estimate on $|\nab_x u|$ when $f=f(x)$ is Lipschitz.

\begin{corollary}[Boundary Growth of $\pa_i u$]
\label{cor: upgrade f is C1}
Suppose $u\in H^1(B_\l^+, y^a)$ satisfies \eqref{eqn: Half Ball Dirichlet}
where $f = f(x)$ and $f \in C^{0,1}(\overline{B}_\l^+)$.
% and $y^a\pa_y f \in L^\infty(B_\l^+)$. 
Then, for any $i \in 1,\hdots, n$,
\[
|\pa_i u(X)| \leq C y^{2s} \quad \text{in } B_{\l/4}^+.
\] 
Here, the constant $C>0$ depends on $a, n, \l, \| u\|_{L^\infty(B^+_{\l})}$, and $ \|f\|_{C^{0,1}(\overline{B}^+_\l)}$.
\end{corollary}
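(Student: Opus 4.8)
The plan is to treat $\pa_i u$ as a solution of a degenerate equation on $B_{\l/4}^+$ vanishing on $\{y=0\}$ and to build explicit barriers of the form $C y^{2s}$ to control its growth near the thin set. First I would record that, since $f=f(x)$ is Lipschitz, $\pa_i f \in L^\infty(B_\l^+)$ for $i=1,\dots,n-1$ (interpreting $\pa_i$ as a tangential derivative in $x$), and the reflected function $\bar u$ satisfies $L_a(\pa_i \bar u) = |y|^a \pa_i \bar f$ in $B_{\l/2}$ with $\pa_i \bar u$ odd in $y$ and thus vanishing on $\{y=0\}$. By Corollary~\ref{cor: basic bd reg} applied on a slightly smaller ball (or directly by Proposition~\ref{prop: derivative estimates}), $\pa_i u$ is bounded on $B_{\l/4}^+$, with the bound depending on $\l$, $\|u\|_{L^\infty(B_\l^+)}$, and $\|f\|_{C^{0,1}}$; moreover $\La(\pa_i u) = y^a \pa_i f$ in the half-ball. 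The point is to upgrade this $L^\infty$ bound to the quantitative vanishing rate $y^{2s}$.

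The key step is the barrier argument. I would fix a point $X_0=(x_0,0)\in B_{\l/8}\cap\{y=0\}$ and work in a ball $B_\rho(X_0)\subset B_{\l/4}^+$ (half-ball, intersected with $\{y>0\}$). The natural comparison function is a multiple of $y^{2s} = y^{1-a}$, which is $L_a$-harmonic away from $\{y=0\}$ (indeed $\pa_y(y^a \pa_y y^{1-a}) = (1-a)\,\pa_y y^0 = 0$) and vanishes on $\{y=0\}$; more precisely $L_a(y^{1-a}) = (1-a)\,\mathcal{H}^n\llcorner\{y=0\}$ in the sense of distributions. Since $\|\pa_i u\|_{L^\infty(B_{\l/4}^+)} \le C_0$ from the previous step and $\|y^a \pa_i f\|_{L^\infty} \le \|f\|_{C^{0,1}}$, I would consider
\[
v_\pm := \pm\,\pa_i u \;-\; C_1\big(y^{1-a} + \text{(quadratic correction)}\big)
\]
chosen so that $L_a v_\pm \le 0$ in $B_\rho^+(X_0)\setminus\{y=0\}$ and $v_\pm \le 0$ on the curved part of $\pa B_\rho^+(X_0)$ and on $\{y=0\}$, the latter because $\pa_i u$ vanishes there while $y^{1-a}\ge 0$. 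The maximum principle for $L_a$ (which holds since $|y|^a$ is an $A_2$ weight and bounded $L_a$-subsolutions satisfy Proposition~\ref{prop: harnack}) then forces $v_\pm \le 0$, i.e. $|\pa_i u| \le C_1 y^{1-a} = C_1 y^{2s}$ near $X_0$. Covering $B_{\l/8}\cap\{y=0\}$ and combining with the interior bound on $\{y \ge c\l\}$ from Proposition~\ref{prop: derivative estimates} gives the claim on all of $B_{\l/4}^+$ (after relabeling $\l$). For the normal derivative direction $i=n$ versus tangential ones there is no real difference here since all of $\pa_1,\dots,\pa_n$ commute with $L_a$; the only genuinely distinct component, $y^a\pa_y u$, is not asked about in this corollary, and indeed Corollary~\ref{cor: y bd reg 2} already handled its (different) behavior.

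The main obstacle I anticipate is getting the barrier to close up correctly: $y^{1-a}$ alone is $L_a$-harmonic off $\{y=0\}$ but is not a supersolution accounting for the inhomogeneity $y^a\pa_i f$, so one must add a correction term — the natural candidate is a multiple of $y^{1-a}|X-X_0|^2$ or of $|x-x_0|^2 - \text{(something in }y)$ — and check that $L_a$ applied to it produces the right sign and a term of size $\gtrsim y^a$ to dominate $y^a\pa_i f$, while still vanishing (or being nonnegative) on $\{y=0\}$ and dominating $\pa_i u$ on the spherical boundary. Balancing the constants $C_1$ and the radius $\rho$ against $C_0$, $\|f\|_{C^{0,1}}$, and $\l$ is the only delicate bookkeeping; since $1-a = 2s \in (0,2)$, the exponent is admissible and the correction can always be taken of order $|X-X_0|^2$ without interfering with the leading $y^{2s}$ rate. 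Once the barrier inequalities are verified, the conclusion is immediate from comparison, and tracking the dependence of $C$ on the stated quantities is routine.
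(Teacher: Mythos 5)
Your strategy is sound and gives the right conclusion, but it is genuinely different from the paper's argument, which is much shorter: since $f=f(x)$, the function $\pa_i u$ (for any $i=1,\dots,n$, not just $i\le n-1$) itself satisfies \eqref{eqn: Half Ball Dirichlet} with right-hand side $h=\pa_i f\in L^\infty$ depending only on $x$, so Corollary~\ref{cor: y bd reg 2} applies directly to $\pa_i u$ and yields $|y^a\pa_y(\pa_i u)|\le C$; integrating $\pa_y(\pa_i u)$ from $y=0$ and using $\pa_i u(x,0)=0$ gives $|\pa_i u|\le C\int_0^y t^{-a}\,\d t = Cy^{2s}$. In other words, the paper runs the barrier argument once, on the conjugate function $|y|^a\pa_y(\pa_i\bar u)$ in the \emph{full} ball (where local boundedness, Proposition~\ref{prop: harnack}, replaces comparison with boundary data), and then recovers the $y^{2s}$ decay by integration. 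Your route — a direct comparison of $\pa_i u$ with $y^{1-a}$ plus corrections on a half-ball — works but costs more, for two reasons you should make explicit. First, a sign: to conclude $v_\pm\le 0$ from $v_\pm\le 0$ on $\pa B_\rho^+(X_0)$ you need $v_\pm$ to be a \emph{subsolution}, $L_a v_\pm\ge 0$, not $L_a v_\pm\le 0$ as written; concretely one takes $\Phi=Ay^{1-a}+B\big(|x-x_0|^2-\tfrac{n}{1+a}y^2\big)-\tfrac{M}{2(1+a)}y^2$ with $M=\|f\|_{C^{0,1}}$, so that $L_a\Phi=-My^a\le y^a\pa_i f$ and hence $L_a(\pm\pa_i u-\Phi)\ge 0$. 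Second, the real delicacy is not the inhomogeneity but the corner of the half-ball: $y^{1-a}$ degenerates on the part of the spherical boundary near $\{y=0\}$, so it cannot by itself dominate the $L^\infty$ bound on $\pa_i u$ there; this is exactly what forces the $a$-harmonic term $B(|x-x_0|^2-\tfrac{n}{1+a}y^2)$, which is bounded below by $B\rho^2/2$ where $y$ is small on the sphere. The price is that the resulting bound $|\pa_i u|\le\Phi$ only reduces to $Cy^{2s}$ on the axis $\{x=x_0\}$, so you must let $x_0$ range over the flat boundary and read off the estimate at $x=x_0$ for each point — which your covering step accomplishes. With those two points repaired and the constants tracked, your proof is complete; it is simply a heavier way to reach what Corollary~\ref{cor: y bd reg 2} already packages.
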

\begin{proof}
For any $i = 1,\dots, n$, we have that $L_a	(\pa_i u) = y^a h$ where $h:=\pa_i f$.
Since $f\in C^{0,1}(\overline{B}_{\l}^+)$, it follows that $h \in L^\infty(B_\l^+)$. 
Applying Corollary~\ref{cor: y bd reg 2} to $\pa_i u$ implies that 
\[
|\pa_y(\pa_i u(X))| \leq Cy^{-a},
\]
where, using Proposition~\ref{prop: derivative estimates}, we see that $C$ depends on $a, n, \l, \| u\|_{L^\infty(B^+_{\l})}, \|f\|_{C^{0,1}(\overline{B}^+_\l)}$.
Since $\pa_i u(x, 0) = 0$, we determine that
\[
|\pa_i u(X)|  \leq C \bigg| \int_0^y t^{2s-1} \, \d t \bigg|  = C y^{2s}.
\] 
\end{proof}

We have the same growth estimate for $|\nab_x u|$ when $f$ is less regular and unconstrained to depend only on $x$ so long as it vanishes on $\{ y = 0 \}$.

\begin{corollary}[Boundary Growth of $\pa_i u$]
\label{cor: upgrade f is 0}
Suppose $u \in H^1(B_\l^+, y^a)$ satisfies \eqref{eqn: Half Ball Dirichlet} where $f \in C(\overline{B}_\l^+)$ and $f$ vanishes on $\{ y = 0 \}$.
If $g :=y^a \pa_y f \in L^\infty(B_\l^+)$, then for any $i \in 1,\hdots, n$,
\[
|\pa_i u(X)| \leq C y^{2s} \quad \text{in } B_{\l/4}^+.
\] 
Here, the constant $C>0$ depends on $a, n, \l, \| u\|_{L^\infty(B^+_{\l})}, \|f\|_{L^\infty(B^+_\l)}$, and $\|g\|_{L^{\infty}(B^+_\l)}$.
\end{corollary}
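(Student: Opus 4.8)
The strategy mirrors the proof of Corollary~\ref{cor: upgrade f is C1}, replacing the use of Corollary~\ref{cor: y bd reg 2} (which required $f = f(x)$) with Corollary~\ref{cor: basic bd reg} (which applies precisely when $f$ vanishes on $\{y=0\}$ and $y^a\pa_y f \in L^\infty$). Fix $i \in \{1,\dots,n\}$. Since $\La u = y^a f$ in $B_\l^+$, differentiating gives $\La(\pa_i u) = y^a\pa_i f$ in $B_\l^+$, and $\pa_i u = 0$ on $\{y=0\}$ because $u$ vanishes identically there. Set $h := \pa_i f$; note $h$ vanishes on $\{y=0\}$ (since $f$ does, and $\pa_i$ is a tangential derivative), and $y^a\pa_y h = \pa_i(y^a\pa_y f) = \pa_i g \in L^\infty$? — here is the subtlety: $\pa_i g$ need not be bounded, so one cannot apply the second estimate of Corollary~\ref{cor: basic bd reg} directly to $\pa_i u$ with $h$ in the role of $f$. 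This is the main obstacle, and I address it below.

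Instead, I would work with $\pa_y(\pa_i u)$ via the barrier argument already used in the proof of Corollary~\ref{cor: y bd reg 2}. Let $w := y^a\pa_y(\pa_i u)$. From $\La(\pa_i u) = y^a h$ with $h$ vanishing on $\{y=0\}$, one computes $L_{-a} w = y^a \cdot y^{-a}\cdot(\text{distributional }\pa_y h)$ — and since $h$ vanishes on $\{y=0\}$, there is no singular measure on $\{y=0\}$, so $L_{-a}w = \pa_y h$ in the weak sense, with $\pa_y h = \pa_i(\pa_y f)$. Now $y^a\pa_y f = g \in L^\infty$, so $\pa_y f = y^{-a}g$, and $\pa_i(\pa_y f) = y^{-a}\pa_i g$; boundedness of $y^a$ times this requires $\pa_i g \in L^\infty$, which we do not have. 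To circumvent this, rather than differentiating again, I would apply Corollary~\ref{cor: basic bd reg} \emph{directly to $u$} (not to $\pa_i u$): its first estimate gives $\|\na_x u\|_{L^\infty(B^+_{\l/4})} \le C$, and its second estimate, valid exactly under the present hypotheses ($f$ vanishes on $\{y=0\}$, $g = y^a\pa_y f \in L^\infty$), gives $\|y^a\pa_y u\|_{L^\infty(B^+_{\l/4})} \le C$. But this controls $\pa_i u$ and $y^a\pa_y u$, not $\pa_y(\pa_i u)$, so it alone does not yield the stated $y^{2s}$ growth of $\pa_i u$.

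The correct route, I believe, is: apply Proposition~\ref{prop: derivative estimates} / Corollary~\ref{cor: basic bd reg} to $\pa_i u$ using only its \emph{first} conclusion (the $\na_x$ estimate) on the equation $\La(\pa_i u) = y^a h$ with $h = \pa_i f \in L^\infty$ — this is legitimate since $h \in L^\infty$ with no vanishing requirement needed for the $\na_x$ bound — obtaining $\|\na_x(\pa_i u)\|_{L^\infty(B^+_{\l/8})} \le C$. Then, to get the $y^a$-weighted $\pa_y$ bound on $\pa_i u$, apply the barrier argument of Corollary~\ref{cor: y bd reg 2}'s proof to $\pa_i u$: set $w := y^a\pa_y(\pa_i u)$, so $L_{-a}w = (\text{distribution})$. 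Since $h = \pa_i f$ vanishes on $\{y=0\}$, the computation $L_{-a}w = \pa_y(\pa_i f)$ holds with no Hausdorff-measure term, and crucially $\pa_y(\pa_i f) = \pa_i(\pa_y f)$; writing $\pa_y f = y^{-a}g$ we get $w$ solving $L_{-a}w$ with right-hand side involving $\pa_i g$. Since this is genuinely not bounded in general, the honest fix is to instead note that $g = y^a\pa_y f \in L^\infty$ means $f$ itself satisfies $|f(x,y)| \le C y^{1-a}/(1-a) = C y^{2s}$ for $a<1$; hence $f \in L^\infty$ with $\|f\|_\infty \le C$, and moreover one can treat $\pa_i u$ as solving $\La(\pa_i u) = y^a\pa_i f$ where $\pa_i f$ vanishes on $\{y=0\}$ — then apply Corollary~\ref{cor: basic bd reg}'s \emph{second} estimate to $\pa_i u$ with $\pa_i f$ in the role of ``$f$'' provided $y^a\pa_y(\pa_i f) = \pa_i g$ is bounded. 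When it is not, one approximates: mollify $f$ in $x$ to get $f^\vep$ with $\pa_i g^\vep$ bounded (uniformly in $y$, though not in $\vep$), derive $|\pa_i u^\vep(X)| \le C(\vep) y^{2s}$, and — this is where care is needed — instead extract the estimate from the \emph{a priori} bounds that \emph{are} $\vep$-uniform, namely $\|\na_x u^\vep\|_{L^\infty} + \|y^a\pa_y u^\vep\|_{L^\infty} \le C$ with $C$ independent of $\vep$ (by Corollary~\ref{cor: basic bd reg} applied to $u^\vep$ with data $f^\vep$, using $\|f^\vep\|_\infty \le \|f\|_\infty$ and $\|y^a\pa_y f^\vep\|_\infty = \|g^\vep\|_\infty \le \|g\|_\infty$), then pass to the limit $\vep \to 0$ and finally integrate $\pa_y(\pa_i u)$ from $0$ to $y$ exactly as in the proof of Corollary~\ref{cor: upgrade f is C1}, using $\pa_i u(x,0) = 0$ and the bound $|\pa_y(\pa_i u)(X)| \le C y^{-a}$ which follows by applying Corollary~\ref{cor: basic bd reg} to $\pa_i u$ once the approximation has legitimized it. I expect the main obstacle to be precisely this bookkeeping: ensuring that after differentiating in $x_i$, the resulting right-hand side $\pa_i f$ still satisfies the hypotheses of Corollary~\ref{cor: basic bd reg} (vanishing on $\{y=0\}$ — automatic — and $y^a\pa_y(\pa_i f) \in L^\infty$ — needing an approximation argument since only $g = y^a\pa_y f \in L^\infty$ is assumed, with no control on $\pa_i g$), and then concluding $|\pa_i u(X)| \le C|\int_0^y t^{-a}\,\d t| = C y^{1-a}/(1-a) = C y^{2s}$ in $B_{\l/4}^+$.
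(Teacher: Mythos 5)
You have correctly identified the obstruction---after differentiating the equation in $x_i$, the natural hypothesis for the second estimate of Corollary~\ref{cor: basic bd reg} would be $y^a\pa_y(\pa_i f) = \pa_i g \in L^\infty$, which is not assumed---but your proposed fix does not close the gap. The mollification detour produces only $|\pa_i u^\vep| \leq C(\vep)\,y^{2s}$ with a constant depending on $\|\pa_i g^\vep\|_{L^\infty}$, which blows up as $\vep \to 0$; the $\vep$-uniform bounds you do list ($\|\na_x u^\vep\|_{L^\infty}$ and $\|y^a\pa_y u^\vep\|_{L^\infty}$) control neither $\pa_y(\pa_i u^\vep)$ nor the growth of $\pa_i u^\vep$ in $y$, so there is nothing useful to pass to the limit. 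Your closing assertion that $|\pa_y(\pa_i u)| \leq Cy^{-a}$ ``follows by applying Corollary~\ref{cor: basic bd reg} to $\pa_i u$ once the approximation has legitimized it'' is exactly the step that is never justified with an admissible constant.

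The missing idea is to differentiate in $y$ \emph{first} and in $x_i$ \emph{second}, so that no derivative of $g$ ever appears. Set $\bar w := |y|^a\pa_y\bar u$, where $\bar u$ is the odd extension of $u$ across $\{y=0\}$; then $L_{-a}\bar w = |y|^{-a}\bar g$ in $B_\l$ with $\bar g := |y|^a\pa_y\bar f \in L^\infty(B_\l)$ by hypothesis. The \emph{first} estimate of Proposition~\ref{prop: derivative estimates} (for the operator $L_{-a}$, whose right-hand side datum is simply $\bar g$, with no further differentiability required) gives
\[
\|\pa_i\bar w\|_{L^\infty(B_{\l/4})} \leq C\l^{-1}\|\bar w\|_{L^\infty(B_{\l})} + C\|\bar g\|_{L^\infty(B_{\l})},
\]
and $\|\bar w\|_{L^\infty}$ is in turn controlled in terms of $\|u\|_{L^\infty}$, $\|f\|_{L^\infty}$, and $\|g\|_{L^\infty}$ by the second estimate of Corollary~\ref{cor: basic bd reg} applied to $u$. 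Since the weight $|y|^a$ is independent of $x_i$, one has $\pa_i\bar w = |y|^a\pa_y(\pa_i\bar u)$, so the displayed inequality is precisely the bound $|\pa_y(\pa_i u)| \leq Cy^{-a}$ with a constant depending only on the quantities allowed in the statement; integrating from $0$ to $y$ and using $\pa_i u(x,0)=0$ then yields $|\pa_i u| \leq Cy^{2s}$ exactly as in the proof of Corollary~\ref{cor: upgrade f is C1}.
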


\begin{proof}
Let $\bar{w}:= |y|^a \pa_y \bar{u}$ and note that $L_{-a} \bar{w} = |y|^{-a}\bar{g}$ in $B_\l$ where $\bar{g} := |y|^a\pa_y\bar{f} \in L^\infty(B_\l)$.
From Proposition~\ref{prop: derivative estimates}, we find that 
\[
\|\pa_i \bar{w}\|_{L^\infty(B_{\l/4}^+)} \leq C\l^{-1}\|\bar{w}\|_{L^\infty(B_\l^+)} +C \|\bar{g}\|_{L^\infty(B_\l^+)}
\qquad \forall i \in 1, \hdots , n.
\]
In other words,
\[
|\pa_y(\pa_i u(X))| \leq Cy^{-a},
\]
where we see from Proposition~\ref{prop: derivative estimates} that $C$ depends on $a, n, \l, \| u\|_{L^\infty(B^+_{\l})}, \|f\|_{L^\infty(B^+_\l)}$, and $\|g\|_{L^{\infty}(B^+_\l)}$.
Arguing as in the proof of Corollary~\ref{cor: upgrade f is C1} completes the proof.
\end{proof}

%~~~~~~~~~~~~~~~~~~~~~~~~~~~~~~~~~~~~~~~~~~~~~~~~~~~~~~~%
\section{A Higher Order Boundary Harnack Estimate: $\G \in C^{k+2,\a}$ for $k \geq 0$}
\label{sec: HOBH}

In this section, we prove a pointwise higher order boundary Harnack estimate when $\G$ is at least $C^{2,\a}$.
This estimate, Proposition~\ref{prop: HOBH k pos}, and its $C^{1,\a}$ counterpart, Proposition~\ref{prop: HOBH 1 pos}, will play key roles in proving higher regularity of the free boundary in \eqref{eqn: obstacle problem}, as sketched in the introduction.
We refer the reader to Section~\ref{sec: Proof of Main Thm} for the details of how exactly this is accomplished.

Let $U\in C(B_1)$ be even in $y$ and normalized so that $U(\e_n/2) = 1$.
Suppose further that $U \equiv 0 $ on $\P$ and $U>0$ on $B_1\setminus \P$, and assume $U$ satisfies
\begin{equation}
\label{eqn: LAU}
L_a U = |y|^a\bigg(\frac{U_a}{r} T(x,r) + G(X)\bigg) \quad \text{in } B_1 \setminus \mathcal{P}
\end{equation}
where $T(x,r)$ is a polynomial of degree $k+1$ and 
\begin{equation*}
\label{eqn: H Est}
\|T\| \leq 1 \qquad\text{and}\qquad |G(X)| \leq r^{s-1}|X|^{k+1+\a}.
\end{equation*}
In Proposition~\ref{prop: pointwise Schauder Cka}, we show that if $\G \in C^{k+2,\a}$ with $\| \G\|_{C^{k+2,\a}}\leq 1$ and $U$ is as above, then $U$ takes the form
\begin{equation}
\label{eqn: U from schauder}
U = U_a (P_0 + O(|X|^{k+1+\a}))
\end{equation}
for some polynomial $P_0(x,r)$ of degree $k+1$ with $\|P_0\| \leq C$ and $U_a$ as defined in \eqref{eqn: Ua def}.
Formally, if we differentiate \eqref{eqn: U from schauder}, we find that 
%In addition, assume that the following derivative estimates hold:
\begin{equation}
\label{eqn: grad x}
\na_x U  = \frac{U_a}{r}\Big( s P_0 \nu + r \na_x P_0  + (\pa_r P_0 ) d \nu + O(|X|^{k+1+\a})\Big)
\end{equation}
and
\begin{equation}
\label{eqn: grad r}
\nab U \cdot \nab r = \frac{U_a}{r} \Big( sP_0 + (\pa_r P_0) r + \na_x P_0 \cdot (d\nu) + O(|X|^{k+1+\a})\Big).
\end{equation}
Rigorously justifying these expansions in our application to the fractional obstacle problem will require a delicate argument, which we present in Proposition~\ref{prop: deriv estimates}.
That said, in Proposition~\ref{prop: HOBH k pos}, we simply make the assumption that \eqref{eqn: grad x} and \eqref{eqn: grad r} hold.

\begin{remark}
\label{rmk: ahar case}
When $T \equiv G \equiv 0$, \eqref{eqn: grad x} and \eqref{eqn: grad r} follow by arguing as in Section~5 of \cite{DS3} and the Appendix of \cite{DS5}, using the regularity results in Section~\ref{sec: Prelim}.
\end{remark}

\begin{proposition}
\label{prop: HOBH k pos}
Let $\G \in C^{k+2, \a}$ with $\|\G\|_{C^{k+2,\a}} \leq 1$. 
Let $U, T, G$, and $P_0$ be as in \eqref{eqn: LAU}, \eqref{eqn: grad x}, and  \eqref{eqn: grad r}.
Suppose that $u \in C(B_1)$ is even in $y$ with $\|u\|_{L^\infty(B_1)} \leq 1$, vanishes on $\mathcal{P}$, and satisfies
\begin{equation*}
\La u =|y|^a\bigg(\frac{U_a}{r} R(x,r) + F(X)\bigg)   \quad \text{in }B_1 \setminus \mathcal{P}
\end{equation*}
where $R(x,r)$ is a polynomial of degree $k+1$ with $\| R\| \leq 1$ and 
\[
|F (X) | \leq r^{s-1}|X|^{k+1+\a}.
\]
Then, there exists a polynomial $P(x,r)$ of degree $k+2$ with $\| P \| \leq C$ such that 
\begin{equation*}
\bigg| \frac{u}{U} - P \bigg| \leq C|X|^{k+2+ \a}
\end{equation*}
for some constant $C = C(a,n,k,\a) > 0$.
\end{proposition}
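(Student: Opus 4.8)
The plan is to follow the De Silva--Savin iterative scheme: build the Taylor polynomial $P$ of $u/U$ coefficient by coefficient, at each stage subtracting off the part of $u$ that is ``explained'' by the current partial polynomial times $U$, and estimating the remainder by a decay/improvement-of-flatness argument together with the Schauder estimate of Proposition~\ref{prop: pointwise Schauder Cka}. Concretely, suppose inductively that we have a polynomial $P_j(x,r)$ of degree $\le k+2$ such that $|u - U P_j| \le C_j |X|^{k+2+\a}$ is \emph{almost} true at scale $1$; the goal is to show that $w_j := u - U P_j$ satisfies a rescaled version of the hypotheses of the proposition, so that after blowing up $w_j$ at $0$ by a dyadic factor we recover a function to which the same machinery applies, with the constants not growing. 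The key computation is that $\La(U P_j)$ can be expanded using \eqref{eqn: LAU}, \eqref{eqn: grad x}, and \eqref{eqn: grad r}: since $P_j$ is a polynomial in $(x,r)$, the product rule for $\La = \DIV(|y|^a \nab \cdot)$ gives
\[
\La(U P_j) = P_j \,\La U + 2 |y|^a \nab U \cdot \nab P_j + |y|^a U \,\Delta P_j
\]
(plus the curvature terms coming from $\Delta_x r$, which are lower order and controlled by $\|\G\|_{C^{k+2,\a}}\le 1$), and each term on the right is, after using $\na_x U$ and $\nab U\cdot\nab r$ from \eqref{eqn: grad x}--\eqref{eqn: grad r}, of the form $|y|^a\big(\frac{U_a}{r}(\text{poly of degree }k+1) + (\text{error bounded by }r^{s-1}|X|^{k+1+\a})\big)$. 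Hence $w_j = u - UP_j$ again solves an equation of exactly the type in the statement (with new polynomial $R$ of degree $k+1$ and new error $F$ with the same $r^{s-1}|X|^{k+1+\a}$ bound), provided $\|P_j\|$ stays bounded and the coefficients of $R$ depend linearly on those of $P_j$, $R$, $T$.

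The heart of the argument is the dichotomy at each dyadic scale $\rho$: either $\|w_j\|_{L^\infty(B_\rho)} \le C\rho^{k+2+\a}$, in which case we are done at that scale and can stop; or $\|w_j\|_{L^\infty(B_\rho)} > C\rho^{k+2+\a}$, in which case we normalize $\bar w := w_j/\|w_j\|_{L^\infty(B_\rho)}$, rescale to $B_1$, apply Proposition~\ref{prop: pointwise Schauder Cka} to conclude $\bar w = U_a(Q + O(|X|^{k+1+\a}))$ for a polynomial $Q$ of degree $k+1$ with $\|Q\|\le C$, and then \emph{absorb} the polynomial $U_a Q$ — equivalently $U(P_0^{-1}Q)$ up to controlled error using \eqref{eqn: U from schauder} and $\|P_0\|\ge c > 0$ (here one uses that $U/U_a = P_0 + O(|X|^{k+1+\a})$ is bounded away from $0$ near $\G$, which needs $P_0(0)\neq 0$; this should follow from the normalization $U(\e_n/2)=1$ and the Harnack inequality for $L_a$) — into the next polynomial $P_{j+1}$. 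Iterating and summing the geometric series in $\rho$ produces the limiting polynomial $P$ of degree $k+2$ with $\|P\|\le C$ and $|u/U - P|\le C|X|^{k+2+\a}$, exactly as in \cite{DS4,DS5}.

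The main obstacle I expect is \emph{controlling the interaction between the polynomial coefficients and the degenerate weight}: precisely, verifying that when one expands $\La(UP_j)$ the cross term $|y|^a \nab U \cdot \nab P_j$ really does land in the class ``$|y|^a \frac{U_a}{r}\cdot(\text{degree }k+1\text{ polynomial})$ plus an $r^{s-1}|X|^{k+1+\a}$ error'' without losing a power of $r$ — this is where the specific form $U_a = ((r+d)/2)^s$ and the identities $\nab_x r = \frac{d}{r}\nu$, $\nab_x d = \nu$ are used, and where $a\neq 0$ makes the bookkeeping genuinely different from the harmonic case. A secondary difficulty is the bounded-below-ness of $U/U_a$: one must be careful that $P_0$ does not vanish at (or oscillate to zero near) $0\in\G$, so that dividing by $U$ is legitimate; this requires invoking the boundary Harnack inequality for $L_a$ (cited from \cite{FKJ2}) to compare $U$ with $U_a$ from below on a full neighborhood of $0$ in $B_1\setminus\P$. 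Once these two points are in hand, the geometric-decay iteration is routine and essentially identical to the model argument.
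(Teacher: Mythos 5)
There is a genuine gap in the mechanism you propose for the improvement step. Your dichotomy at scale $\rho$ invokes Proposition~\ref{prop: pointwise Schauder Cka} applied to the normalized remainder, but that Schauder estimate only produces a polynomial of degree $k+1$ with error $O(U_a|X|^{k+1+\a})$ — it saturates exactly one order below what Proposition~\ref{prop: HOBH k pos} claims ($|u-UP|\leq CU|X|^{k+2+\a}$ with $P$ of degree $k+2$). No amount of dyadic iteration of a $(k+1+\a)$-order gain can close to a $(k+2+\a)$-order conclusion; if you track the error after rescaling back from $B_\rho$, the inequality you need fails by a factor of $|X|$. The paper's proof instead drives the improvement with the \emph{flat-model} result (Proposition~\ref{prop: Schauder flat boundary}), which approximates the $a$-harmonic replacement $w$ of the rescaled remainder to arbitrary order (in particular to order $k+3+s$ in $B_\rho$), after showing by compactness that $w$ is close to the flat solution $\bar w$ when $\|\G\|_{C^{k+2,\a}}\leq\vep$ is small. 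The extra order for $u/U$ versus $u/U_a$ comes precisely from the stronger hypotheses here ($R$ of degree $k+1$, $|F|\leq r^{s-1}|X|^{k+1+\a}$, and the derivative expansions \eqref{eqn: grad x}--\eqref{eqn: grad r}), not from Proposition~\ref{prop: pointwise Schauder Cka}; see Remark~\ref{rem: U v. Ua expansions}.

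A second, related omission: you say the remainder $w_j=u-UP_j$ "again solves an equation of exactly the type in the statement, with a new polynomial $R$ of degree $k+1$," but if a nonzero polynomial part $\frac{U_a}{r}R'$ with $\|R'\|\sim 1$ persists in the right-hand side, the remainder cannot decay like $|X|^{k+2+\a+s}$ at small scales — the right-hand side does not rescale favorably. The iteration only closes because the polynomial part is \emph{cancelled exactly}: one must choose $P_j$ so that the coefficients $A_{\sigma l}$ in \eqref{eqn: poly coeffs HOBH} coincide with those of $R$ (the "approximating polynomial" condition), leaving only the error $|h|\leq C\vep\|P_j\|r^{s-1}|X|^{k+1+\a}$; and at each scale the polynomial coming from the flat model must be corrected by solving the perturbed linear system \eqref{eqn: correction poly} so that the updated polynomial remains approximating. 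You gesture at the linear dependence of the coefficients but do not set up this cancellation, and without it the scheme fails. (Your treatment of $P_0(0)\neq 0$ is also too quick — neither $U$ nor $U_a$ is $a$-harmonic, so the boundary Harnack inequality cannot be applied to them directly; one must pass through $a$-harmonic comparison functions and a barrier bound as in Remark~\ref{rmk: nonzero constant} — but this is minor next to the two issues above.)
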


Proposition~\ref{prop: HOBH k pos} is proved via a perturbation argument that relies on following result,
where we consider the specific case that $T, G, R, F \equiv 0$ and $\G$ is flat. 

\begin{proposition}
\label{prop: Schauder flat boundary} 
Suppose $u \in C(B_1)$ is even in $y$ with $\|u\|_{L^{\infty}(B_1)} \leq 1$ and satisfies
\begin{equation}
\label{eqn: G flat eq}
\begin{cases}
\La u = 0 &\text{in } B_1 \setminus \{ x_n \leq 0, \, y = 0\} \\
u = 0 &\text{on } \{ x_n \leq 0, \, y = 0\}.
\end{cases}
\end{equation}
Then, for any $k \geq 0$, there exists a polynomial $\bar{P}(x,r)$ of degree $k$ with $\|\bar{P}\| \leq C$ such that $\bar{U}_a \bar{P}$ is $a$-harmonic in $B_1 \setminus \{ x_n \leq 0,\, y = 0\}$ and 
\begin{equation}
\label{eqn: flat implies smooth}
|u -  \bar{U}_a \bar{P}| \leq C\bar U_a|X|^{k+1}
\end{equation}
for some constant $C = C(a,n,k) > 0$.
\end{proposition}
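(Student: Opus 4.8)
The plan is to reduce the statement to a two-dimensional problem, understand the structure of homogeneous $a$-harmonic functions vanishing on the negative $x_n$-axis in that two-dimensional setting, and then build the polynomial $\bar P$ by a power-series-like expansion. Since the slit $\{x_n \le 0,\, y=0\}$ is flat and translation-invariant in the $x'$ variables, I would first separate variables: a solution of \eqref{eqn: G flat eq} can be decomposed through its dependence on $x'$, reducing matters to showing that an $a$-harmonic function $w = w(x_n,y)$ on $B_1^* \subset \R^2$, even in $y$, vanishing on $\{x_n\le 0,\, y=0\}$, admits an expansion $w = \bar U_a(\bar P(x_n,r) + O(|z|^{k+1}))$ with the $\bar U_a \bar P$-piece $a$-harmonic. (Here one must keep careful track of the fact that differentiating in $x'$ preserves the class, so a Taylor expansion in $x'$ of the full solution, with $(x_n,y)$-dependent coefficients each handled by the 2D result, assembles the claimed polynomial in $(x,r)$; the universal bound $\|\bar P\| \le C$ comes from the interior estimates of Section~\ref{sec: Prelim} applied to the $x'$-derivatives of $u$.)

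In the two-dimensional core, the key observation — already flagged in the introduction — is that for each $a \ne 0$ the operator $\La$ forces a rigid degeneracy on solutions vanishing on the negative axis. Writing $z = (x_n,y)$ in polar-type coordinates adapted to the slit, one finds that the homogeneous $a$-harmonic functions even in $y$ and vanishing on $\{x_n \le 0,\, y=0\}$ are exactly $\bar U_a$ times homogeneous polynomials in $(x_n,r)$ that are "$\La$-compatible," i.e. for which $\bar U_a \cdot (\text{polynomial})$ is $a$-harmonic; these come in integer-spaced homogeneities $s, s+1, s+2, \dots$. So I would: (i) classify these homogeneous solutions explicitly, exhibiting for each degree $j$ a finite-dimensional space $\mathcal{V}_j$ of polynomials $Q$ of degree $j$ with $\La(\bar U_a Q)=0$; (ii) show that any solution $w$ as above has, near $0$, a well-defined leading term in $\mathcal{V}_{j_0}$ for some $j_0 \ge 0$, by a blow-up/Almgren-type monotonicity or a direct Fourier-in-angle argument using that $\bar U_a$ is the unique positive $a$-harmonic function vanishing on the slit (as recalled after \eqref{eqn: Uabar def}); (iii) subtract that term and iterate, producing $\bar P = \sum_{j=0}^{k} Q_j$ with each partial remainder controlled like $\bar U_a |z|^{j+1}$ — this is where one invokes the interior regularity estimates (Propositions~\ref{prop: harnack}--\ref{prop: derivative estimates}) applied to $w/\bar U_a$ or, more safely, to appropriate derivatives of $w$, to upgrade "leading term exists" to the quantitative decay \eqref{eqn: flat implies smooth}.

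The main obstacle, and the step I expect to require the most care, is step (iii): obtaining the \emph{quantitative} remainder estimate $|w - \bar U_a \bar P| \le C\bar U_a |z|^{k+1}$ rather than merely an asymptotic expansion. The difficulty is that $w/\bar U_a$ is not itself a solution of a nice equation — $\bar U_a$ is $a$-harmonic but the conjugated operator $\bar U_a^{-1} \La (\bar U_a \,\cdot\,)$ is a degenerate operator with singular coefficients near the slit — so one cannot simply quote Schauder estimates for the quotient. The way around this is to argue by compactness/contradiction in the spirit of De Silva--Savin: if the estimate failed at some scale, rescale, extract a limit that is a global $a$-harmonic function vanishing on the slit with controlled growth, use the homogeneity classification to conclude the limit is a polynomial of degree $\le k$ times $\bar U_a$ (hence absorbed into $\bar P$), and reach a contradiction with the normalization of the blow-up sequence. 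Establishing enough compactness for this — uniform $C^{0,\a}$ bounds up to the slit from Proposition~\ref{cor: holder}, plus the growth control on $\nabla_x w$ near $\{y=0\}$ of the type in Corollaries~\ref{cor: upgrade f is C1}--\ref{cor: upgrade f is 0} — and checking that no homogeneities other than $s + \N$ can appear in the limit, is the technical heart of the argument. The requirement that $\bar U_a \bar P$ be exactly $a$-harmonic is then automatic, since each $Q_j$ was chosen in $\mathcal{V}_j$.
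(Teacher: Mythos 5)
Your overall architecture --- reduce to two dimensions, classify the homogeneous solutions vanishing on the slit, and assemble $\bar{P}$ degree by degree --- is the same as the paper's. But there are two concrete gaps. First, the reduction to two dimensions is not a clean separation of variables: writing $\La u = |y|^a\Delta_{x'}u + \DIV_z(|y|^a\nab_z u)$, the restriction of $u$ to a slice $\{x'=\text{const}\}$ satisfies a two-dimensional equation with right-hand side $-|y|^a\Delta_{x'}u$, not the homogeneous equation. The paper therefore needs a two-dimensional result \emph{with} right-hand side (Lemma~\ref{lem: 2 dim RHS}) and a bootstrap: translation invariance in $x'$ and the positive $\La$-capacity of the slit give $u\in C^\infty_{x'}$, so $f:=-\Delta_{x'}u$ is H\"older with $f(x',0)=0$; one applies the two-dimensional lemma with $k=0$, observes that $f$ inherits the resulting expansion of $u$, and iterates with $k=2j$. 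Your Taylor-expansion-in-$x'$ picture, which treats the $(x_n,y)$-dependent coefficients as genuinely two-dimensional $a$-harmonic functions, skips this coupling.

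The deeper gap is in the two-dimensional core, precisely where you write ``the technical heart.'' Your blow-up/contradiction scheme for the quantitative remainder requires a Liouville-type classification of global solutions of polynomial growth, which in turn requires knowing that the explicit homogeneous solutions exhaust \emph{all} admissible homogeneities --- a completeness statement you never supply. The paper's route is different and avoids contradiction arguments altogether: the conformal change $z\mapsto z^2$ unzips the slit into the half-disk, turning $\bar{U}_a$ into $|z_1|^{-a}z_1$ and $\La$ into $\bLa = \frac{1}{4|z|^2}\DIV(|2z_1z_2|^a\nab\,\cdot\,)$; the explicit homogeneous solutions $\bar{u}_j$ of Remark~\ref{rem: homogeneous solns} are then shown to form an orthonormal basis of $L^2(\pa D_1, |2z_1z_2|^a)$, with orthogonality from Green's identities and density from the M\"untz--Sz\'asz theorem (Lemma~\ref{lem: basis}). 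Given completeness, $u$ \emph{equals} its series in $D_{1/2}$ by the monotonicity estimate $\|u\|_{L^2(D_1,\bar{\om}_a)}\le C\|u\|_{L^2(\pa D_1,\bar{\om}_a)}$ (Lemma~\ref{lem: monotone}) together with local boundedness, and the bound $|u - |z_1|^{-a}z_1 Q|\le C|z_1|^{2s}|z|^{2k+2}$ follows from the homogeneity of the tail plus one application of the ordinary boundary Harnack inequality in $\{z_1>0\}$. This also disposes of your worry about the singular conjugated operator $\bar{U}_a^{-1}\La(\bar{U}_a\,\cdot\,)$, which never appears. Without the density argument (or an equivalent spectral classification), your step~(iii) does not close.
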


Recall that $\bar U_a$, defined in \eqref{eqn: Uabar def}, is $U_a$ when $\G$ is flat. 
We postpone the proof of Proposition~\ref{prop: Schauder flat boundary} until Section~\ref{sec: Flat}.
In order to proceed with proof of Proposition~\ref{prop: HOBH k pos}, we need to adapt the notion of approximating polynomials for $u/U$, introduced in \cite{DS5}, to our setting. 
Observe that
\begin{equation}
\begin{split}
\label{eqn: La U P}
L_a (U x^\mu r^m ) & = x^\mu r^m L_a U+ U \La (x^\mu r^m) + 2 |y|^a \na(x^\mu r^m ) \cdot \na U \\
&=|y|^a ({\rm I} +{\rm II} + {\rm III})
\end{split}
\end{equation}
where, letting $\bar{\imath}$ denote the $n$-tuple with a one in the $i$th position and zeros everywhere else,
\begin{align*} 
{\rm I} & =  \frac{U_a}{r} x^\mu r^m T(x,r) + x^\mu r^m G(X), \\
{\rm II} &=  \frac{U}{r} ( m (a+ m -d \kappa)x^\mu r^{m-1}  +\mu_i (\mu_i -1) r^{m+1 } x^{\mu-2\bar\imath} + 2dmr^{m-1} \na(x^\mu) \cdot \nu ),\\
{\rm III}& = 2(r^m \na_x U \cdot \na(x^m) + m x^\mu r^{m-1} \nab U \cdot \nab r).
\end{align*}
Up to a dilation, we can assume that
\begin{equation}
\label{eqn: smallness assumption} 
\|\G\|_{C^{k+2,\a} } \leq \vep, \qquad
\|T\|, \, \|R\| \leq \vep, \qquad\text{and}\qquad |G(X)|,\, |F(X)| \leq \vep r^{s-1}|X|^{k+1+\a}
\end{equation}
for any $\vep > 0$.
For $\vep > 0$ sufficiently and universally small, the constant term of $P_0$ in \eqref{eqn: U from schauder} is nonzero (see Remark~\ref{rmk: nonzero constant} below). So, up to multiplication by a constant, \eqref{eqn: U from schauder} takes the form 
\begin{equation}
\label{eqn: U and Ua}
U = U_a (1 + \vep Q_0 + \vep O(|X|^{k+1+\a})),
\end{equation}
where $Q_0(x,r)$ is a degree $k+1$ polynomial with zero constant term and $\|Q_0\| \leq 1$.

Taylor expansions of $\nu$, $\k$, and $d$ yield
\[
\nu_i = \delta_{in} + \cdots + \vep O(|X|^{k+1+\a}), \quad \k = \k(0) + \cdots + \vep O(|X|^{k+\a}), \quad\text{and}\quad d= x_n + \cdots + \vep O(|X|^{k+2+\a}).
\]
Hence, using \eqref{eqn: grad x} and \eqref{eqn: grad r} to  expand III, we find that ${\rm I, II}$, and ${\rm III}$ become
\begin{equation}
\label{eqn: I,II,III}
\begin{split}
{\rm I} &  = \frac{U_a}{r} s_{\sigma l}^{\mu m} x^\sigma r^l  + \vep O(r^{s-1}|X|^{k+1+\a}),\\
{\rm II} & =  \frac{ U}{r} \Big( m(a+ m +2 \mu_n ) x^\mu r^{m-1}  +\mu_i (\mu_i -1)  r^{m+1 } x^{\mu-2\bar\imath} + a_{\sigma l}^{\mu m} x^\sigma r^l + \vep O(|X|^{k+1+\a})\Big),\\
{\rm III} & = \frac{U_a}{r} \Big(2sr^m \mu_n x^{\mu-\bar n} + 2sm x^\mu r^{m-1} + b_{\sigma l}^{\mu m} x^\sigma r^l  + \vep O(|X|^{k+1+\a})\Big).
\end{split}
\end{equation}
Here, $s_{ \sigma l}^{\mu m}, a_{\sigma l}^{\mu m}$, and $b_{\sigma l}^{\mu m}$ are coefficients of monomials of degree at least $|\mu| + m$ and at most $k+1$; that is,
\[
s_{\sigma l}^{\mu m}, a_{\sigma l}^{\mu m}, b_{\sigma l}^{\mu m} \neq 0 \qquad\text{only if}\qquad |\mu|+m \leq |\sigma | + l \leq k+1.
\]
Furthermore, since the monomials $a_{\sigma l}^{\mu m} x^\sigma r^l $ and $b_{\sigma l}^{\mu m} x^\sigma r^l $ come from the Taylor expansions of $\nu, \k,$ and $d$ (which vanish when $\G$ is flat), recalling \eqref{eqn: smallness assumption}, we have that
\[
|s_{\sigma l}^{\mu m}|,
|a_{\sigma l}^{\mu m}|, |b_{\sigma l}^{\mu m}| \leq C\vep.
\]
Therefore, from \eqref{eqn: La U P}, \eqref{eqn: U and Ua}, and \eqref{eqn: I,II,III}, we determine that
\begin{equation*}
\begin{split}
L_a (U x^\mu r^m)= 
|y|^a \bigg(\frac{U_a}{r}& \Big( 
m x^\mu r^{m-1} (1 + m +2 \mu_n )  + 2s\mu_n r^m x^{\mu-\bar n} +\mu_i (\mu_i -1)  r^{m+1 } x^{\mu-2\bar \imath}   + c_{\sigma l}^{\mu m} x^\sigma r^l \Big) \\
&+ \vep O(r^{s-1}|X|^{k+1+\a})\bigg)
\end{split}
\end{equation*}
where 
\[
c_{\sigma l}^{\mu m} \neq 0 \qquad \text{only if}\qquad |\mu|+m \leq |\sigma | + l \leq k+1 \qquad\text{and}\qquad |c_{\sigma l}^{\mu m}| \leq C\vep.
\]
Thus, given a polynomial $P(x,r) = p_{\mu m} x^\mu r^m$ of degree $k+2$, 
\[ 
L_a(UP) =  |y|^a \bigg(\frac{U_a}{r}A_{\sigma l} x^\sigma r^l + h(X)\bigg) 
\]
where $|\sigma | + l \leq k+1$,
\begin{equation}
\label{eqn: approx poly error term HOBH}
|h(X)| \leq C \vep\|P\| r^{s-1}|X|^{k+1+\a},
\end{equation}
and
\begin{equation}
\label{eqn: poly coeffs HOBH}
A_{\sigma l}  = (l +1)(l+2+2\sigma_n) p_{\sigma, l+1} + 2s(\sigma_n + 1) p_{\sigma+\bar n, l} + (\sigma_i +1)(\sigma_i + 2) p_{\sigma + 2\bar \imath, l-1} + c_{\sigma l}^{\mu m} p_{\mu m}.
\end{equation}
From \eqref{eqn: poly coeffs HOBH}, we see that $p_{\sigma,l+1}$ can be expressed in terms of $A_{\sigma l}$, a linear combination of $p_{\mu m}$ for $\mu + m \leq |\sigma| + l$, and a linear combination of $p_{\mu m}$ for $\mu + m \leq |\sigma| + l$ and $m \leq l$.
Consequently, the coefficients $p_{\mu m}$ are uniquely determined by the linear system \eqref{eqn: poly coeffs HOBH} given $A_{\sigma l}$ and $p_{\mu 0}$.

\begin{definition}\label{def: approx poly}
Let $u$ and $U$ be as in Proposition~\ref{prop: HOBH k pos}.
A polynomial $P(x,r)$ of degree $k+2$ is {\it approximating} for $u/U$ if the coefficients $A_{\sigma l}$ coincide with the coefficients of $R$.
\end{definition}
 
Before we prove Proposition~\ref{prop: HOBH k pos}, let us make two remarks and state a lemma.

\begin{remark}
\label{rem: bHi and Ua}
While $U_a$ is not $a$-harmonic in $B_1 \setminus \mathcal{P}$, it is comparable in $B_1$ to a function $V_a$ that is $a$-harmonic in $B_1 \setminus \mathcal{P}$. Indeed, using the upper and lower barriers
$
V_{\pm} := (1 \pm r/2)U_a,
$
one can construct such a function $V_a$ by Perron's method. \end{remark}

\begin{remark}
\label{rmk: nonzero constant}
Up to an initial dilation, taking $\|\G\|_{C^{k+2,\a}} \leq \vep$ for a universally small $\vep > 0$, the constant term of $P_0$ in \eqref{eqn: U from schauder} is nonzero.
If $U$ and $U_a$ were $a$-harmonic in $B_1 \setminus \mathcal{P}$, this would follow directly from the boundary Harnack estimate applied to $U_a/U$ without a dilation.
By Remark~\ref{rem: bHi and Ua}, $U_a$ is comparable to the $a$-harmonic function $V_a$.
For $\vep > 0$ sufficiently small (universally so), we will find that $U$ is also comparable to an $a$-harmonic function $W$; hence, we can effectively apply the boundary Harnack estimate to $U_a/U$ passing through the quotient $V_a/W$ to conclude.
More specifically, after dilating, let us normalize so that $U(\e_n/2) = 1$.
First, let $W$ satisfy
\[
\begin{cases}
L_a W = 0 & \text{in } B_1 \setminus \P\\
W = U & \text{on } \pa B_1 \cup \P.
\end{cases}
\]
The strong maximum principle ensures that $W$ is positive in $B_1 \setminus \mathcal{P}$. Applying the boundary Harnack estimate to $W$ and $V_a$, Remark~\ref{rem: bHi and Ua} implies that
\[
cU_a \leq \frac{W}{W(\e_n/2)} \leq CU_a.
\]
Second, let $V := U - W$, and observe that
\[
\begin{cases}
|L_a V| \leq C\vep  |y|^a r^{s-1} & \text{in }B_1 \setminus \P\\
V = 0 & \text{on } \pa B_1 \cup \P.
\end{cases}
\]
Lemma~\ref{lem: Simple Barriers} then shows that 
\[
|V| \leq  C\vep U_a.
\]
So, $1 - C\vep \leq W(\e_n/2) \leq 1 + C\vep$ and for $\vep > 0$ small, 
\[
cU_a \leq U \leq CU_a.
\]
Now, if the constant term of $P_0$ were zero, then \eqref{eqn: U from schauder} would yield $cU_a \leq U \leq CU_a|X|,
$
which is impossible.
\end{remark}

In addition to its use in Remark~\ref{rmk: nonzero constant} above, the following lemma will be used at several other points. 
The proof follows by considering the upper and lower barriers $v_{\pm} := \pm \, C (U_a - U_a^{1/s})$.

\begin{lemma}
\label{lem: Simple Barriers}
Let $v\in C(B_1)$ satisfy
\[
\begin{cases}
|\La v| \leq |y|^a r^{s-1} &\text{in } B_1 \setminus \mathcal{P} \\
v = 0 &\text{on } \partial B_1 \cup \mathcal{P}.
\end{cases}
\]
Then,
\[
|v| \leq CU_a
\]
for some constant $C = C(a) > 0$.
\end{lemma}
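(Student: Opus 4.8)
The plan is to use a direct barrier argument. I would take the candidate barriers $v_{\pm} := \pm\, C\,(U_a - U_a^{1/s})$ for a suitably large universal constant $C = C(a)$, and show that $v_-$ is a subsolution and $v_+$ is a supersolution of $L_a$ in $B_1 \setminus \mathcal P$ with boundary data dominating that of $v$ on $\partial B_1 \cup \mathcal P$; the comparison principle for $L_a$ then forces $v_- \leq v \leq v_+$, and since $0 \leq U_a - U_a^{1/s} \leq U_a$ on $B_1$ (here $U_a \leq 1$ because $r \leq 1$ and $d \leq r$, so $U_a = ((r+d)/2)^s \leq 1$, whence $U_a^{1/s} \leq U_a$), we conclude $|v| \leq C U_a$.

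The heart of the matter is the computation of $L_a U_a$ and $L_a(U_a^{1/s})$. Note $U_a^{1/s} = (r+d)/2$, and since $\Delta_x d = -\k$ is bounded and $r$, $d$ are smooth away from $\G$ with controlled derivatives (using the identities $\nab_x r = (d/r)\nu$, $\nab_x d = \nu$ recorded in the preliminaries, together with $\pa_y r = y/r$), one computes $L_a(U_a^{1/s}) = |y|^a(\tfrac12\Delta d + \cdots)$, which is bounded by $C|y|^a$ in $B_1\setminus\mathcal P$; in particular $|L_a(U_a^{1/s})| \leq C|y|^a \leq C|y|^a r^{s-1}$ since $r \leq 1$ and $s - 1 < 0$. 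For $U_a$ itself, the key structural fact is that $U_a$ is \emph{almost} $a$-harmonic: when $\G$ is flat, $\bar U_a$ is exactly $a$-harmonic, and in general the curvature terms produce an error. A direct differentiation of $U_a = ((r+d)/2)^s$, using $s = (1-a)/2$ and the identities above, gives $L_a U_a = |y|^a\,\om(X)$ where $\om$ collects terms involving $\k$ and lower-order geometric quantities; the flat-case vanishing plus $\|\G\|_{C^{k+2,\a}} \leq 1$ gives $|\om(X)| \leq C r^{s-1}$ near $\G$ (the $r^{s-1}$ blow-up being the genuine rate, matching the statement's hypothesis on $L_a v$).

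Combining, $L_a(U_a - U_a^{1/s}) = |y|^a(\om - \text{(bounded)})$, so $|L_a(U_a - U_a^{1/s})| \geq c\,|y|^a r^{s-1}$ with the correct sign near $\G$ for $C$ large (the $r^{s-1}$ term from $U_a$ dominates), while away from $\G$, where $r$ is bounded below, one checks the barrier still works since there $U_a - U_a^{1/s}$ is itself bounded below by a positive constant on, say, $\{r \geq 1/2\}\setminus(\text{small neighborhood of }\partial B_1)$, so any bounded perturbation can be absorbed by enlarging $C$. On $\partial B_1$ both $v$ and $v_\pm$ vanish, and on $\mathcal P$ we have $U_a = 0$ hence $v_\pm = 0 = v$; thus $v_\pm \mp v$ are $a$-superharmonic/subharmonic with nonnegative/nonpositive boundary values, and the maximum principle closes the argument.

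The main obstacle is the curvature computation: verifying that $L_a U_a = |y|^a\,\om$ with $|\om| \leq C r^{s-1}$ requires carefully tracking how the derivatives of the distance functions $r$ and $d$ interact with the weight $|y|^a$ and with the curvature $\k$ of $\G$, and confirming that the flat-case exact $a$-harmonicity of $\bar U_a$ makes the dangerous (non-integrable-looking) terms cancel, leaving only the $r^{s-1}$ rate which is exactly what the hypothesis allows. Once that estimate is in hand, the comparison principle argument is routine.
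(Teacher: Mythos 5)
Your choice of barriers, $v_{\pm}=\pm\,C(U_a-U_a^{1/s})$, is exactly the one the paper uses, and the comparison-principle skeleton is right. However, the computation at the heart of your argument --- the verification that $v_+$ is a strict supersolution at the rate $|y|^a r^{s-1}$ --- has the roles of the two terms reversed, and as written the mechanism you describe does not produce a sign.

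First, $L_a(U_a^{1/s})=\tfrac12 L_a(r+d)$ is \emph{not} bounded by $C|y|^a$. A direct computation using $\nabla_x r=(d/r)\nu$, $\partial_y r=y/r$, and $\Delta_x d=-\kappa$ gives
\[
L_a r=|y|^a\,\frac{2(1-s)-d\kappa}{r},\qquad L_a d=-|y|^a\kappa,
\]
so $L_a(U_a^{1/s})=\tfrac{(1-s)|y|^a}{r}+|y|^a O(|\kappa|)$, which blows up like $|y|^a/r$ near $\G$ (already in the flat case, where $L_a\bigl(\tfrac{|z|+x_n}{2}\bigr)=(1-s)|y|^a/|z|$). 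This $1/r$ singularity, with its definite positive sign, is precisely the point: it yields $L_a(-U_a^{1/s})\le -c\,|y|^a/r\le -c\,|y|^a r^{s-1}$ (using $r\le 1$), which is the term that dominates the hypothesis $|L_a v|\le |y|^a r^{s-1}$. Second, the term you assign the dominant role to, $L_a U_a=-s\kappa\,|y|^a\,U_a/r$ (Remark~\ref{rem: U v. Ua expansions}), has \emph{indefinite} sign --- it changes sign with the curvature $\kappa$ and vanishes identically when $\G$ is flat --- and its magnitude $C|y|^a U_a/r\le C|y|^a r^{s-1}=C|y|^a r^{s}\cdot r^{-1}$ is $o(|y|^a/r)$ near $\G$; it is an error term to be absorbed, not the source of the supersolution inequality. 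Under your accounting ($L_a U_a=|y|^a\omega$ dominant, $L_a(U_a^{1/s})$ bounded), the flat case would give $L_a(U_a-U_a^{1/s})=-|y|^a\cdot(\text{bounded})$, which does not dominate $-|y|^a r^{s-1}$, so the comparison would fail. Relatedly, your treatment of the region $\{r\ge 1/2\}$ by ``absorbing bounded perturbations by enlarging $C$'' is not a valid comparison argument: the differential inequality $L_a(v_+-v)\le 0$ must hold pointwise throughout $B_1\setminus\mathcal{P}$, and enlarging $C$ multiplies $L_a v_+$ rather than shifting it. With the corrected computation one gets $L_a(U_a-U_a^{1/s})\le -c|y|^a/r$ on all of $B_1\setminus\mathcal{P}$ (after the harmless dilation making $\|\G\|$, hence $|\kappa|$, small), and no region-splitting is needed.
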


We are now in a position to prove Proposition~\ref{prop: HOBH k pos}.

\begin{proof}[Proof of Proposition~\ref{prop: HOBH k pos}]
First, let $\vep > 0$ in \eqref{eqn: smallness assumption} be sufficiently small so that Remark~\ref{rmk: nonzero constant} holds.
Then, solving a system of linear equations as discussed above, we obtain an initial approximating polynomial $Q^0(x,r)$ of degree $k+2$ for $u/U$.
Up to multiplying $u$ by a small constant and further decreasing $\vep > 0$ (recall that $\|Q^0\| \leq C\vep$), we can assume that
\[
\| Q^0\| \leq 1, \qquad 
\|u-UQ^0\|_{L^{\infty}(B_1)}\leq 1, \qquad\text{and}\qquad |\La [u-UQ^0](X)| \leq |y|^a \vep r^{s-1}|X|^{k+1+\a}.  
\]

\noindent{\it Step 1: There exists $0<\rho<1$, depending on $a,n,k$, and $\alpha$, such that, up to further decreasing $\vep>0$, the following holds.  If there exists a polynomial $Q$ of degree $k+2$ that is approximating for $u/U$ with $\| Q \|\leq 1$ and 
\[
\| u - UQ \|_{L^\infty(B_\l)} \leq \l^{k+2+\a+s},
\]
then there exists a polynomial $Q'$ of degree $k+2$ that is approximating for $u/U$ with
\begin{equation*}
\label{eqn: HOBH at scale rho lambda}
\| u - UQ' \|_{L^\infty(B_{\rho \l})} \leq (\rho\l)^{k+2+\a+s}
\end{equation*}
and
\[ 
\| Q' - Q\|_{L^\infty(B_\l)} \leq C\l^{k+2+\a}.
\]
for some constant $C = C(a,n,k,\a) >0$.}\\

For any $\l > 0$, define the rescalings
\begin{equation}
\label{eqn: rescaling}
\mathcal{P}_\l := \frac{1}{\l}\mathcal{P}, \qquad r_\l(X) := \frac{r(\l X)}{\l}, \qquad U_{a,\l}(X) := \frac{U_a(\l X)}{\l^s}, \qquad U_{\l}(X) := \frac{U(\l X)}{\l^s},
\end{equation}
and
\[
\tilde{u}(X) : = \frac{[u-UQ](\l X)}{\l^{k+2+\a+s}}.
\]
Thus, $\|\tilde{u}\|_{L^{\infty}(B_1)} \leq 1$, and by \eqref{eqn: approx poly error term HOBH}, we have that
\begin{equation}
\label{eqn: u tilde RHS}
|\La \tilde{u}(X)| \leq C \vep |y|^ar_{\l}^{s-1}|X|^{k+1+\alpha}.
\end{equation}
Let $w$ be the unique solution to
\begin{equation}
\label{eqn: a-har replacement}
\begin{cases}
\La w =0 &\text{in } B_1 \setminus \mathcal{P_\l } \\
w =0 &\text{on } \mathcal{P_\l} \\
w  = \tilde{u} &\text{on } \pa B_1.
\end{cases}
\end{equation}
Notice that $w$ is even in $y$ by the symmetry of the domain and boundary data and $\|w\|_{L^\infty(B_1)} \leq 1$ by the maximum principle.
Since $\mathcal{P_\l}$ has uniformly positive $\La$-capacity independently of $\vep$ and $\l$, $w$ is uniformly H\"older continuous in compact subsets of $B_1$.
So, letting $\bar w$ be the solution of \eqref{eqn: a-har replacement} in $B_1 \setminus \{ x_n \leq 0,\, y = 0 \}$,
by compactness, $w$ is uniformly close to $\bar w$ if $\vep$ is sufficiently small (universally so). Indeed, recall that $\G \to \{ x_n = 0,\, y = 0 \}$ in $C^{k+2,\alpha}$ as $\vep \to 0$.
Furthermore, thanks to \eqref{eqn: U and Ua}, we have that $U_\l \to \bar U_a$ uniformly as $\vep \to 0$.

Proposition~\ref{prop: Schauder flat boundary} ensures that
there exists a polynomial
\[
\bar{P}(X) := \bar{p}_{\mu m}x^\mu |z|^m 
\]
of degree $k+2$ such that $ \|\bar{P}\| \leq C$, $\bar{U}_a \bar{P}$ is $a$-harmonic in the set $B_1 \setminus \{x_n \leq 0,\, y = 0 \}$, and 
\[
\|\bar{w} - \bar{U}_a\bar{P}\|_{L^\infty(B_\rho)}  \leq  C\rho^{k+3+s}.
\]
Notice that the $a$-harmonicity of $\bar{U}_a \bar{P}$ implies that
\begin{equation}
\label{eqn: a har poly coeff}
(l+1)(l+2+2\sigma_n)\bar{p}_{\sigma,l+1} + 2s(\sigma_n + 1)\bar{p}_{\sigma+\bar{n},l} + (\sigma_i + 1)(\sigma_i+2)\bar{p}_{\sigma + 2\bar{\imath},l-1} = 0 \qquad \forall (\sigma, l).
\end{equation}
Therefore, choosing $\rho$ and then $\vep$ sufficiently small depending on $a,n,k$, and $\alpha$, we find that
\begin{equation}
\label{eqn: approx with flat HOBH}
\begin{split}
\|w - U_\l \tilde{P}\|_{L^\infty(B_\rho)}& 
\leq \|w - \bar{w}\|_{L^\infty(B_{1/2})} + \|U_\l \tilde{P} - \bar{U}_a\bar{P}\|_{L^\infty(B_{1/2})} + \|\bar{w} - \bar{U}_a\bar{P}\|_{L^\infty(B_\rho)} \\
&\leq \frac{1}{4} \rho^{k+2 + \a +s}
\end{split}
\end{equation}
where $\tilde{P}(X) := \bar{p}_{\mu m}x^\mu r_\l^m$ has the same coefficients as $\bar P$.
Now, set $v := \tilde{u} - w$.
From \eqref{eqn: u tilde RHS}, we find that
\[
\begin{cases}
|\La v| \leq \vep|y|^a r_\l^{s-1} &\text{in } B_1 \setminus \mathcal{P_\l} \\
v = 0 &\text{on } \partial B_1 \cup \mathcal{P_\l}.
\end{cases}
\]
From Lemma~\ref{lem: Simple Barriers} and \eqref{eqn: U and Ua}, we deduce that 
\begin{equation}
\label{eqn: barriers HOBH}
|v| \leq C\vep U_{a,\l} \leq C \vep U_\l.
\end{equation}

Then, combining \eqref{eqn: barriers HOBH} and \eqref{eqn: approx with flat HOBH} and further decreasing $\vep$ depending on $\rho, a,n,k$, and $\alpha$, we find that  
\[ 
\| \tilde{u} - U_\l\tilde{P} \|_{L^{\infty}(B_{\rho})} \leq \|v\|_{L^{\infty}(B_{\rho})} + \| w - U_\l \tilde{P} \|_{L^{\infty}(B_{\rho})} \leq \frac{1}{2}\rho^{k+2+\a +s}.
% \leq  C \vep + C\rho^{k+3+s}.
\]
Rescaling implies that
\[
\| u - U \tilde{Q}\|_{L^\infty(B_{\rho \l)}} \leq \frac{1}{2}(\rho \l)^{k+2+\a+s}
\]
with $\tilde{Q}(X):= Q(X) + \l^{k+2+\a}\tilde{P}(X/\l)$.

To conclude, we must alter $\tilde{Q}$ to make it an approximating polynomial for $u/U$ by replacing $\tilde{P}(X/\l)$ with another polynomial $P'(X/\l)$. As $Q$ is already an approximating polynomial for $u/U$, we need the coefficients $p'_{\mu m}$ of $P'$ to satisfy the system
\begin{equation}
\label{eqn: correction poly}
(l+1)(l+2+2\sigma_n) p'_{\sigma, l+1} + 2s(\sigma_n + 1) p'_{\sigma+\bar n, l} + (\sigma_i +1)(\sigma_i + 2) p'_{\sigma + 2\bar \imath, l-1} +  \tilde{c}_{\sigma l}^{\mu m} p'_{\mu m} = 0 \qquad \forall (\sigma, l)
\end{equation}
where
\[
\tilde{c}_{\sigma l}^{\mu m} : = \l^{|\sig| + l + 1 - |\mu| - m} c_{\sigma l}^{\mu m}.
\]
Notice that $|\tilde{c}_{\sigma l}^{\mu m}| \leq |c_{\sigma l}^{\mu m}| \leq C\vep$.
Furthermore, subtracting \eqref{eqn: correction poly} from \eqref{eqn: a har poly coeff}, we see that $P' - \tilde{P}$ solves the system \eqref{eqn: correction poly} with right-hand side
\[
A_{\sig l} = \tilde{c}_{\sigma l}^{\mu m}\bar{p}_{\mu m}.
\]
Hence, $|A_{\sig l}| \leq C\vep$, and choosing $p'_{\mu 0} = \bar{p}_{\mu 0}$, we uniquely determine $P'$ and find that
\[
\| P' - \tilde{P} \| \leq C\vep.
\]
Setting $Q'(X) := Q(X) + \l^{k+2+\a}P'(X/\l)$ concludes Step 1. \\

\noindent {\it Step 2: Iteration and Upgrade.} \\

Iterating Step 1, letting $\l = \rho^j$ for $j = 0, 1, 2, \dots $, we find a limiting approximating polynomial $P$ such that $\|P\| \leq C$ and
\[
\| u - UP \|_{L^{\infty}(B_{\rho^j})} \leq C\rho^{j(k+2+\a+s)} \qquad \forall j \in \N.
\]
To upgrade this inequality to 
\begin{equation}\label{eqn: upgraded}
|u - UP| \leq CU|X|^{k+2+\a},
\end{equation}
we argue as in Step 1 in $B_1 \setminus \mathcal{P}_\l$.
Setting
\[
\tilde{u}(X) := \frac{[u - UP](\l X)}{\l^{k+2 +\a +s}},
\]
we have that 
\[
|\tilde{u}| \leq |w| + |v| \leq C U_{a,\l} \leq CU_\l \quad\text{in } B_{1/2}.
\]
Indeed, that $v$ and $w$ are controlled by $U_{a,\l}$ comes from Lemma~\ref{lem: Simple Barriers} and an application of the boundary Harnack estimate (cf. Remark~\ref{rem: bHi and Ua}), while the last inequality comes from \eqref{eqn: U and Ua}.
Thus, after rescaling, we deduce that \eqref{eqn: upgraded} holds since $0 < \l \leq 1$ was arbitrary.
\end{proof}

Keeping Remark~\ref{rmk: ahar case} in mind, if $U$ is $a$-harmonic, then \eqref{eqn: grad x} and \eqref{eqn: grad r} hold.
So, a consequence of Proposition~\ref{prop: HOBH k pos} and Proposition~\ref{prop: HOBH 1 pos}, its $C^{1,\a}$ analogue, is the following full generalization of \cite[Theorem~2.3]{DS5}.

\begin{theorem} 
\label{thm: General DS5}
Suppose $ 0 \in \G := \partial_{\R^n} \P $ with $\G \in C^{k,\a}$ for $k \geq 1$ and $\| \G \|_{C^{k,\a}}\leq 1$. 
If $u$ and $U$ are even in $x_{n+1}$, $\|u\|_{L^\infty(B_1)} \leq 1$,
\[
\begin{cases}
\La u = |y|^a\frac{U_a}{r}f &\text{in } B_1 \setminus \mathcal{P}\\
u = 0 &\text{on } \mathcal{P}
\end{cases}
\]
for 
\[
f \in C^{k-1,\a}_{x,r}(\G \cap B_1) \qquad\text{with}\qquad \|f\|_{C^{k-1,\a}_{x,r}(\G \cap B_1)} \leq 1,
\]
and $U > 0$ is $a$-harmonic in $B_1 \setminus \P$ with $U(\nu(0)/2) = 1$, where $\nu$ is the outer unit normal to $\P$,
then
\[
\bigg\|\frac{u}{U}\bigg\|_{C^{k,a}_{x,r}(\G \cap B_{1/2})} \leq C
\]
for some $C = C(a,n,k,\a) > 0$.
\end{theorem}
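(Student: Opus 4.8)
The plan is to deduce Theorem~\ref{thm: General DS5} from Proposition~\ref{prop: HOBH k pos} and Proposition~\ref{prop: HOBH 1 pos} by a covering argument: one applies the pointwise estimates at every point of $\G \cap B_{1/2}$ and then assembles the pointwise $C^{k,a}_{x,r}$ bounds into a uniform one. First I would reduce to the normalized local picture of Section~\ref{subsec: Notation}: after a translation, rotation, and (harmless) dilation, we may assume $0 \in \G$, that $\G = \{(x',\gamma(x'),0)\}$ with $\gamma(0)=0$, $\nabla_{x'}\gamma(0)=0$, $\|\gamma\|_{C^{k,\a}}\le 1$, and that the coordinate system $(x,r)$ is the one attached to $\G$. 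Since $U$ is $a$-harmonic in $B_1 \setminus \P$, Remark~\ref{rmk: ahar case} tells us that \eqref{eqn: grad x} and \eqref{eqn: grad r} hold (with $T\equiv G\equiv 0$, so $P_0$ as in \eqref{eqn: U from schauder}), and Remark~\ref{rmk: nonzero constant} (applicable since we have dilated) shows the constant term of $P_0$ is nonzero, so $U \sim U_a$ in $B_{1/2}$. Likewise, writing the right-hand side $f = P_{X_0}(x,r) + O(|X-X_0|^{k-1+\a})$ at a free boundary point $X_0$, the polynomial $R := P_{X_0}$ has degree $k-1$ and $\|R\| \le C$ by the hypothesis $\|f\|_{C^{k-1,\a}_{x,r}} \le 1$, while $F(X) := |y|^{-a}\La u / |y|^a \cdot r / U_a - (U_a/r)^{-1}\cdots$—more precisely the remainder $\frac{U_a}{r}(f - R)$—satisfies $|F(X)| \le C r^{s-1}|X-X_0|^{k+\a}$ because $U_a/r \sim r^{s-1}$. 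Thus at each $X_0 \in \G \cap B_{1/2}$ the hypotheses of Proposition~\ref{prop: HOBH k pos} (when $k \ge 2$, i.e.\ $\G \in C^{k,\a}$ with $k-2 \ge 0$ in the notation there) or of Proposition~\ref{prop: HOBH 1 pos} (when $k=1$) are met after rescaling $B_1(X_0)$-sized balls around $X_0$ to $B_1$; note $k$ in the statement of this theorem corresponds to "$k+2$" in Proposition~\ref{prop: HOBH k pos}, so the order of the approximating polynomial comes out correctly.

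Next I would run the pointwise estimate at every $X_0 \in \G \cap B_{1/2}$. For each such $X_0$, after recentering and rescaling to unit scale and verifying $\|u\|_{L^\infty} \le 1$ (by the global normalization) and $U \sim U_a$ near $X_0$ (the boundary Harnack argument of Remark~\ref{rmk: nonzero constant} applied at $X_0$), Proposition~\ref{prop: HOBH k pos} or Proposition~\ref{prop: HOBH 1 pos} produces a polynomial $P_{X_0}(x,r)$ of degree $k$ with $\|P_{X_0}\| \le C$ and
\[
\Big|\frac{u}{U}(X) - P_{X_0}(x,r)\Big| \le C|X - X_0|^{k+\a}, \qquad X \in B_{c}(X_0),
\]
with $C$ universal. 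This is exactly the statement $u/U \in C^{k,a}_{x,r}(X_0)$ with $\|u/U\|_{C^{k,a}_{x,r}(X_0)} \le C$ uniformly in $X_0 \in \G\cap B_{1/2}$. The scaling of balls is uniform because $\|\gamma\|_{C^{k,\a}} \le 1$ controls the geometry of $\G$ (hence of $r$, $\nu$, $\k$) uniformly, and the normalization $U(\nu(0)/2) = 1$ together with the Harnack inequality for $L_a$ transfers to a uniform normalization $U(\nu(X_0)/2) \sim 1$ at nearby points.

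Finally, I would patch the pointwise estimates together. The norm $\|u/U\|_{C^{k,a}_{x,r}(\G \cap B_{1/2})}$ is, by definition (see Section~\ref{sec: Prelim}), the supremum over $X_0 \in \G \cap B_{1/2}$ of $\|u/U\|_{C^{k,a}_{x,r}(X_0)}$ together with the requirement that the tangent polynomials fit together—but having a uniform pointwise bound at every boundary point is precisely what that norm measures, so the conclusion $\|u/U\|_{C^{k,a}_{x,r}(\G \cap B_{1/2})} \le C$ follows directly. The only subtlety is matching the degree bookkeeping between the two propositions and ensuring the constant from Remark~\ref{rmk: nonzero constant} (hence the comparison $U \sim U_a$) is uniform over $X_0$; this holds because the smallness threshold $\vep$ there depends only on $a,n,k,\a$, and after the initial global dilation $\|\G\|_{C^{k,\a}}$ is below it everywhere in $B_{1/2}$. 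The main obstacle is therefore not the patching—which is essentially formal once the pointwise estimates are in hand—but rather verifying that the hypotheses of Proposition~\ref{prop: HOBH k pos} (in particular the validity of the gradient expansions \eqref{eqn: grad x}--\eqref{eqn: grad r} and the comparability $U \sim U_a$) hold uniformly at every free-boundary point $X_0$ and persist under the recentering-and-rescaling, which is exactly where the $a$-harmonicity of $U$ and Remark~\ref{rmk: ahar case} do the work.
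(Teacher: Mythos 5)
Your proposal is correct and follows exactly the route the paper intends: the paper states Theorem~\ref{thm: General DS5} as an immediate consequence of Propositions~\ref{prop: HOBH k pos} and~\ref{prop: HOBH 1 pos}, with the gradient expansions \eqref{eqn: grad x}--\eqref{eqn: grad r} supplied by Remark~\ref{rmk: ahar case} since $U$ is $a$-harmonic ($T \equiv G \equiv 0$), and with the right-hand side decomposed pointwise as $R = $ Taylor polynomial of $f$ plus a remainder absorbed into $F$ via $U_a/r \leq Cr^{s-1}$. Your degree bookkeeping ($k$ here equals $k+2$ there) and the uniform-in-$X_0$ patching are exactly the details the paper leaves implicit.
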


%~~~~~~~~~~~~~~~~~~~~~~~~~~~~~~~~~~~~~~~~~~~~~~~~~~~~~~~%
\section{Schauder Estimates: $\G \in C^{k+2,\alpha}$ for $k \geq 0$}
\label{sec: Schauder Cka}
 
In Proposition~\ref{prop: HOBH k pos}, we were crucially able to approximate $U$ in terms of $U_a P_0$, where $P_0 = P_0(x,r)$ is a polynomial of degree $k+1$. 
This approximation, \eqref{eqn: U from schauder}, is a consequence of the Schauder estimates of Proposition~\ref{prop: pointwise Schauder Cka} below. 
These Schauder estimates roughly say if $u$ satisfies
\[
\La u = |y|^a \frac{U_a}{r} f \quad\text{in } B_1 \setminus \P \qquad\text{and}\qquad u=0 \quad\text{on } \P,
\] then $u$ gains regularity in terms of the regularity of $f$ and $\G$.
More precisely, we find that $u/U_a \in C^{k+1,\a}_{x,r}(0)$ if $f \in C^{k,\alpha}_{x,r}(0)$ and $\G \in C^{k+2,\a}$.

\begin{proposition}
\label{prop: pointwise Schauder Cka}
Let $\G \in C^{k+2,\alpha}$ with $\| \G \|_{C^{k+2,\a}} \leq 1$.
Suppose $u\in C(B_1)$ is even in $y$ with $\|u\|_{L^{\infty}(B_1)}\leq 1$, vanishes on $\mathcal{P}$, and satisfies
\begin{equation*}
\La u(X) = |y|^a\bigg(\frac{U_a}{r}R(x,r)+ F(X)\bigg) \quad\text{in } B_1 \setminus \mathcal{P}
\end{equation*}
where $R(x,r)$ is a polynomial of degree $k$ with $\|R\| \leq 1$ and 
\begin{equation*}
|F(X)| \leq r^{s-1}|X|^{k + \alpha}.
\end{equation*}
Then, there exists a polynomial $P_0(x,r)$ of degree $k+1$ with $\|P_0\| \leq C$ such that 
\begin{equation*}
| u - U_a P_0 | \leq C U_a |X|^{k+1+\a}
\end{equation*}
and
\begin{equation*}
|\La (u - U_a P_0)| \leq C|y|^a r^{s-1}|X|^{k+\alpha}\quad\text{in } B_1 \setminus \mathcal{P}
\end{equation*}
for some constant $C = C(a,n,k,\a) >0$.
\end{proposition}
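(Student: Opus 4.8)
\textbf{Proof proposal for Proposition~\ref{prop: pointwise Schauder Cka}.}

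The plan is to run essentially the same perturbative-iteration scheme used in the proof of Proposition~\ref{prop: HOBH k pos}, but now producing an approximating polynomial for $u/U_a$ rather than for $u/U$, and with the flat-boundary case again supplied by Proposition~\ref{prop: Schauder flat boundary}. First I would set up the notion of an \emph{approximating polynomial} adapted to this situation: by the computation \eqref{eqn: La U P}--\eqref{eqn: poly coeffs HOBH} applied with $U = U_a$ (so $T \equiv G \equiv 0$ and the quantities $\nu,\k,d$ only contribute through their Taylor expansions, which carry the $C\vep$ smallness when $\|\G\|_{C^{k+2,\a}}\le\vep$), one gets for a polynomial $P_0(x,r)=p_{\mu m}x^\mu r^m$ of degree $k+1$ that
\[
L_a(U_aP_0) = |y|^a\Big(\tfrac{U_a}{r}A_{\sigma l}x^\sigma r^l + h(X)\Big),\qquad |h(X)|\le C\vep\|P_0\|r^{s-1}|X|^{k+\a},
\]
with the $A_{\sigma l}$ given by the same linear recursion \eqref{eqn: poly coeffs HOBH} with $c^{\mu m}_{\sigma l}$ of size $C\vep$; call $P_0$ approximating for $u/U_a$ if these $A_{\sigma l}$ match the coefficients of $R$. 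As before, the recursion lets one solve uniquely for $p_{\sigma, l+1}$ in terms of $A_{\sigma l}$, lower-degree coefficients, and the constant-in-$r$ data $p_{\mu 0}$; so given $R$ and a choice of $p_{\mu 0}$ the approximating polynomial exists and has $\|P_0\|\le C$.

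Next, after an initial dilation to impose the smallness \eqref{eqn: smallness assumption} on $\|\G\|_{C^{k+2,\a}}$, $\|R\|$, $|F|$, and after subtracting the initial approximating polynomial $Q^0$ (so that $\|u - U_aQ^0\|_{L^\infty(B_1)}\le 1$ and $|L_a[u-U_aQ^0]|\le\vep|y|^a r^{s-1}|X|^{k+\a}$), I would prove the one-step improvement: if $Q$ is approximating for $u/U_a$ with $\|Q\|\le 1$ and $\|u-U_aQ\|_{L^\infty(B_\l)}\le\l^{k+1+\a+s}$, then there is an approximating $Q'$ with $\|u-U_aQ'\|_{L^\infty(B_{\rho\l})}\le(\rho\l)^{k+1+\a+s}$ and $\|Q'-Q\|_{L^\infty(B_\l)}\le C\l^{k+1+\a}$. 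This is done exactly as in Step~1 of Proposition~\ref{prop: HOBH k pos}: rescale $\tilde u(X)=[u-U_aQ](\l X)/\l^{k+1+\a+s}$; replace $\tilde u$ by the $a$-harmonic $w$ with the same boundary data on $B_1\setminus\P_\l$; use uniform H\"older estimates (positive $L_a$-capacity of $\P_\l$) plus compactness to get $w$ close to the flat-slit solution $\bar w$, and $U_{a,\l}\to\bar U_a$; apply Proposition~\ref{prop: Schauder flat boundary} to $\bar w$ to get a degree-$(k+1)$ polynomial $\bar P$ with $\bar U_a\bar P$ $a$-harmonic and $\|\bar w-\bar U_a\bar P\|_{L^\infty(B_\rho)}\le C\rho^{k+2+s}$; control the remainder $v=\tilde u-w$ by $C\vep U_{a,\l}$ via Lemma~\ref{lem: Simple Barriers}; and finally correct $\bar P$ to a genuinely approximating polynomial $P'$ (with $\|P'-\bar P\|\le C\vep$) by subtracting the flat recursion \eqref{eqn: a har poly coeff} from \eqref{eqn: poly coeffs HOBH}, whose residual right-hand side $\tilde c^{\mu m}_{\sigma l}\bar p_{\mu m}$ is $O(\vep)$. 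Iterating along $\l=\rho^j$ yields a limiting approximating polynomial $P_0$ with $\|P_0\|\le C$ and $\|u-U_aP_0\|_{L^\infty(B_{\rho^j})}\le C\rho^{j(k+1+\a+s)}$, and the usual one-more-rescaling argument (bounding the rescaled error by $CU_{a,\l}\le CU_a$ through $w$ and $v$) upgrades this to $|u-U_aP_0|\le CU_a|X|^{k+1+\a}$. The estimate $|L_a(u-U_aP_0)|\le C|y|^ar^{s-1}|X|^{k+\a}$ then follows by combining the hypothesis on $L_a u$ with the bound $|h(X)|\le C\|P_0\|r^{s-1}|X|^{k+\a}$ on $L_a(U_aP_0)$ coming from \eqref{eqn: La U P}--\eqref{eqn: I,II,III} (this time not needing the $\vep$-smallness, only $\|\G\|_{C^{k+2,\a}}\le 1$).

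I expect the main obstacle to be the same delicate point that recurs throughout this circle of ideas: justifying the compactness step rigorously, i.e. that solutions $w$ of the Dirichlet problem on $B_1\setminus\P_\l$ converge, as $\vep\to 0$ (equivalently as $\G$ flattens in $C^{k+2,\a}$), to the solution $\bar w$ on the flat slit, uniformly on compact subsets, and simultaneously that $U_{a,\l}\to\bar U_a$ and the coordinate $r_\l\to|z|$ with enough uniformity that the polynomial identities transfer. This rests on the uniform $L_a$-capacity lower bound for $\P_\l$ (hence uniform interior H\"older continuity from the boundary Harnack / De Giorgi--Nash--Moser machinery for the $A_2$-weight $|y|^a$) together with stability of the slit under $C^{k+2,\a}$ perturbations; the bookkeeping needed to see that the error terms genuinely carry factors of $\vep$ (so that $\vep$ can be absorbed after $\rho$ is fixed) is where care is required, but structurally it is identical to Proposition~\ref{prop: HOBH k pos}. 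A minor secondary point is verifying that the linear system \eqref{eqn: poly coeffs HOBH} remains solvable for a polynomial of degree $k+1$ (rather than $k+2$) with the right-hand side $R$ of degree $k$ — the index ranges shift by one but the triangular structure in $(|\mu|+m)$ is unchanged, so no new difficulty arises there.
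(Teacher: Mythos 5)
Your proposal is correct and follows essentially the same route as the paper: the paper itself sets up the approximating-polynomial notion for $u/U_a$ (degree $k+1$, with the linear system \eqref{eqn: approx poly coeff} and error $|h|\le \vep\|P\||X|^{k+\a}$, cf.\ Remark~\ref{rem: U v. Ua expansions}) and then declares the proof identical to that of Proposition~\ref{prop: HOBH k pos} with $U$ replaced by $U_a$, which is precisely the scheme you describe. Your added observation that the final estimate on $\La(u-U_aP_0)$ follows from the hypothesis together with the bound on $\La(U_aP_0)$ is the intended (and correct) way to obtain the second conclusion.
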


To prove Proposition~\ref{prop: pointwise Schauder Cka}, we must first extend the appropriate notion of approximating polynomial to this setting.
We compute that
\begin{equation*}
\begin{split}
\La (U_a x^\mu r^m) &= |y|^a \frac{U_a}{r} \Big(-( dm+sr)\kappa x^\mu r^{m-1} + m(m+1)x^\mu r^{m-1} \\
&\hspace{2.0cm}+ 2r^{m-1}(dm + sr) \nu \cdot \nabla_x x^\mu + \mu_i(\mu_i - 1)x^{\mu-2\bar{\imath}}r^{m+1} \Big).
\end{split}
\end{equation*}
Each of the functions $\nu_i, \kappa$, and $d$ can be written as the sum of a degree $k$ polynomial in $x$ and a $C^{k,\alpha}$ function in $x$ whose derivatives vanish up to order $k$. 
The lowest degree terms in the Taylor expansion at zero of $\nu_i, \kappa$, and $d$ are $\delta_{i n}, \kappa(0)$, and $x_n$ respectively. 
Hence, grouping terms by degree up to order $k$ and the remainder, we see that
\begin{equation*}
\begin{split}
\La (U_a x^\mu r^m) &= |y|^a \frac{U_a}{r} \Big(m(m+1+2\mu_n)x^\mu r^{m-1} +2s\mu_nx^{\mu-\bar{n}}r^{m}\\
&\hspace{2.0cm}+ \mu_i(\mu_i - 1)x^{\mu-2\bar{\imath}}r^{m+1} + c_{\sigma l}^{\mu m}x^\sigma r^l + h^{\mu m}(x,r) \Big).
\end{split}
\end{equation*}
Here,
$c_{\sigma l}^{\mu m} \neq 0$ only if $|\mu|+m \leq |\sigma|+l \leq k$.
Also,
\[
h^{\mu m}(x,r) := r^m h^{\mu}_{m}(x) + m r^{m-1}h^{\mu}_{m-1}(x),
\]
and $h^{\mu}_{m}, h^{\mu}_{m-1} \in C^{k,\alpha}(B_1^*)$ have vanishing derivatives up to order $k-m$ and $k - (m-1)$ at zero respectively.
The coefficients $c_{\sigma l}^{\mu m}$ are all linear combinations of the Taylor coefficients at the origin of $\kappa, d \kappa, \nu_i$, and $ d\nu_i$, which vanish if $\G$ is flat. 
After a dilation making $\|\G\|_{C^{k+2,\alpha}} \leq \vep$, we may assume that
\[
|c_{\sigma l}^{\mu m}| \leq \vep, \qquad \|h^{\mu}_{m}\|_{C^{k,\alpha}(B_1^*)} \leq \vep, \qquad\text{and}\qquad \|h^{\mu}_{m-1}\|_{C^{k,\alpha}(B_1^*)} \leq \vep.
\]
Therefore, if $P(x,r) = p_{\mu m}x^\mu r^m$ is a polynomial of degree $k+1$, then
\[
\La (U_aP) = |y|^a\frac{U_a}{r}\Big(A_{\sigma l}x^{\sigma}r^l + h(x,r)\Big)
\]
where $|\sigma| + l \leq k$,
\begin{equation}
\label{eqn: approx poly coeff}
A_{\sigma l} = (l+1)(l+2+2\sigma_n)p_{\sigma,l+1} + 2s(\sigma_n + 1)p_{\sigma+\bar{n},l} + (\sigma_i + 1)(\sigma_i+2)p_{\sigma + 2\bar{\imath},l-1} + c_{\sigma l}^{\mu m}p_{\mu m},
\end{equation}
and
\[
h(x,r) := \sum_{m=0}^k r^m h_m(x)
\]
for $h_m \in C^{k,\alpha}(B_1^*)$ with vanishing derivatives up to order $k-m$ at zero.
Assuming that $\|\G\|_{C^{k+2,\alpha}} \leq \vep$, we have
\begin{equation*}
|h(X)| \leq \vep \|P\||X|^{k+\alpha}.
\end{equation*}
Considering \eqref{eqn: approx poly coeff}, we see that $p_{\sigma,l+1}$ can be expressed in terms of $A_{\sigma l}$, a linear combination of $p_{\mu m}$ for $\mu + m \leq |\sigma| + l $, and a linear combination of $p_{\mu m}$ for $\mu + m \leq |\sigma| + l $ and $m \leq l$.
Thus, the coefficients $p_{\mu m}$ are uniquely determined by the linear system \eqref{eqn: approx poly coeff} given $A_{\sigma l}$ and $p_{\mu 0}$.

\begin{definition}
Let $u$ be as in Proposition~\ref{prop: pointwise Schauder Cka}.
We call a polynomial $P(x,r)$ of degree $k+1$ {\it approximating} for $u/U_a$ if the coefficients $A_{\sigma l}$ coincide with the coefficients of $R$.
\end{definition}

\begin{remark}
\label{rem: U v. Ua expansions}
Observe that 
\[
L_a U_a =  -|y|^a \frac{U_a}{r} s \k(x)
\qquad\text{and}\qquad
L_a U = |y|^a \bigg(\frac{U_a}{r}T(x,r) + G(X)\bigg).
\]
Since $\G \in C^{k+2,\a}$, the mean curvature $\k = \k(x)$ does not possess enough regularity to yield the same order error as $G$ after being expanded.
Indeed, letting $g := \k - T$ where $T$ is the $k$th order Taylor polynomial of $\k$ at the origin, we see that
\[
\frac{U_a}{r}|g(X)| \leq r^{s-1}|X|^{k+\a} \qquad\text{while}\qquad |G(X)| \leq r^{s-1}|X|^{k+1+\a}.
\]
This discrepancy lies at the heart of the difference in approximating $u/U_a$ and $u/U$.
\end{remark}

With the correct notion of approximating polynomial in hand, the proof of Proposition~\ref{prop: pointwise Schauder Cka} is now identical to that of Proposition~\ref{prop: HOBH k pos} upon replacing $U$ with $U_a$; it is therefore omitted.

%~~~~~~~~~~~~~~~~~~~~~~~~~~~~~~~~~~~~~~~~~~~~~~~~~~~~~~~%
\section{The Low Regularity Case: $\G \in C^{1,\a}$}
\label{sec: C1a}
 
The goal of this section is to prove Proposition~\ref{prop: HOBH 1 pos}, which extends the higher order boundary Harnack estimate of Proposition~\ref{prop: HOBH k pos} to the case when $\G$ is only of class $C^{1,\a}$.
In this case, the functions $r$ and $U_a$ introduced in Section~\ref{subsec: Notation} do not possess enough regularity to directly extend the proof of Proposition~\ref{prop: pointwise Schauder Cka} or the notion of approximating polynomial for $u/U$ in Definition~\ref{def: approx poly}. 
Following \cite{DS5}, this is rectified by working with regularizations of $r$ and $U_a$, denoted by $r_*$ and $U_{a,*}$ respectively. 
The following lemma contains estimates which will allow us to replace $r$ and $U_a$ by their regularizations when needed. 
The construction and the proofs of these estimates can be found in the Appendix. 

\begin{lemma}
\label{lem: Approximations}
Let $\| \G\|_{C^{1,\a}} \leq 1$.
There exist smooth functions $r_*$ and $U_{a,*}$ such that 
\[
\bigg| \frac{r_*}{r} -1\bigg| \leq C_{*} r^\a, \qquad \bigg| \frac{U_{a,*}}{U_a} - 1\bigg|\leq C_{*} r^\a,
\]
\[
|\na r_* - \na r| \leq C_{*} r^\a, \qquad |\pa_y r_* - \pa_y r| \leq C_{*}|y|^a U_a r^{s -1 +\a}, \qquad \bigg|\frac{|\na U_{a,*}|}{|\na U_a |} - 1\bigg| \leq C_{*} r^\a,
\]
\[
\bigg|L_a r_* - \frac{2(1-s)|y|^a}{r} \bigg| \leq C_{*}|y|^a r^{\a -1}, \qquad\text{and}\qquad | \La U_{a,*}| \leq C_{*}|y|^a r^{s - 2 + \a}
\]
where $C_{*} = C_{*}(a,n,\a) > 0$.
If $\| \G\|_{C^{1,\a}} \leq \vep$, then each inequality holds with the right-hand side multiplied by $\vep$.
\end{lemma}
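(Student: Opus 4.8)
The plan is to construct $r_*$ and $U_{a,*}$ explicitly from a regularized version of the distance function $d$ on $\R^n$, following the regularized-distance construction used in the appendix of \cite{DS5}. Since $\G\in C^{1,\a}$ with $\|\G\|_{C^{1,\a}}\le 1$, write $\G$ locally as the graph $\{x_n=\g(x')\}$ with $\g\in C^{1,\a}(B_1')$. For $x\notin\G$, let $\g_{(x)}$ denote the mollification of $\g$ at scale comparable to $|d(x)|$, and let $d_*(x)$ be a smooth realization of the signed distance (with the sign convention of $d$) to the graph of $\g_{(x)}$; equivalently, $d_*$ arises as the solution of an implicit Lieberman-type equation. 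The mollification bounds $\|\g_\tau-\g\|_{L^\infty}\le C\tau^{1+\a}$, $\|\nab\g_\tau-\nab\g\|_{L^\infty}\le C\tau^{\a}$, and $\|\nab^j\g_\tau\|_{L^\infty}\le C\tau^{1+\a-j}$ for $j\ge 1$ then yield a function $d_*$, depending only on $x$ and smooth on $B_1\setminus\G$, satisfying
\[
|d_*-d|\le C|d|^{1+\a},\qquad |\nab_x d_*-\nu|\le C|d|^{\a},\qquad |\nab^2_x d_*|\le C|d|^{\a-1},
\]
with $\k_*:=-\Delta_x d_*$ obeying $|\k_*|\le C|d|^{\a-1}$; when $\|\G\|_{C^{1,\a}}\le\vep$, each right-hand side carries an extra factor $\vep$.

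Next, set $r_*:=(y^2+d_*^2)^{1/2}$ and $U_{a,*}:=\big(\tfrac{r_*+d_*}{2}\big)^s$, mirroring \eqref{eqn: Ua def}. Since $d_*$ has the sign of $d$ and vanishes only on $\G$, we have $y^2+d_*^2>0$ on $B_1\setminus\G$, so $r_*$ and $U_{a,*}$ are smooth there. Because $|d|\le r$, the first bound above gives $|r_*^2-r^2|=|d_*^2-d^2|\le Cr^{2+\a}$, hence $|r_*/r-1|\le Cr^{\a}$ and $|r_*-r|\le Cr^{1+\a}$; since $r_*+d_*\ge c(r+d)$, one similarly gets $|U_{a,*}/U_a-1|\le Cr^{\a}$. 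For the gradients, from $\nab_x r_*=(d_*/r_*)\nab_x d_*$ and $\pa_y r_*=y/r_*$, compared with $\nab_x r=(d/r)\nu$ and $\pa_y r=y/r$, the estimates on $d_*$ give $|\nab r_*-\nab r|\le Cr^{\a}$ and $|\pa_y r_*-\pa_y r|=|y|\,|r-r_*|/(rr_*)\le C|y|r^{\a-1}$, and the latter is $\le C_*|y|^aU_ar^{s-1+\a}$ because $|y|^aU_ar^{s-1}\ge c|y|/r$ (using $a=1-2s$ and \eqref{eqn: Ua property}). Differentiating $U_{a,*}$ and using $y^2=(r_*-d_*)(r_*+d_*)$ produces a closed-form expression for $|\nab U_{a,*}|$ parallel to the one for $|\nab U_a|$; comparing term by term gives $\big||\nab U_{a,*}|/|\nab U_a|-1\big|\le Cr^{\a}$.

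For the two operator estimates, compute $L_a r_*=|y|^a\Delta r_*+a|y|^{a-1}\mathrm{sgn}(y)\pa_y r_*$. The second term equals $a|y|^a/r_*$; a direct calculation gives $\Delta r_*=\frac{d_*^2+|\nab_x d_*|^2y^2+d_*(\Delta_x d_*)\,r_*^2}{r_*^3}$, which the bounds on $d_*$ show differs from $1/r$ by at most $Cr^{\a-1}$. Adding the two contributions and recalling $a+1=2(1-s)$ yields $\big|L_a r_*-\tfrac{2(1-s)|y|^a}{r}\big|\le C|y|^ar^{\a-1}$. For $U_{a,*}$, the same computation that gives $L_aU_a=-|y|^a\tfrac{U_a}{r}s\k$ in Remark~\ref{rem: U v. Ua expansions}, carried out with $\G$ replaced by the regularized surface, yields $L_aU_{a,*}=-|y|^a\tfrac{U_{a,*}}{r_*}s\k_*+(\text{error})$, where both the main term and the error are bounded by $C|y|^ar^{s-1}\cdot r^{\a-1}=C|y|^ar^{s-2+\a}$ using $|\k_*|\le Cr^{\a-1}$. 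Tracking the $\vep$-dependence through each estimate gives the $\vep$-scaled versions.

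The crux of the argument — and the step I expect to be the main obstacle — is the construction in the first paragraph: producing a \emph{single} smooth function $d_*$ that is simultaneously comparable to the signed distance, has gradient within $Cr^\a$ of $\nu$, and has second derivatives blowing up no faster than $r^{\a-1}$, i.e. a genuine regularized distance for the merely $C^{1,\a}$ surface $\G$. Once that is in place, the key point is that this one bound $|\nab^2 d_*|\le Cr^{\a-1}$ is precisely what is needed to control $\Delta r_*$ and $L_aU_{a,*}$ to the required order $r^{\a-1}$; everything else is chain-rule bookkeeping. Because of its length and technical nature, this construction is deferred to the Appendix.
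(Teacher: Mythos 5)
Your overall strategy (regularize the signed distance $d$, then set $r_*=(y^2+d_*^2)^{1/2}$ and $U_{a,*}=((r_*+d_*)/2)^s$ and push the estimates through the chain rule) is the same as the paper's, but your choice of regularization scale is wrong, and this is not a cosmetic issue: it breaks both the smoothness claim and one of the estimates.

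You mollify $\g$ (equivalently $d$) at scale comparable to $|d(x)|$, obtaining $|\nab^2_x d_*|\le C|d|^{\a-1}$. The scale that is actually needed is comparable to $r(X)=(y^2+d^2)^{1/2}$, the distance to $\G$ in $\R^{n+1}$, and the needed second-derivative bound is $|\nab^2_x d_*|\le Cr^{\a-1}$ --- which you yourself state in your final paragraph. Since $|d|\le r$ and $\a-1<0$, the bound $|d|^{\a-1}$ you produce is strictly \emph{weaker} than $r^{\a-1}$, and the two diverge precisely on the cylinder $\{x\in\G,\ y\neq 0\}$, which lies in the interior of $B_1\setminus\P$. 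Two concrete consequences. First, a $d_*$ built at scale $|d(x)|$ degenerates on all of $\{d=0\}$, so $r_*$ and $U_{a,*}$ fail to be smooth (indeed fail to be $C^2$) at every point $(x,y)$ with $x\in\G$ and $y\neq 0$; the lemma's conclusion and the later computations of $L_a(UP(x,r_*))$ require classical second derivatives there. Second, the estimate $|\La U_{a,*}|\le C|y|^a r^{s-2+\a}$ genuinely fails: unlike $\Delta r_*$, where $\Delta_x d_*$ only ever appears multiplied by $d_*$ (so that $|d_*\Delta_x d_*|\le C|d|^\a\le Cr^\a$ saves you), in $\La U_{a,*}$ the term $\Delta_x d_*$ enters bare --- e.g.\ through $\La\big(U_{a,*}^{1/s}\big)=\tfrac12\big(\La r_*+|y|^a\Delta_x d_*\big)$ in the region $\{d\ge -r/2\}$ --- producing a contribution of size $|y|^a r^{s-1}|d|^{\a-1}$, which is not $O(|y|^a r^{s-2+\a})$ when $|d|\ll r$.

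The paper's construction is designed exactly to avoid this: it mollifies $d$ at the \emph{fixed} scale $\l_k=4^{-k}$ on the dyadic annulus $\{\l_k/2<r<4\l_k\}$ (where $\l_k\approx r$ regardless of how small $|d|$ is), proves the estimates annulus by annulus, and glues the pieces with smooth cutoffs $\psi(r_\l/\l)$; the resulting $r_*$, $U_{a,*}$ are smooth on $\{r>0\}$ and satisfy $|\nab^2 d_{\l_k}|\le C\l_k^{\a-1}\approx Cr^{\a-1}$. A secondary point you gloss over: the estimates for $U_{a,*}$ must be proved separately in $\{d\ge -r/2\}$ and $\{d<-r/2\}$, using the two representations $U_a=((r+d)/2)^s$ and $U_a=|y|^{2s}/(2^s(r-d)^s)$, since near the slit $U_a$ and $|\na U_a|$ degenerate and the additive bounds on $\na U_{a,*}-\na U_a$ do not directly give the multiplicative one.
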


The following pointwise Schauder estimate plays the role of Proposition~\ref{prop: pointwise Schauder Cka} in the case when $\G$ is $C^{1,\a}.$
\begin{proposition}
\label{prop: C1a ptwise Schauder}
Let $\G \in C^{1,\a}$ with $\| \G\|_{C^{1,\a}} \leq 1$.
Suppose $u \in C(B_1)$ is even in $y$ with $\|u\|_{L^\infty(B_1)} \leq 1$, vanishes on $\mathcal{P}$, and satisfies
\begin{equation}
\label{eqn: La u C1a schauder}
|\La u| \leq |y|^a r^{s-2+\a} \quad \text{in } B_1 \setminus \mathcal{P}.
\end{equation}
Then, there exists a constant $p'$ with $|p'| \leq C$ such that 
\[ 
|u-p' U_a| \leq CU_a|X|^\a
\]
for some constant $C = C(a,n,\a) > 0$.
\end{proposition}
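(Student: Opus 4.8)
The plan is to run the perturbation-plus-compactness scheme of the proof of Proposition~\ref{prop: HOBH k pos} in the lowest regularity setting, comparing $u$ against the regularization $U_{a,*}$ of $U_a$ furnished by Lemma~\ref{lem: Approximations} rather than against $U_a$ itself. Since $|\La U_{a,*}| \leq C_*|y|^a r^{s-2+\a}$ is of the same order as the bound assumed on $\La u$, the quantity $\La(U_{a,*}\cdot\mathrm{const})$ is entirely absorbed into the error term: there is no linear system to solve, ``approximating'' simply means ``any constant'', and there is no correction step. First I would dilate: because $r^{s-2+\a}$ gains a positive power of the dilation factor, one may assume simultaneously that $\|\G\|_{C^{1,\a}} \leq \vep$ (so every estimate in Lemma~\ref{lem: Approximations} holds with $C_*$ replaced by $C_*\vep$), that $|\La u| \leq \vep|y|^a r^{s-2+\a}$, and that $\|u\|_{L^\infty(B_1)} \leq 1$; then for any constant $p$ with $|p|\leq M$, $M$ a fixed universal bound, one has $|\La(u-pU_{a,*})| \leq C\vep|y|^a r^{s-2+\a}$.

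The heart of the argument is an iteration at dyadic scales $\l = \rho^j$: I would produce constants $p_j$ with $|p_{j+1}-p_j|\leq C\rho^{j\a}$ and $\|u-p_jU_{a,*}\|_{L^\infty(B_{\rho^j})}\leq \rho^{j(s+\a)}$, so that $p_j\to p'$ with $|p'|\leq C$. For the improvement step at scale $\l$, set $\tilde u(X):=[u-p_jU_{a,*}](\l X)/\l^{s+\a}$; then $\|\tilde u\|_{L^\infty(B_1)}\leq 1$, $\tilde u$ vanishes on $\P_\l:=\l^{-1}\P$, and $|\La\tilde u|\leq C\vep|y|^a r_\l^{s-2+\a}$. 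Let $w$ solve $\La w=0$ in $B_1\setminus\P_\l$ with $w=0$ on $\P_\l$ and $w=\tilde u$ on $\pa B_1$; then $w$ is even in $y$, $\|w\|_{L^\infty(B_1)}\leq 1$, and (by the uniform $\La$-capacity of $\P_\l$) uniformly H\"older on compact subsets of $B_1$. As $\vep\to 0$, $w$ is uniformly close in $B_{1/2}$ to the solution $\bar w$ of the flat-slit problem with the same boundary data, while $U_{a,*,\l}\to\bar{U}_a$ uniformly by Lemma~\ref{lem: Approximations}. Proposition~\ref{prop: Schauder flat boundary} with $k=0$ then produces a constant $\bar p$, $|\bar p|\leq C$, with $|\bar w-\bar p\bar{U}_a|\leq C\bar{U}_a|X|\leq C|X|^{1+s}$.

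The remaining point, and the one I expect to be the main obstacle, is to control $v:=\tilde u-w$, which satisfies $|\La v|\leq C\vep|y|^a r_\l^{s-2+\a}$ in $B_1\setminus\P_\l$ and vanishes on $\pa B_1\cup\P_\l$; here the source $r^{s-2+\a}$ is too singular for Lemma~\ref{lem: Simple Barriers}, which only handles $r^{s-1}$. I would split into two complementary cases. If $s+\a\geq 1$, then $r_\l^{s-2+\a}\leq r_\l^{s-1}$ throughout $B_1$, so Lemma~\ref{lem: Simple Barriers} applies directly and gives $|v|\leq C\vep U_{a,\l}$. If $s+\a<1$, I would instead use the barrier $\phi=C\vep(U_{a,*,\l}-c\,U_{a,*,\l}^{1+\a/s})$, with the universal constant $c$ chosen (using that $U_{a,*,\l}$ is bounded in $B_1$) so that $\phi\geq 0$ and $\phi=0$ on $\pa B_1\cup\P_\l$: the dominant term of $\La(U_{a,*,\l}^{1+\a/s})$ is $\tfrac{\a(s+\a)}{s^2}|y|^a U_{a,*,\l}^{\a/s-1}|\nab U_{a,*,\l}|^2$, and combining Lemma~\ref{lem: Approximations} with the identity $|\nab U_a|^2=\tfrac{s^2}{2^{2s-1}}(r+d)^{2s-1}/r$ (which follows from $\nab_x r=\tfrac dr\nu$, $\nab_x d=\nu$, $\pa_y r=\tfrac yr$, and $(r+d)^2+y^2=2r(r+d)$) shows this term is bounded below by $c|y|^a r_\l^{s-2+\a}$ precisely because $s+\a-1<0$ makes $(r+d)^{s+\a-1}$ large near $\P_\l$. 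Hence $\La\phi\leq -C\vep|y|^a r_\l^{s-2+\a}$ and $\phi\leq C\vep U_{a,*,\l}$, and the comparison principle yields $|v|\leq C\vep U_{a,\l}$. In both cases, choosing first $\rho$ (using $1+s>s+\a$) and then $\vep$ small, the triangle inequality across $\|v\|_{L^\infty(B_\rho)}$, $\|w-\bar w\|_{L^\infty(B_\rho)}$, $\|\bar w-\bar p\bar{U}_a\|_{L^\infty(B_\rho)}$, and $|\bar p|\|\bar{U}_a-U_{a,*,\l}\|_{L^\infty(B_\rho)}$ gives $\|\tilde u-\bar pU_{a,*,\l}\|_{L^\infty(B_\rho)}\leq\tfrac12\rho^{s+\a}$; rescaling back sets $p_{j+1}:=p_j+\l^\a\bar p$.

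Finally, to upgrade the dyadic bounds $\|u-p'U_{a,*}\|_{L^\infty(B_{\rho^j})}\leq C\rho^{j(s+\a)}$ to the pointwise estimate, I would argue as in Step~2 of the proof of Proposition~\ref{prop: HOBH k pos}: rescaling $u-p'U_{a,*}$ to unit scale and writing $\tilde u=w+v$, the boundary Harnack estimate (cf. Remark~\ref{rem: bHi and Ua}) bounds $|w|\leq CU_{a,\l}$ while the barrier above bounds $|v|\leq C\vep U_{a,\l}$, so $|\tilde u|\leq CU_{a,\l}$ in $B_{1/2}$; rescaling back yields $|u-p'U_{a,*}|\leq CU_{a,*}|X|^\a$, and then $|u-p'U_a|\leq CU_a|X|^\a$ follows from Lemma~\ref{lem: Approximations} and $r\leq|X|$, which is the asserted estimate.
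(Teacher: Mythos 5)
Your overall scheme is exactly the paper's: dilate so that $\|\G\|_{C^{1,\a}}$ and the right-hand side are $\vep$-small, run an improvement-of-flatness iteration at dyadic scales by comparing the $a$-harmonic replacement $w$ of $\tilde u$ with the flat-slit solution $\bar w$ via compactness and Proposition~\ref{prop: Schauder flat boundary} with $k=0$, control $v=\tilde u-w$ by a barrier built from $U_{a,*}$, and upgrade the dyadic decay to the pointwise bound using the boundary Harnack estimate together with Lemma~\ref{lem: Approximations}. Your barrier $U_{a,*}-c\,U_{a,*}^{1+\a/s}$ in the case $s+\a<1$ is precisely the content of the paper's Lemma~\ref{lem: C1a barrier} (proved in the Appendix with $\beta=1+\a/s$), and your computation of its dominant term, including the identity $|\nab U_a|^2=s^2U_a^{2-1/s}/r$, is correct.

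The one genuine flaw is the first branch of your case split. When $s+\a\geq 1$ you claim $r^{s-2+\a}\leq r^{s-1}$ throughout $B_1$; for $r<1$ this would require $s-2+\a\geq s-1$, i.e.\ $\a\geq 1$, which never holds --- you appear to have compared $s-2+\a$ with $-1$ rather than with $s-1$. Near $\P$ the source $r^{s-2+\a}$ is always strictly more singular than $r^{s-1}$, so Lemma~\ref{lem: Simple Barriers} never applies directly; and, as your own computation shows, the barrier of the second branch also fails when $s+\a\geq 1$, since $U_{a,*}^{(s+\a-1)/s}$ then degenerates rather than blows up at $\P$. So the case $s+\a\geq 1$ is left unhandled. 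The paper sidesteps this by stating Lemma~\ref{lem: C1a barrier} only for $\a\in(0,1-s)$; the standard repair, which suffices for every later application in the paper, is to replace $\a$ at the outset by some $\a'\leq\min\{\a,(1-s)/2\}$, noting that $r^{s-2+\a}\leq r^{s-2+\a'}$ for $r\leq1$ and accepting the correspondingly weaker conclusion $|u-p'U_a|\leq CU_a|X|^{\a'}$. With that modification your argument is complete and coincides with the paper's proof.
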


Note that even though Proposition~\ref{prop: C1a ptwise Schauder} is stated just at the origin, it holds uniformly at all points $\G \cap B_{1/2}$ since the assumption on the right-hand side in \eqref{eqn: La u C1a schauder} does not distinguish the origin.
%In order to prove Propositions~\ref{prop: HOBH 1 pos} and \ref{prop: C1a ptwise Schauder}, as in \cite{DS5}, we replace $r$ and $U_a$ with regularizations when the operator must be computed.
The proof of Proposition~\ref{prop: C1a ptwise Schauder} is quite similar to that of Propositions~\ref{prop: HOBH k pos} and \ref{prop: pointwise Schauder Cka}, but we include it to demonstrate how $U_{a,*}$ is used.
In the proof, we will make use of the following lemma, whose proof via a barrier argument is given in the Appendix. 

\begin{lemma}
\label{lem: C1a barrier} 
Assume $\| \G \|_{C^{1,\a}} \leq \vep$ with $\a \in (0, 1-s)$, and suppose $u$ satisfies 
\[
\begin{cases} |\La u| \leq |y|^a r^{\a -2 +s} &\text{in } B_1 \setminus \mathcal{P}\\
u = 0 &\text{on } \pa B_1 \cup \mathcal{P}.
\end{cases}
\]
If $\vep > 0$ is sufficiently small, depending on $a$, $n$, and $\a$, then
\[
|u| \leq C U_a
\]
for some $C = C(a,n,\a) > 0$.
\end{lemma}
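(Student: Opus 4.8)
\textbf{Proof plan for Lemma~\ref{lem: C1a barrier}.}
The plan is to construct explicit upper and lower barriers using the regularized distance $r_*$ and the regularized function $U_{a,*}$ from Lemma~\ref{lem: Approximations}, exploiting the fact that $\a < 1-s$ so that the exponent $s+\a$ stays below $1$. Since $u$ vanishes on $\pa B_1 \cup \P$, it suffices by the comparison principle for $\La$ (valid because $|y|^a$ is an $A_2$ weight) to build a function $\Phi \geq 0$ with $\Phi = 0$ on $\P$, $\Phi \gtrsim 1$ on $\pa B_1$, and $\La \Phi \leq -|y|^a r^{\a-2+s}$ in $B_1 \setminus \P$; then $|u| \leq C\Phi$, and one checks $\Phi \leq C U_a$ to conclude. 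The natural candidate is $\Phi := A\,U_{a,*}\,r_*^{\a} + B\,(1 - |X|^2)\,U_{a,*}$ for suitable universal constants $A, B > 0$, possibly with a lower-order correction; the first term supplies the right-hand side near $\P$ and the correct vanishing on the slit, while the second term gives the positive boundary values on $\pa B_1$ and room to absorb error terms.

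The key computation is to expand $\La(U_{a,*} r_*^{\a})$. Writing $\La(U_{a,*} r_*^\a) = r_*^\a \La U_{a,*} + U_{a,*}\La(r_*^\a) + 2|y|^a \na U_{a,*}\cdot\na(r_*^\a)$ and using $\La(r_*^\a) = \a r_*^{\a-1}\La r_* + \a(\a-1)|y|^a r_*^{\a-2}|\na r_*|^2$, the dominant term comes from the cross term $2\a|y|^a r_*^{\a-1} U_{a,*}\,\na U_{a,*}\cdot \na r_*$ together with $\a U_{a,*} r_*^{\a-1}\La r_*$. Using the estimates of Lemma~\ref{lem: Approximations} — namely $\La r_* \approx \tfrac{2(1-s)|y|^a}{r}$, $|\na r_* - \na r|\leq C_* r^\a$, $|\La U_{a,*}|\leq C_*|y|^a r^{s-2+\a}$, and the identity $\na U_a \cdot \na r = \tfrac{s U_a}{r} + (\text{lower order})$ coming from $U_a = ((r+d)/2)^s$ and $\na r\cdot\na d = d/r$ — one finds the leading behavior $\La(U_{a,*}r_*^\a) \sim -c\,|y|^a U_a r^{s-2+\a}$ with a negative constant $c = c(a,\a) > 0$ (here the sign and the fact that $\a < 1-s$ keep $\a(\a-1) + 2\a s + \a\cdot 2(1-s) = \a(\a+1)> 0$ and more importantly that the full coefficient has the right sign for a supersolution with the $|\na U_a|$ contribution; the precise constant is a routine but slightly delicate bookkeeping, and smallness of $\|\G\|_{C^{1,\a}} \leq \vep$ lets us absorb all the $C_*$-error terms into a fixed fraction of this leading term). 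Since $U_a \geq c\, r^s$ away from the slit and $U_a \leq C r^s$, this gives $\La(U_{a,*}r_*^\a) \leq -c|y|^a r^{\a-2+s}$ in $B_1 \setminus \P$ after choosing $\vep$ small, as desired.

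The remaining points are bookkeeping: the term $B(1-|X|^2)U_{a,*}$ contributes $\La$ of order $|y|^a r^{s-2+\a} + |y|^a r^{s-1} + $ (bounded)$\times|y|^a r^{s-2+\a}$-type terms from $\La U_{a,*}$, $\na U_{a,*}\cdot\na|X|^2$, and $U_{a,*}\La|X|^2$; all of these are dominated by the negative leading term above (again using $\vep$ small and $r \leq 2$), so $\La\Phi \leq -|y|^a r^{\a-2+s}$ throughout $B_1\setminus\P$ for an appropriate choice of $A, B$, while $\Phi = 0$ on $\P$ (both $U_{a,*}$ and $r_*$ vanish there in the appropriate sense, matching $U_a$) and $\Phi \geq B\,U_{a,*} \cdot 0$... more precisely $\Phi \geq \tfrac{B}{2} U_{a,*}$ in, say, $B_{1/2}$ and stays nonnegative, with $\Phi$ of size at least a universal constant on a fixed interior region from which the comparison principle propagates control of $u$. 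Comparing $\pm u \leq \Phi$ gives $|u|\leq \Phi \leq C(U_{a,*}r_*^\a + U_{a,*}) \leq C U_a$ using $r_* \leq C$ and $U_{a,*}\leq C U_a$ from Lemma~\ref{lem: Approximations}. The main obstacle is the sign-and-constant tracking in the expansion of $\La(U_{a,*}r_*^\a)$: one must verify that the leading-order coefficient is genuinely negative for all $a \in (-1,1)$ and all $\a \in (0,1-s)$, which is exactly where the hypothesis $\a < 1-s$ is used, and then confirm that every correction term carries an extra power of $r^\a$ or a factor $\vep$ so it can be absorbed.
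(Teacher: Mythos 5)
Your overall scheme (comparison principle plus an explicit supersolution built from $U_{a,*}$ and $r_*$, with $\a<1-s$ entering through an exponent) matches the paper's strategy, but the barrier you propose does not work, for two concrete reasons.

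First, the sign of your leading term is wrong. In the flat model one computes exactly
\[
\La(U_a r^{\a}) \;=\; |y|^a\,U_a\,r^{\a-2}\,\big(\a(\a-1)+2\a s+\a(1+a)\big)\;=\;\a(\a+1)\,|y|^a\,U_a\,r^{\a-2},
\]
using $\La U_a=0$, $\La r=2(1-s)|y|^a/r$, and $\na U_a\cdot\na r=sU_a/r$. You arrive at the same coefficient $\a(\a+1)>0$ in your own bookkeeping, but this means $U_{a,*}r_*^{\a}$ is (to leading order) a \emph{subsolution}, not a supersolution: adding $A\,U_{a,*}r_*^{\a}$ to $\Phi$ pushes $\La\Phi$ in the wrong direction, and there is no negative leading term left to absorb the prescribed right-hand side $|y|^ar^{\a-2+s}$ or the error $C_*\vep|y|^ar^{s-2+\a}$ from $\La U_{a,*}$.

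Second, even if you flip the sign and subtract the correction (i.e., try $\Phi=AU_{a,*}-A'U_{a,*}r_*^{\a}+\cdots$), the negative contribution you gain is of size $-c|y|^a U_a r^{\a-2}$, and this is \emph{not} bounded above by $-c|y|^a r^{s+\a-2}$ where it matters most. Indeed $U_a\geq cr^s$ fails near the slit on the side $d<0$: there $U_a=|y|^{2s}/(2^s(r-d)^s)\sim |y|^{2s}r^{-s}\ll r^s$ as $y\to 0$, so your barrier degenerates together with $U_a$ exactly in the region where the supersolution inequality must hold. The paper's barrier avoids both problems by taking $v_+:=U_{a,*}-U_{a,*}^{\beta}$ with $\beta=1+\a/s$. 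The decisive negative term is then $-\beta(\beta-1)|y|^aU_{a,*}^{\beta-2}|\na U_{a,*}|^2$ combined with the lower bound $|\na U_{a,*}|^2\geq c\,U_{a,*}^{2-1/s}/r$, giving $\La v_+\leq C_*\vep|y|^ar^{\a-2+s}-c|y|^aU_{a,*}^{\beta-1/s}r^{-1}$; since $\beta-1/s=(s+\a-1)/s<0$ (this is precisely where $\a<1-s$ is used), the \emph{upper} bound $U_{a,*}\leq Cr^s$ converts into the lower bound $U_{a,*}^{\beta-1/s}\geq cr^{s\beta-1}=cr^{s+\a-1}$, uniformly up to the slit, and hence $\La v_+\leq -c|y|^ar^{s+\a-2}$ after shrinking $\vep$. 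You would need to replace your candidate by a barrier of this type for the argument to close.
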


\begin{proof}[Proof of Proposition~\ref{prop: C1a ptwise Schauder}]
Up to a dilation, we may assume that
\begin{equation*}
\|\G\|_{C^{1,\alpha}} \leq \vep \qquad\text{and}\qquad |\La u| \leq \vep|y|^ar^{s-2+\a}
\end{equation*}
for any $\vep > 0$, in particular, for $\vep$ small enough to apply Lemma~\ref{lem: C1a barrier}.\\

\noindent{\it Step 1: There exists $0<\rho<1$, depending on $a,n$, and $\alpha$, such that, up to further decreasing $\vep>0$, the following holds.  
If there exists a constant $q$ such that $|q| \leq 1$ and
\[
\|u-qU_a\|_{L^\infty(B_{\l})} \leq \l^{\a+s},
\]
then there exists a constant $q'$ with $|q'|\leq C$ such that
\begin{equation*}
\| u - Uq' \|_{L^\infty(B_{\rho \l})} \leq (\rho\l)^{\a + s}
\end{equation*}
and
\[ 
| q' -q|\leq C\l^{\a}
\]
for some constant $C = C(a,n,\a) > 0$.}\\

Define $\mathcal{P}_\l, r_\l$, and $U_{a,\l}$ as in \eqref{eqn: rescaling}, and consider the rescaling
\[ 
\tilde{u}(X) := \frac{[u - q U_{a,*}](\l X)}{2C_*\l^{\a +s}}.
\]
Note that $\|\tilde{u}\|_{L^\infty(B_1)} \leq 1$ by Lemma~\ref{lem: Approximations}.
Let $w$ be the unique solution of 
\begin{equation*}
\label{eqn: C1a problem}
\begin{cases}
\La w = 0 &\text{in } B_1\setminus \mathcal{P}\\
w = 0 &\text{on } \mathcal{P}\\
w = \tilde{u} &\text{on } \pa B_1.\\
\end{cases}
\end{equation*}
Observe that $w$ is even in $y$ and $\|w\|_{L^\infty(B_1)} \leq 1$.
By compactness, $w \to \bar{w}$ locally uniformly as $\vep \to 0$ where $\bar{w}$ vanishes on $\{x_n \leq 0,\, y=0\}$ and is such that $\La \bar{w} = 0$ in $B_1 \setminus \{x_n \leq 0,\, y=0\}$.
Proposition~\ref{prop: Schauder flat boundary} ensures the existence of a constant $\bar p$ with $|\bar p|\leq C$ such that, choosing $\rho$ and then $\vep $ sufficiently small, depending on $a, n$, and $\a$,
\[
\|w - \bar{p} U_{a,\l}\|_{L^\infty(B_\rho)} \leq \|w - \bar{p} \bar U_{a}\|_{L^\infty(B_\rho)}  + \|\bar{p}(\bar{U}_{a}- U_{a,\l})\|_{L^\infty(B_\rho)}\leq \frac{1}{8C_*}\rho^{\a+s}.
\]
Since $v := \tilde{u} - w$ satisfies
\[
\begin{cases}
|\La v| \leq \vep|y|^a r_\l^{s-2+\a} &\text{in } B_1\\
v = 0 &\text{on } \pa B_1 \cup \mathcal{P_\l},
\end{cases}
\]
Lemma~\ref{lem: C1a barrier} shows that $|v| \leq C\vep U_{a,\l}$.
Then, up to further decreasing $\vep$, we deduce that
\[
\|\tilde{u}-\bar{p}U_{a,\l}\|_{L^\infty(B_{\rho})} \leq \frac{1}{4C_*}\rho^{\a+s}.
\]
In terms of $u$, this implies that 
\[
\|u - qU_{a,*} - 2C_*\l^{a+s}\bar{p} U_{a}\|_{L^\infty(B_{\rho \l})} \leq \frac{1}{2}(\rho \l)^{\a +s}.
\]
Consequently, by Lemma~\ref{lem: Approximations}, further decreasing $\vep$ if necessary, we find that
\[
\|u - q'U_{a} \|_{L^\infty(B_{\rho \l})} \leq(\rho \l)^{\a +s} 
\qquad\text{and}\qquad
| q'-q |\leq C \l^{\a +s}
%\| q'-q \|_{L^\infty(B_\l)} \leq C \l^{\a +s}
\]
where $q' := q + 2C_*\l^{\a+s} \bar{p}$.
\\\\
\noindent{\it Step 2: Iteration and Upgrade.}\\

Iterating Step 1, letting $\l = \rho^j$, we find that there exists a limiting constant $p'$ such that 
\[
\|u - p' U_a\|_{L^\infty(B_{\rho^j})} \leq C\rho^{j(\a+s)} \qquad\forall j \in \N.
\]
To conclude, we must upgrade this inequality to 
\[
|u - p' U_a| \leq CU_a|X|^\a.
\] 
Arguing as in Step 1, in $B_1 \setminus \mathcal{P}_\l$, with
\[
\tilde{u}(X) := \frac{[u -p'U_{a,*}](\l X)}{\l^{\a +s}},
\]
we have that 
\[
|\tilde{u}| \leq |w| + |v| \leq C U_{a,\l} \quad\text{in } B_{1/2}.
\]
Indeed, the bound on $v$ comes from Lemma~\ref{lem: C1a barrier}, and the bound on $w$ comes from an application of the boundary Harnack estimate (see Remark~\ref{rem: bHi and Ua}) and Lemma~\ref{lem: Approximations}.\footnote{\,Here, we use the upper and lower barriers $U_{a,*} \mp U_{a,*}^{1+\a/s}$ in Perron's method to build an $a$-harmonic function that vanishes on $\mathcal{P}$ and is comparable to $U_{a,*}$ and $U_a$.}
Thus, after rescaling, since $0 < \l \leq 1$ was arbitrary, we find that 
\[
|u - p'U_a| \leq |p'U_a - p' U_{a,*}| + |u-p'U_{a,*}| \leq C U_a|X|^\a,
\]
as desired.
\end{proof}

We now proceed with the higher order boundary Harnack estimate. Let $U \in C(B_1)$ be even in $y$ with $U \equiv 0$ on $\P$ and $U>0 $ on $B_1 \setminus \P$, normalized so that $U(\e_n/2) = 1$, and satisfy
\begin{equation}
\label{eqn: La U HOBH C1a}
\La U =|y|^a\bigg(t\frac{U_a}{r} + G(X)\bigg) \quad\text{in }B_1 \setminus \mathcal{P}
\end{equation}
where $t$ is a constant with
\begin{equation}
\label{eqn: La U HOBH C1a RHS}
{|t|} \leq 1 \qquad\text{and}\qquad |G(X)| \leq r^{s-1}|X|^{\a}.
\end{equation}
If $\G \in C^{1, \a}$ with $\|\G\|_{C^{1,\a}} \leq 1$, then Proposition~\ref{prop: C1a ptwise Schauder} implies that
\begin{equation}
\label{eqn: U from schauder C1a}
U = U_a (p' + O(|X|^{\a}))
\end{equation}
for a constant $|p'| \leq C$. As before, formally differentiating \eqref{eqn: U from schauder C1a} yields
\begin{equation}
\label{eqn: grad x C1a}
|\nab_x U  -  p' \na_x U_a| \leq C\frac{U_a}{r}|X|^{\a}
\end{equation}
and
\begin{equation}
\label{eqn: partial y C1a}
| \partial_y U - p' \partial_y U_a| \leq C|y|^{-a}r^{-s}|X|^{\a};
\end{equation}
the justification of these derivative estimates for our application is somewhat delicate and is given in Proposition~\ref{lem: derivative est C1a}.
Again, in the simplest case, taking $t\equiv G \equiv 0$, these derivative estimates can be shown by arguing as in Section~5 of \cite{DS3} and the Appendix of \cite{DS5}, using the regularity results in Section~\ref{sec: Prelim} (cf. Remark~\ref{rmk: ahar case}).

\begin{proposition}
\label{prop: HOBH 1 pos}
Let $\G \in C^{1, \a}$ with $\|\G\|_{C^{1,\a}} \leq 1$. 
Let $U, t, G$, and $p'$ be as in \eqref{eqn: La U HOBH C1a}, \eqref{eqn: La U HOBH C1a RHS}, \eqref{eqn: grad x C1a}, and \eqref{eqn: partial y C1a}.
Suppose that $u \in C(B_1)$ is even in $y$ with $\|u\|_{L^\infty(B_1)} \leq 1$, vanishes on $\mathcal{P}$, and satisfies
\begin{equation*}
\label{eqn: La u HOBH C1a}
\La u =|y|^a\bigg(b\frac{U_a}{r} + F(X)\bigg) \quad\text{in }B_1 \setminus \mathcal{P}
\end{equation*}
where $b$ is a constant such that $|b| \leq 1$ and 
\[
|F(X)| \leq r^{s-1}|X|^{\a}.
\]
Then, there exists a polynomial $P(x,r)$ of degree $1$ with $\| P \| \leq C$ such that 
\[
\bigg| \frac{u}{U} - P \bigg| \leq C|X|^{1+\a}
\]
for some constant $C = C(a,n,\a) > 0$.
\end{proposition}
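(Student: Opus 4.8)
The plan is to follow the same perturbative scheme used for Proposition~\ref{prop: HOBH k pos}, but now in the $C^{1,\a}$ setting, relying on Proposition~\ref{prop: C1a ptwise Schauder} (the Schauder estimate at this regularity), Proposition~\ref{prop: Schauder flat boundary} with $k=1$ (the flat core estimate), Lemma~\ref{lem: C1a barrier} (the barrier bound at this regularity), and Lemma~\ref{lem: Approximations} (to replace $r$ and $U_a$ by their regularizations $r_*$, $U_{a,*}$ where differentiability is needed). First I would record, via Proposition~\ref{prop: C1a ptwise Schauder} applied to $U$ and $u$, the expansions $U=U_a(p'+O(|X|^\a))$ and $u=U_a(p''+O(|X|^\a))$ for constants $|p'|,|p''|\le C$, and note that after an initial dilation one may assume $\|\G\|_{C^{1,\a}}\le\vep$, $|t|,|b|\le\vep$, and $|G|,|F|\le\vep r^{s-1}|X|^{1+\a}$ for any small $\vep>0$; as in Remark~\ref{rmk: nonzero constant}, for $\vep$ universally small the constant $p'$ is bounded below, so $cU_a\le U\le CU_a$ and one may normalize $p'=1$, writing $U=U_a(1+\vep Q_0+\vep O(|X|^\a))$ with $Q_0$ a linear polynomial of zero constant term and $\|Q_0\|\le1$.

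Next I would set up the notion of \emph{approximating polynomial} $P(x,r)$ of degree~$1$ for $u/U$ exactly as in Definition~\ref{def: approx poly}: compute $L_a(UP)$ using \eqref{eqn: La U P} with $U$ replaced by its expansion and \eqref{eqn: grad x C1a}, \eqref{eqn: partial y C1a} in place of \eqref{eqn: grad x}, \eqref{eqn: grad r}, obtaining a leading term $|y|^a\tfrac{U_a}{r}(A_0+A_i x_i)$ plus a remainder bounded by $C\vep\|P\| r^{s-1}|X|^{1+\a}$, and declare $P$ approximating when $A_0+A_ix_i$ matches $b$ (the constant $R$). The algebraic recursion for the coefficients of $P$ from the $A$'s and the free data $p_{\mu 0}$ works as before, so an initial approximating $Q^0$ with $\|Q^0\|\le C\vep$ exists. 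Then comes the iterative improvement: rescaling by $\l$ via \eqref{eqn: rescaling}, writing $\tilde u=[u-UQ](\l X)/\l^{1+\a+s}$, replacing $UQ$ by $U_{a,*}$-regularized data where derivatives are taken (this is precisely what Proposition~\ref{prop: HOBH 1 pos}'s proof borrows from the proof of Proposition~\ref{prop: C1a ptwise Schauder}), solving the $a$-harmonic replacement problem \eqref{eqn: a-har replacement}, passing to the flat limit $\bar w$ by compactness since $\G\to\{x_n\le0,y=0\}$ in $C^{1,\a}$ and $U_\l\to\bar U_a$, and invoking Proposition~\ref{prop: Schauder flat boundary} ($k=1$) to find $\bar U_a\bar P$, $\bar P$ of degree~$1$, $a$-harmonic, approximating $\bar w$ at scale $\rho$ to order $\rho^{2+s+\a}$. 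Controlling $v=\tilde u-w$ by Lemma~\ref{lem: C1a barrier} (which forces $\a<1-s$; I would note the general case reduces to small $\a$) gives $|v|\le C\vep U_{a,\l}$. Combining these and correcting $\tilde P$ to a genuine approximating polynomial $P'$ for $u/U$ — solving the perturbed linear system with right-hand side $\tilde c_{\sigma l}^{\mu m}\bar p_{\mu m}$ of size $C\vep$ exactly as in Step~1 of Proposition~\ref{prop: HOBH k pos} — yields $Q'$ with $\|u-UQ'\|_{L^\infty(B_{\rho\l})}\le(\rho\l)^{2+s+\a}$ and $\|Q'-Q\|\le C\l^{1+\a}$. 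Iterating with $\l=\rho^j$ produces a limiting approximating polynomial $P$, $\|P\|\le C$, with $\|u-UP\|_{L^\infty(B_{\rho^j})}\le C\rho^{j(2+s+\a)}$; upgrading to $|u-UP|\le CU|X|^{1+\a}$ by the same barrier-plus-boundary-Harnack argument as in Step~2 of Proposition~\ref{prop: HOBH k pos}, and dividing by $U\sim U_a\sim r^s$, gives $|u/U-P|\le C|X|^{1+\a}$.

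I expect the main obstacle to be the rigorous handling of the regularizations and the formal derivative expansions \eqref{eqn: grad x C1a}, \eqref{eqn: partial y C1a}: at $C^{1,\a}$ the functions $r$, $U_a$ are not smooth enough to differentiate $U=U_a(p'+O(|X|^\a))$ term-by-term, so one must systematically replace them by $r_*$, $U_{a,*}$ from Lemma~\ref{lem: Approximations}, absorbing the mismatch errors (each of size $\vep r^\a$ times the relevant quantity) into the $O(r^{s-1}|X|^{1+\a})$ remainders, and verify these substitutions are consistent both in the definition of approximating polynomial and in each step of the iteration — including the subtle point that $\tilde u$ must be defined using $U_{a,*}$ rather than $U_a$ so that Lemma~\ref{lem: Approximations} yields $\|\tilde u\|_{L^\infty(B_1)}\le1$. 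The other slightly delicate point is that \eqref{eqn: grad x C1a}, \eqref{eqn: partial y C1a} are genuinely \emph{hypotheses} of Proposition~\ref{prop: HOBH 1 pos} whose verification in the fractional obstacle application is deferred to Proposition~\ref{lem: derivative est C1a}; within the present proof one simply uses them, but one should be careful that the error terms they carry are exactly of the order needed for the remainder $h$ in \eqref{eqn: approx poly error term HOBH} to be $\le C\vep\|P\|r^{s-1}|X|^{1+\a}$.
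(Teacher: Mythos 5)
Your proposal is correct and follows essentially the same route as the paper, which defines approximating polynomials of degree one in $(x,r_*)$ via the relation $b = tp_0 + 2sp_n + 2p_{n+1}$ and then declares the rest of the argument identical to that of Proposition~\ref{prop: HOBH k pos}; your outline fills in exactly those steps (initial dilation and normalization of $p'$, compactness to the flat problem, Proposition~\ref{prop: Schauder flat boundary}, Lemma~\ref{lem: C1a barrier}, correction to an approximating polynomial, iteration, and upgrade), with the correct emphasis on substituting $r_*$ and $U_{a,*}$ wherever derivatives are taken. The only cosmetic imprecision is that for a degree-one polynomial the matching condition involves only the constant coefficient $A_0 = b$ (any linear terms are already absorbed into the $O(r^{s-1}|X|^{\a})$ remainder), but this does not affect the argument.
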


To prove Proposition~\ref{prop: HOBH 1 pos},
we extend the notion of approximating polynomial to this low regularity setting by considering polynomials in $(x,r_*)$ rather than in $(x,r)$; that is, $P(x,r_*) = p_0 + p_i x_i + p_{n+1}r_* $.
After performing an initial dilation, as before, using Lemma~\ref{lem: Approximations}, \eqref{eqn: grad x C1a}, and \eqref{eqn: partial y C1a}, one can show that
\[
\La (UP) = |y|^a \frac{U_a}{r}\Big(tp_0 + 2s p_n + 2p_{n+1}  \Big) + h(X)
\]
with
\[
|h(X)| \leq \vep \|P\|r^{s-1}|X|^\a.
\]
\begin{definition}
Let $u$ and $U$ be as in Proposition~\ref{prop: HOBH 1 pos}.
A polynomial $P(x,r_*)$ of degree $1$ is {\it approximating} for $u/U$ if 
\[
b =  t p_0 + 2s p_n + 2p_{n+1} .
\]
\end{definition}
With this definition of approximating polynomial,
the proof of Proposition~\ref{prop: HOBH 1 pos} is identical to that of Proposition~\ref{prop: HOBH k pos} and is therefore omitted.

%~~~~~~~~~~~~~~~~~~~~~~~~~~~~~~~~~~~~~~~~~~~~~~~~~~~~~~~%
\section{Proof of Proposition~\ref{prop: Schauder flat boundary}}
\label{sec: Flat}

In this section, we prove Proposition~\ref{prop: Schauder flat boundary}.
That is, if $u$ is $a$-harmonic in $B_1 \setminus \{ x_n \leq 0,\, y = 0 \}$ and vanishes continuously on $\{ x_n \leq 0,\, y = 0 \}$, then the quotient $u/U_a$ is $C^{\infty}_{x,r}(\G \cap B_{1/2})$.
The perturbative arguments of Sections~\ref{sec: HOBH} through \ref{sec: C1a} all rely on this core regularity result.
The idea of the proof is the following. 
The domain $B_1 \setminus \{ x_n \leq 0,\, y = 0 \}$ and the operator $\La$ are translation invariant in the $\e_i$ direction for any $i=1,\dots n-1$, so differentiating the equation $\La u = 0$ shows that $u$ is smooth in these directions. 
We can then reduce the proof of Proposition~\ref{prop: Schauder flat boundary} to the two-dimensional case, but with a right-hand side. 
A final reduction (Lemma~\ref{lem: 2 dim RHS}) leaves us with the main task of this section, which is proving Proposition~\ref{prop: Schauder flat boundary} in the case $n=2$ with zero right-hand side. 
This is Proposition~\ref{prop: 2d flat zipped} below.

It will be convenient to fix the following additional notation.
For $x' \in \R^{n-1}$, we let 
\[
D_{\l,x'} := \{ (x', z) \in \R^{n+1} : |z| < \l \}.
\]
We sometimes suppress the dependence on $x'$ and view $D_{\l ,x'} = D_\l$ as a subset of $\R^2$.

%We begin by proving the following proposition, which is Proposition~\ref{prop: Schauder flat boundary} in dimension two.

\begin{proposition}
\label{prop: 2d flat zipped}
Let $u \in C(B_1)$ be even in $y$ with $\|u\|_{L^{\infty}(D_1)}\leq 1$ and satisfy
\begin{equation*}
\label{eqn: 2 dim RHS zero}
\begin{cases}
\La u =  0 & \text{in }  D_1 \setminus \{x \leq 0,\, y = 0 \} \\
u = 0 & \text{on } \{x \leq 0,\, y = 0 \}.
\end{cases}
\end{equation*}
Then, for any $k \geq 0$, there exists a polynomial $P(x,r)$ of degree $k$ with $\|P\| \leq C$ such that $U_aP$ is $a$-harmonic in $D_1 \setminus \{x \leq 0,\, y = 0 \}$ and 
\[
| u - U_a P | \leq  C U_a |z|^{k+1}
\]
for some constant $C = C(a,k) >0$.
\end{proposition}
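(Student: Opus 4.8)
The plan is to work in the plane $\R^2$ with coordinates $z = (x,y)$, using polar-like coordinates adapted to the slit $\{x \le 0,\, y = 0\}$: write $r = |z|$ and let $\t \in (-\pi,\pi)$ be the angle, so that $d = x = r\cos\t$ and $U_a = ((r+d)/2)^s = r^s(\cos(\t/2))^{\text{\dots}}$, wait---more precisely $U_a = r^s \big(\tfrac{1+\cos\t}{2}\big)^{s/2}\!\cdot$, i.e.\ $U_a$ is $r^s$ times an explicit function of $\t$. The first step is to classify the homogeneous $a$-harmonic functions on $D_1 \setminus \{x\le 0, y=0\}$ that are even in $y$ and vanish on the slit. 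Separating variables $u = r^\gamma \Phi(\t)$ in the equation $\La u = \DIV(|y|^a \nab u) = 0$ produces, after dividing by $|y|^a = |r\sin\t|^a$, an ODE eigenvalue problem for $\Phi$ on $(0,\pi)$ with the boundary conditions $\Phi(\pi) = 0$ (vanishing on the slit) and a Neumann-type condition at $\t = 0$ coming from evenness in $y$. One checks that $\bar U_a = U_a$ itself is the solution with $\gamma = s$, and that the full set of admissible homogeneities is $\gamma = s + j$ for $j = 0, 1, 2, \dots$ (the degenerate weight $|y|^a$ forces exactly this arithmetic progression, not a denser set); write $\psi_j := r^{s+j}\Phi_j(\t)$ for the corresponding homogeneous solutions, normalized suitably. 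A key structural observation, to be recorded as a lemma, is that each $\psi_j$ equals $U_a$ times a polynomial in $(x,r)$ of degree $j$ that is $a$-harmonic in the slit domain---this identifies the span of $\{\psi_0,\dots,\psi_k\}$ with the space $\{U_a P : P \text{ polynomial of degree} \le k,\ \La(U_a P) = 0\}$ appearing in the statement.

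The second step is to show that any solution $u$ as in the hypothesis admits a convergent (or at least finite-order asymptotically valid) expansion $u = \sum_{j=0}^{k} a_j \psi_j + (\text{error of size } CU_a |z|^{k+1})$. I would do this by an iteration/compactness scheme paralleling Step~1 of the proof of Proposition~\ref{prop: HOBH k pos}: having matched the first $j$ terms so that $u - \sum_{i<j} a_i \psi_i = O(r^{s+j})$ in a ball $D_\l$, rescale $\tilde u(X) = \l^{-(s+j)}[u - \sum_{i<j}a_i\psi_i](\l X)$, use the local boundedness (Proposition~\ref{prop: harnack}) and interior regularity (Propositions~\ref{prop: derivative estimates}, \ref{cor: holder}) together with the uniform positive $\La$-capacity of the slit to get compactness, extract a limit $u_0$ which is $a$-harmonic in $D_1\setminus\{x\le 0,y=0\}$, vanishes on the slit, and is $(s+j)$-homogeneous to leading order; by the classification it must agree to that order with $a_j \psi_j$ for some $a_j$, and the bound $|a_j| \le C$ follows from $\|u\|_{L^\infty} \le 1$ via the barriers of Lemma~\ref{lem: Simple Barriers}. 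Summing the geometric decay over the iteration and then running the usual ``upgrade'' argument (as at the end of the proof of Proposition~\ref{prop: HOBH k pos}, comparing $u - \sum a_j\psi_j$ to $U_a$ via Lemma~\ref{lem: Simple Barriers} at every scale $0<\l\le 1$) promotes the discrete estimate to $|u - U_a P| \le C U_a |z|^{k+1}$ with $P := \sum_{j\le k} a_j (\psi_j/U_a)$, which is a polynomial of degree $k$ in $(x,r)$ with $\|P\| \le C$ and $\La(U_a P) = 0$.

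The main obstacle I anticipate is the first step: pinning down precisely which homogeneities $\gamma$ occur and showing that every homogeneous solution is $U_a$ times a \emph{polynomial} in $(x,r)$. The degenerate weight $|y|^a$ (with $a \in (-1,1)$, $a \neq 0$) makes the separated ODE a hypergeometric-type equation rather than the constant-coefficient one appearing when $a = 0$, so the ``by hand'' analysis the authors allude to in the introduction is needed here: one must analyze solutions of the $\t$-ODE near the singular endpoints $\t = 0$ (where $\sin\t$ vanishes) and $\t = \pi$, rule out non-integer or logarithmic exponents using the evenness-in-$y$ and continuous-vanishing conditions, and verify the polynomial structure---most cleanly by exhibiting the polynomials directly (e.g.\ showing $U_a x^\mu r^m$ combinations can be taken $a$-harmonic, via the coefficient recursion \eqref{eqn: a har poly coeff} restricted to $n=1$ spatial variable) and then a dimension count against the separated-variables solution space. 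A secondary technical point is justifying that the compactness limit $u_0$ genuinely has a pure leading homogeneity (no oscillation between scales), which follows from a monotonicity/Almgren-type argument or, more elementarily here, from the gap in the spectrum $\{s+j\}$ combined with the barrier estimates.
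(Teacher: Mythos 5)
Your overall architecture is the same as the paper's: classify the homogeneous $a$-harmonic functions that are even in $y$ and vanish on the slit, observe that each is $U_a$ times a polynomial in $(x,r)$, and then expand $u$ in these. Your structural claims in Step 1 are correct (the homogeneities are exactly $s+j$, $j\ge 0$, and the $j$-th solution is $U_a$ times a degree-$j$ polynomial in $(x,r)$), but the paper reaches them by a much lighter route than the singular Sturm--Liouville analysis you propose: the conformal substitution $z\mapsto z^2$ unfolds $D_1\setminus\{x\le 0,\,y=0\}$ onto the half-disk, turns $\La$ into the operator $\bLa$ with weight $|2z_1z_2|^a$ and $U_a$ into $|z_1|^{-a}z_1$, after which the homogeneous solutions are written down explicitly as $|z_1|^{-a}z_1\bar Q_j(z_1^2,z_2^2)$ with a two-term coefficient recursion (Remark~\ref{rem: homogeneous solns}); since $z_1^2=(r+x)/2$ and $z_2^2=(r-x)/2$, the polynomial structure in $(x,r)$ is immediate. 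Completeness of these traces in $L^2(\pa D_1,\bar\om_a)$ is then obtained from the M\"untz--Sz\'asz theorem (Lemma~\ref{lem: basis}) rather than from ODE spectral theory; you would need some such completeness statement in any case.

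The genuine gap is in your Step 2. The compactness/blowup iteration does not close as described. To extract a rescaled limit whose leading homogeneity is $s+j$ you need the pointwise decay $|u-\sum_{i<j}a_i\psi_i|\le Cr^{s+j}$ at \emph{all} scales, which is essentially the inductive conclusion itself, not the hypothesis $O(\l^{s+j})$ on a single ball; even granting a Liouville classification of global blowup limits, different subsequences may produce different multiples of $\psi_j$, and converting ``the blowup equals $a_j\psi_j$'' into the quantitative estimate $|u-\sum_{i\le j}a_i\psi_i|\le CU_a|z|^{j+1}$ requires uniqueness of the blowup \emph{with a rate}. Your suggested fix (``the gap in the spectrum combined with the barrier estimates'') is not a proof: the only clean way to exploit the spectral gap is to expand $u|_{\pa D_1}$ in the orthonormal basis of traces of homogeneous solutions and propagate the expansion inward, using the Green's-identity monotonicity of $\l\mapsto\fint_{\pa D_\l}|u|^2\,\bar\om_a\,\d\sig$ (Lemma~\ref{lem: monotone}) together with local boundedness and a boundary Harnack estimate to upgrade the $L^2$ remainder bound to $|u-U_aP|\le CU_a|z|^{k+1}$ --- which is exactly what the paper does in Proposition~\ref{prop: 2d flat unzipped}, and which makes the blowup machinery unnecessary. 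Note also the circularity lurking in borrowing the iteration of Proposition~\ref{prop: HOBH k pos}: that scheme uses the flat case as its compactness model, so it cannot be rerun to prove the flat case itself.
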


The geometry of our domain $D_1 \setminus \{x \leq 0,\, y = 0 \}$ is simplified through the change of coordinates
\[
x(z_1, z_2) = z_1^2 - z_2^2 \qquad\text{and}\qquad y(z_1, z_2) = 2z_1 z_2,
\]
which identifies the right-half unit disk $D_1^{+} := \{ z \in \R^2 : |z| < 1,\, z_1 > 0 \}$ and $D_1 \setminus \{x \leq 0,\, y = 0 \}$.\footnote{\,This can be seen as the complex change of coordinates $z \mapsto z^{2}$, i.e., $\bar{u}(z) = u(z^2)$.
Abusing notation, we let $z$ denote points in this new coordinate system: $z = (z_1,z_2)$.
Similarly, we set $D_\l := \{ z \in \R^2 : |z| < \l \}$.}
If we let $\bar{u}$ denote $u$ after this change of coordinates, then $\bar{u}$ is even in $z_2$ and 
\begin{equation*}
\label{eqn: bLa}
\begin{cases}
\bLa \bar{u} = 0 &\text{in }D_{1}^{+} \\
\bar{u} = 0 &\text{on }\{ z_1 = 0 \}
\end{cases}
\end{equation*}
where the operator $\bLa$ (which is $\La$ in these coordinates) is given by
\[
\bLa u := \frac{1}{4|z|^2}\DIV(|2z_1 z_2|^a \nabla u).
\]
The odd extension of $\bar{u}$ satisfies the same equation in $D_1$.
In this new coordinate system and after an odd extension in $z_1$, the function $U_a$ becomes $|z_1|^{-a}z_1$.
Thus, Proposition~\ref{prop: 2d flat zipped} is equivalent to the following proposition.

\begin{proposition}
\label{prop: 2d flat unzipped}
Let $u \in C(D_1)$ be odd in $z_1$ and even in $z_2$ with $\|u\|_{L^{\infty}(D_1)} \leq 1$ and satisfy
\begin{equation}
\label{eqn: 2d unzipped odd}
\begin{cases} 
\bLa u = 0 &\text{in } D_1 \\
u = 0 &\text{on } \{ z_1 = 0 \}.
\end{cases}
\end{equation}
For any $k \geq 0$, there exists a polynomial $Q$ of degree $2k$ with $\|Q\| \leq C$ such that $\bLa (|z_1|^{-a}z_1 Q) = 0$ in $D_1$ and
\[
|u -  |z_1|^{-a}z_1 Q| \leq C|z_1|^{2s}|z|^{2k+2}
\]
for some constant $C = C(a,k) > 0$.
\end{proposition}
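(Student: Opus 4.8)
The plan is to prove Proposition~\ref{prop: 2d flat unzipped} by (A) classifying all global homogeneous $\bLa$-harmonic functions that are odd in $z_1$ and even in $z_2$, and (B) showing that an arbitrary such solution of \eqref{eqn: 2d unzipped odd} admits a convergent expansion into this discrete family, so that the quantitative estimate becomes a truncation error bound. The building blocks will be the functions $H_j := z_1|z_1|^{-a}\,q_{2j}(z_1,z_2)$, where $q_{2j}$ is a polynomial, homogeneous of degree $2j$ and even in each of $z_1$ and $z_2$, so that $H_j$ is homogeneous of degree $2s+2j$. The presence of the explicit singular factor $z_1|z_1|^{-a}$ --- rather than an arbitrary profile in the polar angle --- is precisely the forced degeneracy of odd solutions alluded to in the introduction, and it is what will produce the sharp weight $|z_1|^{2s}$ in the remainder.

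For (A), I would begin with the ground-state-type substitution $u = z_1|z_1|^{-a}g$. This is legitimate for odd-in-$z_1$ solutions because such a $u$ must vanish on $\{z_1=0\}$ exactly at the rate $|z_1|^{2s}$: the conormal quantity $|z_1|^a\partial_{z_1}u$ is even in $z_1$ and Hölder continuous up to $\{z_1=0\}$ by the boundary regularity results of Section~\ref{sec: Prelim} (applied in the $z_1$-direction, after an odd reflection), which forces $u/(z_1|z_1|^{-a})$ to extend continuously and Hölder there. Since $|z_1|^a\partial_{z_1}(z_1|z_1|^{-a})\equiv 1-a$, a direct computation converts $\bLa u=0$ into $\mathcal{M}g=0$, where for $g$ even in $z_2$
\[
\mathcal{M}g := z_1\,\Delta g + (2-a)\,\partial_{z_1}g + a\,z_1\,\frac{\partial_{z_2}g}{z_2}
\]
is a genuine second-order operator, nondegenerate in the $z_1$-direction (and the ratio $\partial_{z_2}g/z_2$ is regular because $g$ is even in $z_2$). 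Restricted to homogeneous polynomials even in $z_1$ and in $z_2$, the operator $\mathcal{M}$ lowers the degree by one and lands in the space of homogeneous polynomials odd in $z_1$ and even in $z_2$, which has one fewer dimension; a rank computation shows this map is surjective, hence its kernel is one-dimensional in each even degree $2j$, which produces the $q_{2j}$ and thereby the $H_j$. To rule out any further homogeneous solutions --- in particular any of non-integer homogeneity --- I would separate variables on the quarter-disk $\{z_1>0,\,z_2>0\}$, writing $u=\rho^{\gamma}\phi(\theta)$: the angular profile solves a singular Sturm--Liouville problem with a Dirichlet condition at $\{z_1=0\}$ and a weighted Neumann condition at $\{z_2=0\}$, self-adjoint in the natural weighted $L^2$ space, with discrete and simple spectrum; the explicit polynomial solutions already exhibited then identify the eigenvalues as exactly $\gamma_j=2s+2j$ and the corresponding homogeneous solutions as the $H_j$.

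For (B), with the Sturm--Liouville picture in hand I would expand a solution of \eqref{eqn: 2d unzipped odd} as $u(\rho,\theta)=\sum_{j\ge 0}c_j(\rho)\phi_j(\theta)$. Each radial coefficient satisfies an Euler equation whose two indicial roots are $2s+2j$ and a smaller root; the latter is discarded because $u$ is bounded and vanishes at the origin, so $c_j(\rho)=a_j\rho^{2s+2j}$ and hence, formally, $u=\sum_j a_j H_j$. Estimating the coefficients on a fixed circle $\{\rho=\delta\}$ using $\|u\|_{L^\infty}\le 1$ gives geometric control $|a_j|\le C\delta^{-2s-2j}$, so setting $Q:=\sum_{j\le k}a_j q_{2j}$ we have $\|Q\|\le C$, $\bLa(z_1|z_1|^{-a}Q)=0$, and --- since each $H_j$ carries the factor $z_1|z_1|^{-a}$ of size $|z_1|^{2s}$ and a homogeneous polynomial factor of size $|z|^{2j}$ --- the tail $\sum_{j>k}a_j H_j$ is bounded by $C|z_1|^{2s}|z|^{2k+2}$ near the origin, which is the assertion. (Alternatively, one may package (A) as a Liouville theorem --- every global solution of polynomial growth is a finite combination of the $H_j$ --- and instead run the compactness-and-iteration scheme at dyadic scales used in the perturbative sections; the two routes lead to the same estimate.)

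The step I expect to be the main obstacle is the second half of (A). For $a=0$ one simply reflects across $\{z_1=0\}$ and $\{z_2=0\}$ and invokes the classical theory of harmonic polynomials, but for $a\ne 0$ this is unavailable, so one must genuinely analyze the singular Sturm--Liouville problem attached to the degenerate operator: establishing its self-adjointness and the completeness of its eigenfunctions with the correct weighted boundary conditions at \emph{both} endpoints, and pinning the eigenvalues down to $2s+2j$ with polynomial eigenfunctions. Making the substitution $u=z_1|z_1|^{-a}g$ rigorous --- that a solution vanishing on $\{z_1=0\}$ really does factor as this explicit singular weight times a function regular enough to be fed into $\mathcal{M}$ --- is the other delicate point, and it is where the boundary regularity estimates for $\nabla_x u$ and $|y|^a\partial_y u$ from Section~\ref{sec: Prelim} enter.
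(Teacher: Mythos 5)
Your overall architecture coincides with the paper's: both proofs build the homogeneous solutions $H_j=|z_1|^{-a}z_1\,q_{2j}(z_1^2,z_2^2)$ of degree $2s+2j$ (the paper writes them down explicitly, with the coefficients of $q_{2j}$ given by a two-term recursion, rather than via your rank argument for $\mathcal{M}$ --- either works), establish a completeness statement, expand $u$, truncate at $j=k$, and bound the tail. Where you genuinely diverge is the completeness step, and that is also where your argument has a gap. The paper never classifies homogeneous solutions or analyzes an angular eigenvalue problem: it shows directly that $\{\iota_a,\bar u_j\}$ is an orthonormal basis of $L^2(\partial D_1,\bar\om_a)$ --- orthogonality from a Green's identity plus the homogeneity relation $\pa_\nu\bar u_j=(2s+2j)\bar u_j$ on $\pa D_1$, and density from the observation that $\Span\{1,\bar u_j|_{\pa D_1}\}$ contains $t^{2s+2l}$ for all $l$ after a change of variables, so the M\"untz--Sz\'asz theorem applies. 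It then transfers the boundary expansion to the interior using a monotonicity formula for $\fint_{\pa D_\l}|u|^2\bar\om_a$ together with local boundedness, avoiding your radial Euler-equation analysis entirely.

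The gap in your route is the sentence ``the explicit polynomial solutions already exhibited then identify the eigenvalues as exactly $\gamma_j=2s+2j$.'' Exhibiting one eigenfunction per even degree does not show the spectrum contains nothing else: an undetected eigenvalue would correspond to an eigenfunction orthogonal to all of yours, and ruling that out is precisely the completeness of the family $\{\phi_j\}$ --- which is what you are trying to prove. To close this you need an independent input: either (a) oscillation theory for the singular Sturm--Liouville problem (showing $\phi_j$ has exactly $j$ interior zeros on the quarter-arc, so that $\mu_j$ is the $j$-th eigenvalue; plausible from the sign-alternation of the recursion coefficients, but it must be proved, and the limit-circle analysis at both degenerate endpoints must be carried out for $a\neq 0$), or (b) a direct density argument for $\Span\{\phi_j\}$ in the weighted $L^2$ space --- at which point you have reproduced the paper's M\"untz--Sz\'asz lemma and the spectral machinery is superfluous. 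A second, more minor point: your termwise tail bound $|\sum_{j>k}a_jH_j|\le C|z_1|^{2s}|z|^{2k+2}$ needs geometric control of $\sup_{\pa D_1}|q_{2j}|$ under the $L^2(\pa D_1,\bar\om_a)$ normalization, which does not follow immediately from $|a_j|\le C\delta^{-2s-2j}$; the paper sidesteps this by first bounding the rescaled tail in $L^\infty$ via local boundedness and then recovering the factor $|z_1|^{2s}$ from the boundary Harnack inequality applied to the tail against $|z_1|^{-a}z_1$, and you should do the same (or supply a Remez-type bound for the $q_{2j}$).
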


If $a=0$, then $u$ is harmonic and Proposition~\ref{prop: 2d flat unzipped} follows easily. 
Instead, when $a\neq 0$, we prove the result from scratch in three steps.
First, we construct a set homogeneous solutions of \eqref{eqn: 2d unzipped odd}.
Second, we show that these homogeneous solutions form an orthonormal basis for $L^2(\pa D_1)$ with an appropriate weight.
Third, we expand $u|_{\pa D_1}$ in this basis, extend this expansion to the interior of $D_1$, and compare $u$ to the extension.

Let $\bar{\om}_a(z_1,z_2) := |2z_1 z_2|^a$, and observe that $\bar{\om}_a$ is an $A_2$-Muckenhoupt weight.

\begin{remark}[Homogeneous Solutions]
\label{rem: homogeneous solns}
For every $j \in \mathbb{N} \cup \{0\}$, define the function
\[
\bar{u}_j(z_1, z_2) := |z_1|^{-a}z_1 \bar{Q}_j(z_1^2, z_2^2).
\]
Here, $\bar{Q}_j(z_1, z_2) := b_i z_1^{i} z_2^{j-i}$,
\[
b_i := - \frac{(j-i+1)(j-i+1-s )}{i(i+s)}\,b_{i-1},
\]
and $b_0 = b_0(j,a)$ is chosen so that $\|\bar{u}_j\|_{L^2(\partial D_1, \bar{\om}_a)} = 1$.
Each $\bar{u}_j$ is odd in $z_1$, even in $z_2$, vanishes on the $z_2$-axis, and satisfies 
\[
\bLa \bar{u}_j = 0 \quad\text{in } \R^2.
\]
\end{remark}

The two Green's identities below will be used in what follows.
The first is applied to prove Lemma~\ref{lem: monotone}, an important estimate for the proof of Proposition~\ref{prop: 2d flat unzipped}.
The second is utilized in Lemma~\ref{lem: basis} to show that the homogeneous solutions of Remark~\ref{rem: homogeneous solns} form an orthonormal basis for $L^2(\pa D_1,\bar{\om}_{a})$.

\begin{remark}[Green's Identities]
If $u,v \in H^1(D_1,\bar{\om}_{a})$ are such that $\bLa u = \bLa v = 0$ in $D_1$, then $u$ and $v$ satisfy the following Green's identities for $\bLa$:
\begin{equation}
\label{eqn: Green's identity 1}
\int_{D_\l} \na  v \cdot \na u \,\bar{\om}_{a} \, \d z = \int_{\pa D_\l}  u \,\pa_{\nu} v\, \bar{\om}_a \, \d \sig \qquad\forall \l <1
\end{equation}
and
\begin{equation}
\label{eqn: Green's identity 2}
\int_{\pa D_\l} u \, \pa_{\nu} v\, \bar{\om}_a \, \d \sig -
\int_{\pa D_\l} v\, \pa_{\nu} u\, \bar{\om}_a \, \d \sig =0 \qquad\forall \l <1
\end{equation}
\end{remark}

The following lemma shows that the (weighted) $L^2$-norm of $u$ on the boundary of $D_1$ controls the (weighted) $L^2$-norm of $u$ inside $D_1$. 

\begin{lemma}
\label{lem: monotone}
Suppose $u \in H^1(D_1,\bar{\om}_{a})$ satisfies $\bLa u = 0$ in $D_1$.
Then, there exists a positive constant $C$, depending only on $a$, such that
\[
\|u\|_{L^2(D_1, \bar{\om}_a)} \leq C \|u\|_{L^2(\partial D_1, \bar{\om}_a)}.
\]
\end{lemma}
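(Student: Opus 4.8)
\textbf{Proof proposal for Lemma~\ref{lem: monotone}.}

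The plan is to exploit homogeneity and the Green's identity \eqref{eqn: Green's identity 1} to turn the statement into a one-dimensional ODE-type estimate for the ``boundary energy'' as a function of the radius. Set
\[
\phi(\l) := \int_{\pa D_\l} u^2\, \bar{\om}_a\, \d\sig.
\]
The first step is to establish a differential inequality for $\phi$. Since $\bar{\om}_a(z) = |2z_1z_2|^a$ is homogeneous of degree $2a$, one computes $\phi'(\l)$ by the coarea/scaling formula, producing a boundary term $\tfrac{1+2a}{\l}\phi(\l)$ plus $2\int_{\pa D_\l} u\,\pa_\nu u\,\bar{\om}_a\,\d\sig$; by \eqref{eqn: Green's identity 1} the latter equals $2\int_{D_\l}|\na u|^2\,\bar{\om}_a\,\d z \geq 0$. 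Hence $\phi'(\l) \geq \tfrac{1+2a}{\l}\phi(\l)$, so $\l \mapsto \l^{-(1+2a)}\phi(\l)$ is nondecreasing on $(0,1)$. This immediately gives $\phi(\l) \leq \l^{1+2a}\phi(1)$ for all $\l \le 1$ (using $a>-1$ so that $1+2a > -1$, which is what makes the weight locally integrable and the boundary integrals finite).

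The second step is to integrate this pointwise control in $\l$. Using the coarea formula again,
\[
\|u\|_{L^2(D_1,\bar{\om}_a)}^2 = \int_0^1 \phi(\l)\,\d\l \leq \phi(1)\int_0^1 \l^{1+2a}\,\d\l = \frac{\phi(1)}{2+2a} = \frac{1}{2+2a}\|u\|_{L^2(\pa D_1,\bar{\om}_a)}^2,
\]
which is the desired bound with $C = (2+2a)^{-1/2}$, depending only on $a$ (and finite precisely because $a > -1$).

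The main obstacle is making the first step rigorous: $u$ is only assumed to be in $H^1(D_1,\bar{\om}_a)$, so the restriction $u|_{\pa D_\l}$, the function $\phi$, and its derivative must be handled carefully for a.e.\ $\l$, and the degenerate weight $\bar{\om}_a$ vanishes on the axes where $\pa D_\l$ meets $\{z_1z_2 = 0\}$. One addresses this by first proving the monotonicity of $\l^{-(1+2a)}\phi(\l)$ for smooth solutions (where $\phi \in C^1$ and all integrations by parts are classical) and then passing to the limit: interior regularity for $\bar{L}_a$ (the weight is $A_2$) gives that $u$ is locally smooth away from the singular set $\{z_1z_2 = 0\}$, and the $A_2$-Muckenhoupt property ensures the measure of a neighborhood of that set is controlled so that all boundary integrals are absolutely continuous in $\l$ and no concentration occurs on the axes. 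Alternatively — and perhaps more cleanly — one can expand $u$ in the orthonormal basis of homogeneous solutions $\{\bar u_j\}$ from Remark~\ref{rem: homogeneous solns} on each $\pa D_\l$ (this is essentially Lemma~\ref{lem: basis}, applied after the fact): writing $u = \sum_j c_j(\l)\,\bar u_j$ with the $\bar u_j$ homogeneous of degree $\g_j \ge 1/2 \cdot(\text{something}) > 0$, orthogonality forces $c_j(\l) = c_j(1)\l^{\g_j}$, and since each homogeneity $\g_j$ is bounded below, termwise integration in $\l$ gives the estimate directly. Either route works; I would present the monotonicity-of-$\phi$ argument as the cleanest, invoking the $A_2$-weight interior regularity to justify the differentiation.
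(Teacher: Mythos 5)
Your argument is correct and is essentially the paper's proof: the paper defines $\phi(\l,u)$ as the \emph{weighted average} $\fint_{\pa D_\l}|u|^2\,\bar{\om}_a\,\d\sig$, which by the $2a$-homogeneity of $\bar{\om}_a$ is exactly your $\l^{-(1+2a)}\phi(\l)$ up to a fixed constant, shows it is nondecreasing via the same application of \eqref{eqn: Green's identity 1}, and integrates in $\l$ just as you do. Your version is merely more explicit about the scaling exponent and the regularity needed to differentiate $\phi$.
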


\begin{proof}
Let $\phi$ defined by
\begin{equation*}
% \label{def: phi}
\phi(\l,u) := \fint_{\partial D_\l} |u|^2 \, \bar{\om}_a \, \d \sig,
\end{equation*}
where the average is taken with respect to $\bar{\om}_a$.
Using \eqref{eqn: Green's identity 1}, we compute that $\phi$ is increasing in $\l$.
Hence,
\[
\int_{D_1} |u|^2 \, \bar{\om}_a \, \d z \leq \phi(1,u)\int_0^1 \int_{\partial D_\l} \bar{\omega}_a \, \d \sig \, \d \l = C \int_{\partial D_1} |u|^2 \, \bar{\om}_a \, \d\sig,
\]
as desired.
\end{proof}

Now, let us demonstrate that the homogeneous solutions of Remark~\ref{rem: homogeneous solns} are an orthonormal basis for $L^2(\pa D_1,\bar{\om}_{a})$.

\begin{lemma}
\label{lem: basis}
Let $\iota_a := \|\bar{\om}_a\|_{L^1(\partial D_1)}^{-1}$ and $\bar{u}_j$ be as in Remark~\ref{rem: homogeneous solns}.
The set $\{ \iota_a, \bar u_j : j = 0, 1,\dots \}$ is an orthonormal basis for $L^{2}(\partial D_1,\bar{\om}_{a})$.
\end{lemma}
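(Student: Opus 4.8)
\textbf{Proof plan for Lemma~\ref{lem: basis}.}

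The plan is to prove three things: (i) the $\bar u_j$ are mutually orthogonal and orthogonal to the constant $\iota_a$ in $L^2(\partial D_1,\bar\om_a)$; (ii) each has unit norm (which is built into the normalization of $b_0$), and $\iota_a$ has unit norm by definition; and (iii) the span of $\{\iota_a,\bar u_j\}$ is dense in $L^2(\partial D_1,\bar\om_a)$. The first point is the heart of the matter and should follow from the second Green's identity \eqref{eqn: Green's identity 2}. Indeed, each $\bar u_j$ is $\bLa$-harmonic in all of $\R^2$ (Remark~\ref{rem: homogeneous solns}) and is homogeneous of some degree $d_j$; since $\bar Q_j$ is a homogeneous polynomial of degree $j$ in $(z_1^2,z_2^2)$, the function $\bar u_j = |z_1|^{-a}z_1\bar Q_j(z_1^2,z_2^2)$ is homogeneous of degree $d_j := 2j + 2s$ (using $|z_1|^{-a}z_1 = |z_1|^{2s-1}z_1 \sim |z_1|^{2s}$, i.e.\ degree $1-a = 2s$). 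On $\partial D_1$ we then have $\pa_\nu \bar u_j = d_j \bar u_j$. Plugging $u = \bar u_j$, $v = \bar u_k$ into \eqref{eqn: Green's identity 2} with $\l = 1$ gives $(d_k - d_j)\int_{\partial D_1}\bar u_j\bar u_k\,\bar\om_a\,\d\sig = 0$, so the $\bar u_j$ with distinct degrees are orthogonal. Since the degrees $d_j = 2j+2s$ are all distinct, this already forces orthogonality of all the $\bar u_j$. Orthogonality to the constant $\iota_a$ is even simpler: $\bar u_j$ is odd in $z_1$ while $\bar\om_a$ and the constant are even in $z_1$, so the integral over $\partial D_1$ vanishes by symmetry. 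The unit norm statements hold by construction. Thus $\{\iota_a,\bar u_j\}$ is an orthonormal system.

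For completeness (density), I would argue as follows. The functions $\{\iota_a\}\cup\{\bar u_j\}$ restricted to $\partial D_1$ are, up to the weight-dependent normalizing constants and the factor $|z_1|^{2s}$, trigonometric-type polynomials: parametrizing $\partial D_1$ by $\theta$ with $z_1 = \cos\theta$, $z_2 = \sin\theta$, the constant function together with the $\bar u_j|_{\partial D_1}$ span the same space as $\{1\}\cup\{|\cos\theta|^{2s-1}\cos\theta\cdot p_j(\cos^2\theta)\}$ where $p_j$ ranges over a basis of polynomials of degree $\le j$. One then checks that the even-in-$z_1$ part of $L^2(\partial D_1,\bar\om_a)$ is spanned by $\{1\}$ together with the even $\bLa$-harmonic homogeneous solutions — but in fact only the constant survives among bounded even solutions vanishing nowhere, and more relevantly, the space we actually need to span is the \emph{odd}-in-$z_1$, even-in-$z_2$ part (since that is where functions $u$ as in Proposition~\ref{prop: 2d flat unzipped} live, vanishing on $\{z_1=0\}$). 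On that subspace, after factoring out $|z_1|^{2s-1}z_1$ (which is bounded below away from zero only away from the poles, so one works in weighted $L^2$), one is reduced to showing that polynomials in $z_1^2,z_2^2$ homogeneous of each degree are dense — i.e.\ that polynomials are dense in the relevant weighted $L^2$ on $\partial D_1$, which holds because $\bar\om_a$ is a finite measure on a compact set and polynomials are dense in $C(\partial D_1)$ by Stone--Weierstrass, hence in $L^2(\partial D_1,\bar\om_a)$.

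I expect the main obstacle to be point (iii), completeness, rather than orthogonality. Orthogonality falls out cleanly from \eqref{eqn: Green's identity 2} and a parity argument, and requires only that the $\bar u_j$ are genuinely homogeneous of distinct degrees — which is immediate from their explicit form. The subtlety in completeness is matching the span of $\{\bar u_j|_{\partial D_1}\}$ with a Stone--Weierstrass-accessible class: one must verify that for each $j$ the solution $\bar u_j$ is the (unique up to scale) homogeneous $\bLa$-harmonic function of degree $2j+2s$ that is odd in $z_1$ and even in $z_2$, equivalently that the recursion $b_i = -\tfrac{(j-i+1)(j-i+1-s)}{i(i+s)}b_{i-1}$ is exactly the condition that $\bLa(|z_1|^{-a}z_1\bar Q_j)=0$ truncates a degree-$j$ polynomial — so that the leading coefficient $b_0$ may be chosen freely and all lower ones are then determined, giving a one-dimensional solution space in each degree. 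Combined with the observation that homogeneous solutions of all integer degrees $j\ge 0$, together with the constant, generate (after restriction) all polynomials on $\partial D_1$ in the variables adapted to the parity constraints, density follows. A cleaner alternative, which I would use if the direct approach gets technical, is to invoke Lemma~\ref{lem: monotone} and a standard separation-of-variables / Fredholm argument: any $g \in L^2(\partial D_1,\bar\om_a)$ orthogonal to every $\bar u_j$ and to $\iota_a$ can be $\bLa$-harmonically extended to $D_1$, and by expanding that extension in homogeneous pieces (using that $\bLa$ commutes with the dilation structure and that the degree-$d_j$ eigenspaces are one-dimensional on each parity class) one concludes the extension — hence $g$ — vanishes.
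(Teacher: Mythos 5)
Your proof is correct, and the orthogonality half is essentially identical to the paper's: unit norms by the normalization of $b_0$, orthogonality to $\iota_a$ by the odd/even parity of $\bar u_j$ and $\bar\om_a$, and mutual orthogonality of the $\bar u_j$ from the second Green's identity \eqref{eqn: Green's identity 2} together with $\pa_\nu\bar u_j=(2s+2j)\bar u_j$ on $\pa D_1$. Where you genuinely diverge is in the density argument. The paper restricts to the quarter circle $A=\pa D_1\cap\{z_1,z_2\ge 0\}$, changes variables via $\Phi(t)=(t,\sqrt{1-t^2})$, observes that $\Span\{\bar w_0,\dots,\bar w_l\}$ contains $t^{2s+2l}$, and then invokes the M\"untz--Sz\'asz theorem for the system $\{1,t^{2s+2l}\}$ (Stone--Weierstrass does not apply directly there, since fractional powers of $t$ do not form an algebra). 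You instead factor the fractional power $|z_1|^{2s}\operatorname{sgn}(z_1)$ out of the target function and absorb $|z_1|^{4s}$ into the weight, reducing to density of ordinary polynomials in $L^2(\pa D_1,|z_1|^{4s}\bar\om_a)$, which is still a finite measure on a compact set, so Stone--Weierstrass suffices. This is a valid and arguably more elementary route; it trades M\"untz--Sz\'asz for the (routine) bookkeeping of the modified weight. Both routes need the same non-degeneracy fact, namely that $\Span\{\bar u_0|_{\pa D_1},\dots,\bar u_l|_{\pa D_1}\}$ equals all of $|z_1|^{-a}z_1\cdot\{q(z_1^2):\deg q\le l\}$; this follows from a dimension count once orthogonality (hence linear independence) is known, so prove orthogonality first. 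Two caveats. First, your fallback ``Fredholm/separation-of-variables'' alternative is essentially circular: expanding an arbitrary $\bLa$-harmonic extension in homogeneous pieces presupposes the completeness you are trying to establish, so stick with the Stone--Weierstrass route. Second, you correctly note that the family can only span the odd-in-$z_1$, even-in-$z_2$ subspace of $L^2(\pa D_1,\bar\om_a)$; the lemma as stated in the paper is loose on this point (its reduction ``by symmetry'' to $A$ likewise only yields density in that symmetry class), but this is all that is used in the proof of Proposition~\ref{prop: 2d flat unzipped}, where $c_a=0$ and $u$ is odd in $z_1$ and even in $z_2$.
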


\begin{proof}
By construction, $\|\iota_a\|_{L^2(\pa D_1,\bar{\om}_{a})} = \| \bar{u}_j \|_{L^2(\pa D_1,\bar{\om}_{a})}=1$.
We show that $\{ \iota_a, \bar u_j : j = 0, 1,\dots \}$ is an orthogonal, dense set in $L^{2}(\partial D_1,\bar{\om}_{a})$.

We first treat the question of density. 
By symmetry, letting  $A := \partial D_1 \cap \{ z_1,z_2 \geq 0 \}$, it suffices to show that $\Span \{ \iota_a, \bar{u}_j: j = 0, 1, \dots \}$ is dense in $L^{2}(A,\bar{\om}_{a})$.
Furthermore, via the locally Lipschitz change of variables $\Phi : [0,1] \to A$ given by $\Phi(t) := (t, \sqrt{1-t^2})$ with Jacobian $J_{\Phi}(t) = (1-t^2)^{-1/2}$, this reduces to showing that $\Span \{ 1 , \bar{w}_j: j = 0, 1, \dots \}$ is dense in $L^{2}([0,1],\bar{\mu}_{a})$ where $\bar{w}_j : = \bar{u}_j \circ \Phi$ and $\bar{\mu}_a := (\bar{\om}_a \circ \Phi) J_{\Phi}$.
To this end, observe that for every $l \in \mathbb{N} \cup \{0\}$, there exist constants $c_j \in \R$ for $j = 0,1,\dots,l$ such that
\[
t^{2s+ 2l} = \sum_{j=0}^{l} c_j \bar{w}_j(t).
\]
The M\"untz-Sz\'asz theorem\footnote{\,The M\"untz-Sz\'asz theorem (see \cite{M} and \cite{Sz}) says that $\Span\{1, t^{p_i} : p_i > 0 \}$ for $i = 0, 1, \dots$ is dense in $C([0,1])$ provided that 
\[
\sum_{i = 0}^{\infty} \frac{1}{p_i} = \infty.
\]
} 
implies that the family $\Span \{1, t^{2s+2l} : l = 0,1, \dots\}$ is dense in $C([0,1])$.
Then, since $C([0,1])$ is dense in $L^2([0,1], \bar{\mu}_a)$, the question of density is settled.

We now show the set $\{ \iota_a, \bar{u}_j : j = 0, 1, \dots \}$ is orthogonal in $L^{2}(\partial D_1,\bar{\om}_{a})$.
First, since the functions $\bar{u}_j$ are odd in $z_1$ and $\bar{\om}_a$ is even, the inner product of $\bar{u}_j$ and $\iota_a$ in $L^{2}(\partial D_1,\bar{\om}_{a})$ is zero for every $j \in \N \cup \{ 0 \}$. 
Moreover, since $\bar{u}_j$ is homogeneous of degree $2s+2j$, one computes for any $z \in \pa D_1$,
\begin{equation*}
\begin{split}
\pa_\nu \bar{u}_j (z) &
= \frac{\d}{\d \l}\bigg{|}_{\l=1} \bar{u}_j(\l z) = (2s + 2j) \bar{u}_j(z).
\end{split}
\end{equation*}
Therefore, for $\bar{u}_k$ and $\bar{u}_j$, \eqref{eqn: Green's identity 2} becomes
\begin{equation*}
0 =(2k-2j) \int_{\pa D_1} \bar u_j \bar u_k \, \bar{\om}_a \, \d \sig,
\end{equation*}
as desired.
\end{proof}

We now prove Proposition~\ref{prop: 2d flat unzipped} (and thus, Proposition~\ref{prop: 2d flat zipped}) using Lemmas~\ref{lem: monotone} and \ref{lem: basis} and the local boundedness property for $\bar{L}_a$ (cf. \eqref{eqn: weak Harnack}).

\begin{proof}[Proof of Proposition~\ref{prop: 2d flat unzipped}]
By Lemma~\ref{lem: basis},
\[
u|_{\pa D_1} =c_a\iota_a +\sum_{j=0}^{\infty} c_j \bar{u}_j |_{\pa D_1}
\]
as a function in $L^2(\partial D_1, \bar{\om}_a)$, and $c_a = 0$ as $u$ is odd in $z_1$.

For every $k \in \N \cup \{0\}$, let $v_k :=  \sum_{j=0}^{k}  c_j \bar{u}_j$ and note that $\bLa v_k = 0$ in $D_1$.
So, applying Lemma~\ref{lem: monotone} and \cite[Corollary~2.3.4]{FKS} to $w_{k+1} := u - v_k$,
\[
\lim_{k\to \infty} \|u-v_k\|_{L^\infty(D_{1/2})} \leq C\lim_{k\to \infty} \|u - v_k\|_{L^2(D_1, \bar{\om}_a)} \leq \lim_{k\to \infty} C \|u - v_k\|_{L^2(\partial D_1, \bar{\om}_a)} = 0.
\]
Therefore,
\[ 
u = \sum_{j=0}^{\infty}  c_j \bar{u}_j \quad\text{in } D_{1/2}.
\]
Since $\|u\|_{L^{\infty}(D_1)} \leq 1$, it follows that $\sup_j |c_j| \leq 1$.
Consequently, 
\[
|w_{k+1}| \leq |u| + |v_k| \leq C
\]
for a constant $C$ depending only on $k$ and $a$.
For any $0<\l<1$, define the rescaling
\[
\tilde{w}(z) := \frac{w_{k+1}(\l z)}{\l^{2s+2k+2}},
\]
and observe that $|\tilde{w}|\leq C$ in $D_{1/2}$ thanks to the homogeneity of each term in $w_{k+1}$. 
Furthermore, the functions $\tilde{w}$ and $|z_1|^{-a}z_1$ vanish on $\{z_1 = 0\}$ and satisfy $\bLa \tilde{w} = \bLa (|z_1|^{-a}z_1) = 0$ in $D_{1/2}$. 
Applying the boundary Harnack estimate in $D_{1/2} \cap \{z_1 >0\}$ and recalling that $\tilde{w}$ and $|z_1|^{-a}z_1$ are odd in $z_1$, we find that 
\begin{equation*}
\label{eqn: BH bound}
|\tilde{w} | \leq C |z_1|^{2s} \quad\text{in }D_{1/4}.
\end{equation*}
Rescaling and letting $Q(z) := \sum_{j=0}^{k} c_j \bar{Q}_j(z_1^2,z_2^2)$, we deduce that
\[
|u -  |z_1|^{-a}z_1 Q| \leq C|z_1|^{2s}|z|^{2k+2},
\]
concluding the proof.
\end{proof}

With Proposition~\ref{prop: 2d flat zipped} in hand, we use a perturbative argument to prove the following, which in turn will allow us to conclude the proof of Proposition~\ref{prop: Schauder flat boundary}.

\begin{lemma}
\label{lem: 2 dim RHS} 
Suppose $u\in C(B_1)$ is even in $y$ with $\|u\|_{L^{\infty}(D_1)}\leq 1$, vanishes on $\{x \leq 0,\, y = 0 \}$, and satisfies
\begin{equation*}
\label{eqn: 2 dim RHS}
\La u(z) = | y|^a\bigg(\frac{U_a}{r}R(x,r)+ F(z)\bigg) \quad\text{in } D_1 \setminus \{x \leq 0,\, y = 0 \}
\end{equation*}
where $R(x,r)$ is a polynomial of degree $k$ with $\|R\| \leq 1$ and 
\begin{equation*}
\label{eqn: 2 dim remainder}
|F(z)| \leq r^{s-1}|z|^{k + \alpha}.
\end{equation*} 
Then, there exists a polynomial $P(x,r)$ of degree $k+1$ such that $\|P\| \leq C$ and
\begin{equation*}
| u - U_a P | \leq C U_a |z|^{k+1+\a}
\end{equation*}
for some constant $C = C(a,k,\a) >0$.
\end{lemma}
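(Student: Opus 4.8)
The plan is to prove Lemma~\ref{lem: 2 dim RHS} by the same perturbative iteration already used in the proof of Proposition~\ref{prop: HOBH k pos}, but now with $U_a$ (and $\bar U_a$ in the flat limit) playing the role of the comparison function $U$, and with a genuine polynomial right-hand side $R$ rather than the trivial one. First I would introduce the notion of an approximating polynomial $P(x,r)$ of degree $k+1$ for $u/U_a$ in two dimensions: writing $\La(U_a x^\mu r^m)$ as in Section~\ref{sec: Schauder Cka} (this computation is identical, only specialized to $n=2$), one sees that the leading coefficient identity
\[
A_{\sigma l} = (l+1)(l+2+2\sigma_n)p_{\sigma,l+1} + 2s(\sigma_n+1)p_{\sigma+\bar n,l} + (\sigma_i+1)(\sigma_i+2)p_{\sigma+2\bar\imath,l-1} + c_{\sigma l}^{\mu m}p_{\mu m}
\]
determines $P$ uniquely from the coefficients of $R$ and the values $p_{\mu 0}$; here, since $\{x\le 0,\,y=0\}$ is flat, the perturbation coefficients $c_{\sigma l}^{\mu m}$ vanish, so the system is in fact triangular with no small error and the map $R \mapsto P$ (with $p_{\mu 0}$ fixed) is bounded.

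Next I would run the one-step improvement. After an initial dilation making $\|R\|\le\vep$ and $|F(X)|\le \vep r^{s-1}|X|^{k+\a}$, and subtracting off the initial approximating polynomial $P^0$ (so that $\|u-U_aP^0\|_{L^\infty(B_1)}\le 1$ and $|\La[u-U_aP^0]|\le\vep|y|^ar^{s-1}|X|^{k+\a}$, using \eqref{eqn: approx poly error term HOBH} specialized here), I rescale $\tilde u(X) := [u-U_aP](\l X)/\l^{k+1+\a+s}$, compare $\tilde u$ to the $\bar L_a$-harmonic replacement $w$ vanishing on $\mathcal P_\l$, and use the fact that $\mathcal P_\l$ has uniformly positive $L_a$-capacity to get uniform Hölder estimates and hence compactness: $w\to\bar w$, where $\bar w$ is $a$-harmonic in $B_1\setminus\{x_n\le 0,\,y=0\}$ (here the slit is already a half-hyperplane, so no $\vep\to0$ flattening of $\G$ is needed, which actually simplifies things relative to Proposition~\ref{prop: HOBH k pos}). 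Proposition~\ref{prop: Schauder flat boundary}, which is now available since its proof occupies the rest of this section and does not depend on Lemma~\ref{lem: 2 dim RHS}, gives a degree-$(k+1)$ polynomial $\bar P$ with $\bar U_a\bar P$ $a$-harmonic and $\|\bar w - \bar U_a\bar P\|_{L^\infty(B_\rho)}\le C\rho^{k+2+s}$. Choosing $\rho$ then $\vep$ small, Lemma~\ref{lem: Simple Barriers} to control $v:=\tilde u - w$ by $\vep U_a$, and the fact that $\bar U_a$-approximating polynomials solve the homogeneous version of the coefficient system, I obtain a new approximating polynomial $P'$ for $u/U_a$ with $\|u-U_aP'\|_{L^\infty(B_{\rho\l})}\le(\rho\l)^{k+1+\a+s}$ and $\|P'-P\|\le C\l^{k+1+\a}$. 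Iterating along $\l=\rho^j$ produces a limiting approximating polynomial $P$ with $\|P\|\le C$ and $\|u-U_aP\|_{L^\infty(B_{\rho^j})}\le C\rho^{j(k+1+\a+s)}$, and the final rescaling-plus-barrier argument (exactly as in Step~2 of Proposition~\ref{prop: HOBH k pos}) upgrades this to the pointwise bound $|u-U_aP|\le CU_a|z|^{k+1+\a}$.

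The step I expect to be the main obstacle — or at least the one requiring the most care — is verifying that the approximating-polynomial machinery of Section~\ref{sec: Schauder Cka} really does carry over verbatim when the right-hand side error term $F$ has the weaker bound $|F(z)|\le r^{s-1}|z|^{k+\a}$ (rather than $|z|^{k+1+\a}$) and $R$ is a nontrivial polynomial: one must check that after subtracting $U_aP$ the residual right-hand side still has the form $|y|^a(U_a/r \cdot \tilde R + \tilde F)$ with $\|\tilde R\|$ small and $|\tilde F|\le\vep r^{s-1}|z|^{k+\a}$, so that the iteration closes at the same regularity level $k+1+\a$ throughout. This is really the content of the computation of $\La(U_a x^\mu r^m)$ in Section~\ref{sec: Schauder Cka}, so in the write-up I would simply invoke that computation (noting $n=2$ and that the slit is flat, so $c_{\sigma l}^{\mu m}=0$ and $h^{\mu m}\equiv 0$) and remark that, with $U_a$ in place of $U$ and $\bar U_a$ in place of $\bar U_a$, the proof is word-for-word that of Proposition~\ref{prop: HOBH k pos}; hence I would keep the argument short, presenting the definition of approximating polynomial and then stating that the one-step improvement and iteration proceed exactly as before.
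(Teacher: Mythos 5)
Your overall scheme is exactly the one the paper intends: the paper omits the proof of Lemma~\ref{lem: 2 dim RHS} precisely because it is the two-step improvement-of-flatness and iteration of Proposition~\ref{prop: HOBH k pos}, run with $U_a$ in place of $U$ and with the approximating-polynomial system of Section~\ref{sec: Schauder Cka} specialized to a flat slit in two dimensions (so $c^{\mu m}_{\sigma l}=0$ and $h^{\mu m}\equiv 0$, making the linear system exactly triangular). Your bookkeeping of degrees and exponents ($P$ of degree $k+1$, flat comparison at order $k+2+s$, target $k+1+\a+s$ per scale) is consistent, and your observation that no $\vep\to 0$ compactness in the geometry is needed because the slit is already a half-line is correct.

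There is, however, one genuine logical error in the write-up: you invoke Proposition~\ref{prop: Schauder flat boundary} to produce the polynomial $\bar P$ for the harmonic replacement $w$, and you assert that its proof ``does not depend on Lemma~\ref{lem: 2 dim RHS}.'' That is false: in the paper, Proposition~\ref{prop: Schauder flat boundary} is proved \emph{by bootstrapping} Lemma~\ref{lem: 2 dim RHS} (the $x'$-derivatives of $u$ produce a two-dimensional equation with right-hand side, to which the lemma is applied iteratively), so citing it here is circular. The correct reference is Proposition~\ref{prop: 2d flat zipped}, which is established earlier in the section by the spectral/orthonormal-basis argument and is logically independent of the lemma. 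It applies directly to $w$ (which is $a$-harmonic in $D_1\setminus\{x\le 0,\,y=0\}$, vanishes on the slit, and is bounded by $1$) and, used with $k+1$ in place of $k$, yields a degree-$(k+1)$ polynomial $\bar P$ with $U_a\bar P$ $a$-harmonic and $|w-U_a\bar P|\le CU_a|z|^{k+2}$, hence $\|w-U_a\bar P\|_{L^\infty(D_\rho)}\le C\rho^{k+2+s}$, which is exactly the input your iteration needs. With that substitution the argument closes; minor slips (calling the replacement ``$\bar L_a$-harmonic'' rather than $L_a$-harmonic, and the phrase ``$\bar U_a$ in place of $\bar U_a$'') should also be cleaned up, but they do not affect the substance.
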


The proof of Lemma~\ref{lem: 2 dim RHS} follows the two step improvement of flatness and iteration procedure given in the proof of Proposition~\ref{prop: HOBH k pos},
and so we omit it.\footnote{\,Since $U_a$ and $r$ are homogeneous when $\G$ is flat and $U_a$ is $a$-harmonic away from the set $\{x_n \leq 0 ,\, y = 0\}$, the notion of approximating polynomials and proof are much simpler.}

We now prove Proposition~\ref{prop: Schauder flat boundary}.
\begin{proof}[Proof of Proposition~\ref{prop: Schauder flat boundary}]
The equation \eqref{eqn: G flat eq} is invariant with respect to $x'$, so any partial derivative of $u$ in an $x'$-direction also satisfies \eqref{eqn: G flat eq}. 
Furthermore, the set $\{x_n \leq 0,\, y = 0 \}$, where $u$ vanishes, has uniformly positive $\La$-capacity.
It follows that solutions of \eqref{eqn: G flat eq} are uniformly H\"older continuous in compact subsets of $B_1$.
In particular, as $\|u\|_{L^{\infty}(B_1)} \leq 1$, we see that for any multi-index $\mu$,
\begin{equation}
\label{eqn: La cap}
\|D^\mu_{x'} u\|_{C^{0,\tau}(B_{1/2})} \leq C
\end{equation}
for some constant $C = C(a,|\mu|) > 0$; that is, $u \in C^{\infty}_{x'}(B_{1/2})$.

With the regularity of $u$ in $x'$ understood, we turn to understanding the regularity of $u$ in $z$.
To this end, notice that 
\[
\La u  =|y|^a\Delta_{x'} u + \DIV_{z}(|y|^a\nab_{z}u).
\]
So, for any fixed $x'$, $u$ satisfies
\[
\begin{cases}
\La u = -|y|^a \Delta_{x'} u & \text{in } D_{1,x'} \setminus \{x_n \leq 0,\, y = 0\}\\
u=0 & \text{on } \{x_n \leq 0 ,\, y=0\}
\end{cases}
\]
as a function of $z$. 
(Here, $\La$ is seen as a two-dimensional operator.)
Let
\[
f := - \Delta_{x'} u.
\]
By \eqref{eqn: La cap}, $f$ is $C^{0,\tau}$ in compact subsets of $B_1$.
Up to multiplying by a constant, we may assume that $\|f\|_{C^{0,\tau}(B_{1/2})} \leq 1$.
Then, since $f(x', 0) = 0$ for every $x'$, viewed just as a function in $z$, we see that
\[
|f(z)| \leq  |z|^{\tau} \leq  r^{s-1} |z|^{\tau}.
\]
In particular, $u$ satisfies the hypotheses of Lemma~\ref{lem: 2 dim RHS} with $k=0$ taking $R(x_n,r) \equiv 0$ and $F(z) = f(z)$.
Applying Lemma~\ref{lem: 2 dim RHS}, we find a degree 1 polynomial $P_0(x_n,r)$ such that 
\[
| u - U_a P_0|  \leq  C U_a |z|^{1+\tau}.
\]
As $u$ and $f$ have the same regularity in $z$, it follows that
\[ 
f(z) = U_a Q_0(x_n,r) + U_a O(|z|^{1+\tau})
\] 
for a polynomial $Q_0(x_n,r)$ of degree $1$. 
Equivalently,
\[
f(z) = \frac{U_a}{r}R(x_n,r) + F(z)
\]
where $R(x_n,r) = rQ_0(x_n,r)$ is a degree $2$ polynomial and, up to multiplication by a constant, $\|R\| \leq 1$ and $|F(z)| \leq r^{s-1}|z|^{2+\tau}$.
In this way, we can bootstrap Lemma~\ref{lem: 2 dim RHS} with $k = 2j$ to find polynomials $P_{j}(x_n,r)$ of degree $2j+1$ such that
\[
|u - U_aP_{j}| \leq CU_a|z|^{2j+1+\tau}.
\]
In other words, for each $x'$, there exists $\phi_{x'} \in C^\infty(D_{1/2, x'})$ such that 
\[ 
u(x',z) = U_a(z) \phi_{x'}(z).
\]

Since $u$ is smooth in $x'$ and $U_a$ is independent of $x'$, $\phi_{x'}(z) = \Phi(x',z)$ is a smooth function of $x'$ as well.
So,
\[
u(x',z) = U_a(z) \Phi(x',z).
\]
This proves \eqref{eqn: flat implies smooth}.

Finally, that $U_aP$ is $a$-harmonic in $B_1 \setminus \mathcal{P}$ follows by induction: decompose $P$ into its homogeneous parts $P(x,r) = \sum_{m =0}^k P_m(x,r)$, where each $P_m(x,r)$ is a homogeneous polynomial of degree $m$.
If $m = 0$, then $\La (U_aP) = 0$ in $B_1 \setminus \{x_n \leq 0,\, y = 0 \}$.
Assuming $U_a\sum_{m =0}^l P_m(x,r)$ is $a$-harmonic in $B_1 \setminus \{x_n \leq 0,\, y = 0 \}$ for all $l < k$,
we find that
\begin{equation}
\label{eqn: induction expansion}
v := u - U_a\sum_{m =0}^l P_m(x,r) = U_a\big( P_{l+1}(x,r) + o(|X|^{l+1})\big)
\end{equation}
is $a$-harmonic in $B_1 \setminus \{x_n \leq 0,\, y = 0 \}$.
Now, consider the rescalings of $v$ defined by
\[
\tilde{v}(X) := \frac{v(\l X)}{\l^{l+1+s}}.
\]
Observe that $\La \tilde{v} = 0$ in $B_1 \setminus \{x_n \leq 0,\, y = 0 \}$ and that $\tilde{v} \to U_aP_{l+1}$ as $\l \to 0$ by \eqref{eqn: induction expansion}.
Therefore, $\La (U_aP_{l+1}) = 0$ in $B_1 \setminus \{x_n \leq 0,\, y = 0 \}$, which concludes the proof.
\end{proof}

%~~~~~~~~~~~~~~~~~~~~~~~~~~~~~~~~~~~~~~~~~~~~~~~~~~~~~~~%
\section{Proof of Theorem~\ref{thm: main}}
\label{sec: Proof of Main Thm}

In this section, we prove Theorem~\ref{thm: main}.
Up to multiplication by a constant, we may assume that 
\[
\| \vphi \|_{C^{m,\beta}(\R^n)} \leq 1 \qquad\text{and}\qquad \|v - \vphi \|_{L^\infty(\R^n)} \leq 1;
\] 
and after a translation, rotation, and dilation, we can assume that the origin is a regular point of $\G$, that $\G \cap B_2^*$ can be written as the graph of a $C^{1,\sigma}$ function of $n-1$ variables, and that $\pa_{\R^n} \P = \G$ is locally given by 
\[
\G  = \{ (x',\gamma(x'),0) :  x'\in B_1' \}
\]
where $\gamma : B'_1 \to \R$ is such that
\[
\gamma(0) = 0, \qquad \nab_{x'}\gamma(0) = 0, \qquad\text{and}\qquad \|\gamma\|_{C^{1,\alpha}(B'_1)} \leq 1.
\] 
Here, $\a := \min\{\beta,\sig\}$. 
Now, recalling the discussion following the statement of Theorem~\ref{thm: DS5 HOBH}, our goal is to apply Proposition~\ref{prop: HOBH 1 pos} and then iteratively apply Proposition~\ref{prop: HOBH k pos} to produce a polynomial $P$ of degree $m-2$ for which, after restricting to the hyperplane $\{y=0\}$, we have
\begin{equation*}
\bigg| \frac{\pa_i(v-\vphi)}{ \pa_n(v - \vphi) }-P \bigg| \leq C|x|^{m-2+\a}.
\end{equation*} 

The functions $\pa_i(v-\vphi)$ and $\pa_n(v - \vphi)$ are only defined on $\R^n$, so, to apply our higher order boundary Harnack estimates, we must first extend $v$ and $\vphi$ to $\R^{n+1}$.
Following the notation set in the introduction, we denote  the (even in $y$) $a$-harmonic extension of our solution $v$ by $\tilde{v}$, which satisfies \eqref{eqn: obstacle problem local}.
Choosing an extension $\tilde{\vphi}$ for the obstacle to $\R^{n+1}$ is less straightforward, as our choice governed by the need for the pair $u = \pa_i(\tilde{v} - \tilde{\vphi})$ and $U = \pa_n(\tilde{v} - \tilde{\vphi})$ to satisfy the hypotheses of Propositions~\ref{prop: HOBH 1 pos} and \ref{prop: HOBH k pos}.
The primary challenge is to show that $U$ will satisfy the derivative estimates
%most challenging of the hypotheses and the foci of this section are 
\eqref{eqn: grad x C1a}, \eqref{eqn: partial y C1a}, \eqref{eqn: grad x}, and \eqref{eqn: grad r}.
With this in mind, we define
\begin{equation}\label{eqn: expansion}
\tilde{\vphi}(X) := \vphi(x) + \sum_{j = 1}^{\lfloor\frac{m}{2}\rfloor+ 1} \frac{(-1)^{j}}{c_j} y^{2j}\Delta^j T^{0}(x)
\end{equation}
where $c_0 := 1$, $c_j := 2j(2j+a-1)c_{j-1}$, and $T^{0} = T^0(x)$ is the $m$th order Taylor polynomial of $\vphi$ at the origin.
The coefficients $c_j$ are chosen such that 
\[
\La \tilde{\vphi}(X) = |y|^a\Delta (\vphi - T^0)(x).
\]
Set
\begin{equation}
\label{eqn: tilde u 0}
w(X) := \tilde{v}(X) - \tilde{\vphi}(X),
\end{equation}
For any $i \in 1,\dots, n$, \cite[Proposition 4.3]{CSS} implies that $\pa_i w \in C^{0,\delta}(B_{1})$ for all $0<\delta<s$, and, up to multiplication by universal constant, $\| \pa_i w\|_{C^{0,\delta}(B_{1})} \leq 1$. 
So, $\pa_i w$ satisfies
\begin{equation}\label{eqn: equation}
\begin{cases}
\La \partial_i w = |y|^a f_i &\text{in } B_1 \setminus \P \\
\partial_i w = 0 &\text{on } \P
\end{cases}
\qquad\text{and}\qquad
f_i := \Delta (\pa_i \vphi - \pa_i T^{0}).
\end{equation}
By construction, $f_i = f_i(x)$ is of class $C^{m-3,\b}$ and
\begin{equation}\label{eqn: bound}
|f_i| \leq |x|^{m-3+\b}.
\end{equation}
Again, in order to apply the higher order boundary Harnack estimates we must justify \eqref{eqn: grad x C1a}, \eqref{eqn: partial y C1a}, \eqref{eqn: grad x}, and \eqref{eqn: grad r} for $U = \pa_n w$.
We do this in two propositions.
The following addresses \eqref{eqn: grad x} and \eqref{eqn: grad r}, while Proposition~\ref{lem: derivative est C1a} below addresses \eqref{eqn: grad x C1a} and \eqref{eqn: partial y C1a}.

\begin{proposition}
\label{prop: deriv estimates} 
Let $\G \in C^{k+2, \a}$ with $ 0 \leq k\leq m-3$ and $\|\G\|_{C^{k+2,\a}}\leq 1$.
Let $U:=\pa_n w$ for $w$ as defined in \eqref{eqn: tilde u 0}.
Let $P_0(x,r)$ be the polynomial of degree $k+1$ obtained from Proposition~\ref{prop: pointwise Schauder Cka}.
Then,
\begin{equation}
\label{eqn: partial in i}
\na_x U  = \frac{U_a}{r}\Big( s P_0 \nu + r \na_x P_0  + (\pa_r P_0 ) d \nu + O(|X|^{k+1+\a})\Big)
\end{equation}
and
\begin{equation}
\label{eqn: partial in r}
\nab U \cdot \nab r = \frac{U_a}{r} \Big( sP_0 + (\pa_r P_0) r + \na_x P_0 \cdot (d\nu) + O(|X|^{k+1+\a})\Big).
\end{equation}	
\end{proposition}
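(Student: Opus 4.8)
The plan is to obtain the expansion of $U$ from the Schauder estimate already in hand, to differentiate the explicit main term of that expansion by hand, and then to prove that the derivatives of the Schauder remainder have the claimed growth; this last point is the only substantive step. First, since $U=\pa_n w$ satisfies $\La U=|y|^a f_n$ in $B_1\setminus\P$ with $f_n=f_n(x)$ of class $C^{m-3,\b}$ and $|f_n|\le|x|^{m-3+\b}$ by \eqref{eqn: bound}, and since $k\le m-3$ and $\a\le\b$ give $m-3+\b\ge k+\a$, the inhomogeneity $f_n$ fits the hypotheses of Proposition~\ref{prop: pointwise Schauder Cka} in the role of $F$ with $R\equiv 0$ (using $|x|\le|X|\le 1$ and $r^{s-1}\ge 1$ in $B_1$). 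That proposition produces $P_0(x,r)$ of degree $k+1$ with $\|P_0\|\le C$ such that, writing $E:=U-U_aP_0$,
\[
|E|\le CU_a|X|^{k+1+\a},\qquad |\La E|\le C|y|^a r^{s-1}|X|^{k+\a}\quad\text{in }B_1\setminus\P,
\]
with $E$ even in $y$ and $E=0$ on $\P$. Using $\nabla_xU_a=\tfrac{s}{r}U_a\nu$, $\nabla_x r=\tfrac{d}{r}\nu$, $\nabla_x d=\nu$, $|\nabla r|\equiv 1$, and $\nabla r\cdot\nabla d=\tfrac{d}{r}$ from Section~\ref{subsec: Notation}, a direct computation gives $\nabla_x(U_aP_0)=\tfrac{U_a}{r}\big(sP_0\nu+r\nabla_xP_0+(\pa_rP_0)d\nu\big)$ and $\nabla(U_aP_0)\cdot\nabla r=\tfrac{U_a}{r}\big(sP_0+(\pa_rP_0)r+\nabla_xP_0\cdot(d\nu)\big)$, which are exactly the explicit terms in \eqref{eqn: partial in i} and \eqref{eqn: partial in r}. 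Hence it suffices to establish
\[
|\nabla_x E(X)|+|\nabla E(X)\cdot\nabla r(X)|\le C\,\tfrac{U_a(X)}{r(X)}\,|X|^{k+1+\a}
\]
in a neighborhood of the origin.

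To prove this remainder bound I would argue by a covering-and-rescaling procedure, distinguishing points $X$ by their position relative to $\G$ and $\P$. When $\dist(X,\P)$ is comparable to $r(X)$ (which, as one checks, is automatic on the non-contact side and forces $|y|\sim r(X)$ on the contact side, and in both cases yields $U_a\sim r^s$), the ball $B_{cr(X)}(X)$ avoids $\P$ and the weight $|y|^a$ is comparable to a constant on it (or, if the ball meets a non-contact portion of $\{y=0\}$, one reflects evenly and invokes Proposition~\ref{prop: derivative estimates}); the interior gradient estimate then bounds $|\nabla E(X)|$ by $r(X)^{-1}\sup_{B_{cr(X)}(X)}|E|$ plus a contribution from $\La E$ of the same or lower order (here one uses $r\le|X|$), so that $|\nabla E(X)|\lesssim \tfrac{U_a(X)}{r(X)}|X|^{k+1+\a}$. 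When $X$ is close to $\P\setminus\G$, where $\P$ is locally a $C^{k+2,\a}$ graph over $\{y=0\}$, one reflects $E$ evenly across $\{y=0\}$ so that $E$ vanishes there and $\La E=|y|^a g$, and applies Corollaries~\ref{cor: basic bd reg}, \ref{cor: y bd reg 2}, \ref{cor: upgrade f is C1}, and \ref{cor: upgrade f is 0} (whose hypotheses match the $x$-dependence and the $r$-polynomial structure of $g$), obtaining $|\nabla_x E|\lesssim|y|^{2s}(\cdots)$; since $U_a\sim|y|^{2s}$ there, rescaling by $\dist(\cdot,\G)$ and inserting $|E|\le CU_a|X|^{k+1+\a}$ matches the desired bound. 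Finally, near $\G$ one rescales by $\dist(\cdot,\G)$: the rescaled slits converge in $C^{k+2,\a}$ to the boundary of a hyperplane, $U_a$ converges to $\bar{U}_a$, and the rescaled remainders are governed by the flat analysis of Proposition~\ref{prop: Schauder flat boundary}, which closes the induction underlying the covering.

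For the quantity $\nabla E\cdot\nabla r=\tfrac{d}{r}\pa_\nu E+\tfrac{y}{r}\pa_y E$, the first summand is controlled by the $\nabla_x E$ bound just obtained. The second requires, in addition, a bound $|y|^a|\pa_y E|\lesssim|y|^a\tfrac{U_a}{r}|X|^{k+1+\a}$: this follows from the same localization, since $\pa_y$ of the right-hand side of $\La E$ is of the right order — the $y$-dependence of $\La E$ entering only through the correction generated by $U_aP_0$, whereas $f_n=f_n(x)$ — so that $|y|^a\pa_y E$ satisfies an $L_{-a}$-equation with controlled data and Corollary~\ref{cor: basic bd reg} (or \ref{cor: y bd reg 2}) applied to suitable rescalings gives the estimate; dividing by $r$ yields the claim.

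The main obstacle is the patching of these regimes: $U_a$ degenerates like $\dist(\cdot,\G)^s$ in the bulk but like $|y|^{2s}$ near the interior of the slit, while $\La E$ carries the singular weight $r^{s-1}$, so the rescalings and the families of estimates used must be matched carefully across the transition near $\G$, where moreover the reflected-domain estimates of Section~\ref{sec: Prelim} only apply after a preliminary blow-up reducing $\P$ to a half-hyperplane. A second delicate point is the borderline case $k=m-3$: then $f_n$ is merely $C^{m-3,\b}$, so its derivatives are only bounded and the estimates retain no slack beyond the $\b$-Hölder regularity of $\vphi$ — which is exactly why the polyharmonic-type extension $\tilde\vphi$ in \eqref{eqn: expansion}, producing the $x$-only right-hand side of \eqref{eqn: equation}, is used in the first place.
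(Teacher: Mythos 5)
Your opening reduction is sound and matches the paper's starting point: $U=\pa_n w$ does satisfy the hypotheses of Proposition~\ref{prop: pointwise Schauder Cka} with $R\equiv 0$ and $F=f_n$, and your computation of $\na_x(U_aP_0)$ and $\na(U_aP_0)\cdot\na r$ correctly identifies the main terms of \eqref{eqn: partial in i} and \eqref{eqn: partial in r}, so the proposition does reduce to bounding the derivatives of $E:=U-U_aP_0$. The gap is in the remainder estimate, and it is exactly the point the paper flags as delicate. The localized derivative estimates (Lemma~\ref{lem: derivative est} in the paper, and implicitly your "regimes") are produced by rescaling around the center at which the data is normalized, and they are therefore only available in the cone $\mathcal{K}=\{|z|>|x'|\}$ based at that center, where $\dist(X,\G)\sim|X|$ and the half-ball reflection results of Section~\ref{sec: Prelim} apply with uniformly controlled right-hand sides \emph{because $f_n$ vanishes to order $m-3+\b$ at the origin}. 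At a point $X$ with $r(X)\ll|X|$, your covering forces a rescaling around the projection $X_0\in\G$ of $X$, and there the data is centered wrongly: $f_n=\Delta(\pa_n\vphi-\pa_n T^0)$ has no vanishing at $x_0$, and $P_0$ is tied to the origin. The paper's resolution --- entirely absent from your proposal --- is to introduce a \emph{different} extension $\tilde{\vphi}^{x_0}$ for every $x_0\in\G\cap B^*_{1/2}$, built from the Taylor polynomial $T^{x_0}$ of $\vphi$ at $x_0$, apply Lemma~\ref{lem: derivative est} to $U^{X_0}=\pa_n(\tilde v-\tilde{\vphi}^{x_0})$ in the cone $\mathcal{K}_{X_0}$, and then patch via a triangle inequality that controls two extra terms: the change of equation, $|\pa_i U^0-\pa_i U^{X_0}|\le C\sum_j y^{2j}|\Delta^j\pa_{in}(T^{x_0}-T^0)|\le C\frac{U_a}{r}|X|^{m-2+\a}$, and the change of approximating polynomial, $\|P^i_{X_0}-P^i_0\|$, estimated by comparing the two expansions on a ball $B_{\l/2}(2\l\e_n)$ contained in $\mathcal{K}\cap\mathcal{K}_{X_0}$. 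Your sentence asserting that the flat analysis of Proposition~\ref{prop: Schauder flat boundary} "closes the induction underlying the covering" does not correspond to an argument: that proposition concerns exactly $a$-harmonic functions and supplies no mechanism for making the local estimates uniform over base points of $\G$; without the recentered extensions (or a quantitative substitute, which you do not supply) the covering does not close.

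A secondary but genuine error: the bound you declare necessary for the $y$-derivative, $|\pa_y E|\lesssim\frac{U_a}{r}|X|^{k+1+\a}$, is both stronger than what is needed and not what the available estimates give. Since the term to control in \eqref{eqn: partial in r} is $\frac{y}{r}\pa_y E$, the correct target is $|\pa_y E|\lesssim\frac{U_a}{|y|}|X|^{k+1+\a}$, equivalently $\big||y|^a\pa_y E\big|\lesssim (r-d)^{-s}|X|^{k+1+\a}$, which is weaker by the factor $r/|y|$; this is the form delivered by Corollaries~\ref{cor: basic bd reg} and \ref{cor: y bd reg 2} after rescaling (cf.\ \eqref{eqn: partial in y expansion}), whereas near the interior of the slit one has $|\pa_y U_a|\sim U_a/|y|\gg U_a/r$, so the stronger bound you state should not be expected to hold for $E$ either.
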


As we shall see, to prove Proposition~\ref{prop: deriv estimates}, we stitch together a family of analogous estimates in overlapping cones based at points on $\G$ in a neighborhood of the origin.
In the cone 
\begin{equation}
\label{def: cone}
\mathcal{K} := \{ |z| > |x'| \},
\end{equation}
these estimates are given by the following lemma. 

\begin{lemma}
\label{lem: derivative est}
Fix $m \geq 4$ and $ 0 \leq k\leq m-3$. 
Let $\G \in C^{k+2, \a}$ with $\|\G\|_{C^{k+2,\a}}\leq 1/4$.  
Suppose $U\in C(B_1)$ is even in $y$ with $\|U\|_{L^\infty(B_1)} \leq 1$ and
\[
\begin{cases} L_a U = |y|^a f & \text{in } B_1 \setminus \mathcal{P}\\
U = 0 & \text{on } \mathcal{P}
\end{cases}
\]
where $f = f(x)$ and $f \in C^{m-3,\a}(B_1)$ is such that  $\| f \|_{C^{m-3,\a}(B_1)} \leq 1$ with vanishing derivatives up to order $m-3$ at the origin.
Let $P = P_0$ be the polynomial of degree $k+1$ obtained in Proposition~\ref{prop: pointwise Schauder Cka}. 
Then,
\begin{equation}
\label{eqn: partial in x_i expansion}
|\partial_i U -  \pa_i(U_a P)| \leq C\frac{U_a}{r}|X|^{k+1+\a} \quad \text{in } \mathcal{K}
\end{equation}
and
\begin{equation}
\label{eqn: partial in y expansion}
| \partial_y U - \partial_y(U_aP)| \leq C|y|^{-a}|X|^{k+1+\a-s} \quad\text{in }\mathcal{K}\end{equation}
for some constant $C = C(a,n,k,\a) > 0$.
% As a consequence, \eqref{eqn: partial in i} and \eqref{eqn: partial in r} hold for $U$ in $\mathcal{K}$.
%Here, $P_i(x,r)$ is obtained by formally differentiating $U_aP$ in the $x_i$-direction.
\end{lemma}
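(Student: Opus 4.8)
The plan is to reduce the two inequalities to pointwise growth estimates for $v:=U-U_aP$, where $P=P_0$ is the polynomial of Proposition~\ref{prop: pointwise Schauder Cka}. Since $f=f(x)$ has vanishing derivatives up to order $m-3\ge k$ at the origin and $\|f\|_{C^{m-3,\a}(B_1)}\le1$, one has $|f(x)|\le|x|^{m-3+\a}\le r^{s-1}|X|^{k+\a}$, so $U$ satisfies the hypotheses of Proposition~\ref{prop: pointwise Schauder Cka} with $R\equiv0$, $F=f$; this yields $\|P\|\le C$, $|U-U_aP|\le CU_a|X|^{k+1+\a}$, and
\[
\La v=|y|^a g,\qquad g:=f(x)-\tfrac{U_a}{r}h(x,r),\qquad |g|\le Cr^{s-1}|X|^{k+\a},
\]
where $h$ is the explicit error from the computation of $\La(U_ax^\mu r^m)$ in Section~\ref{sec: Schauder Cka} (with the coefficients of $P$ matched to the vanishing right-hand side, so $\La(U_aP)=|y|^a\tfrac{U_a}{r}h$). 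Moreover $v$ is even in $y$, and since $U_a$ vanishes on $\P$ so does $v$. Because $\pa_iU-\pa_i(U_aP)=\pa_iv$ and $\pa_yU-\pa_y(U_aP)=\pa_yv$, the lemma is exactly the assertion that $|\pa_iv|\le C\tfrac{U_a}{r}|X|^{k+1+\a}$ and $|\pa_yv|\le C|y|^{-a}|X|^{k+1+\a-s}$ on $\mathcal{K}\cap B_{1/2}$.

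The role of the cone $\mathcal{K}=\{|z|>|x'|\}$ is to keep us uniformly away from the edge $\G$. For $X_0\in\mathcal{K}\cap B_{1/2}$ set $\rho:=|X_0|$; using $\|\G\|_{C^{k+2,\a}}\le\tfrac14$, hence $|\g(x')|\le\tfrac18|x'|^2$, an elementary case split (according to whether $|y(X_0)|\ge\rho/2$ or $|x_n(X_0)|\ge\rho/2$) gives $\dist(X_0,\G)\ge c\rho$ for a universal $c>0$, so on $B_{c\rho}(X_0)$ one has $r\sim\rho$, $U_a\le C\rho^s$, $|v|\le C\rho^{s+k+1+\a}$, $|g|\le C\rho^{s-1+k+\a}$, and the ball meets at most the flat part of $\{y=0\}$. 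I would then treat three cases, in each rescaling $B_{c\rho}(X_0)$ (re-centered at $(x'(X_0),x_n(X_0),0)$ when $|y(X_0)|<c\rho/2$) to unit scale and invoking an estimate from Section~\ref{sec: Prelim}. \emph{(i)} If $\dist(X_0,\{y=0\})\ge c\rho$, the operator is uniformly elliptic on $B_{c\rho}(X_0)$ and interior Schauder gives $|\nab v(X_0)|\le C\rho^{s+k+\a}$, which matches both targets since there $U_a/r\sim|y|^{-a}|X|^{-s}\sim\rho^{s-1}$. \emph{(ii)} If $X_0$ is near $\{y=0\}\setminus\P$ (so $d>0$ on $B_{c\rho}(X_0)$), then $v$ is even but need not vanish; $U_a$ is smooth across this flat piece and $\pa_y(\tfrac{U_a}{r}h)=O(|y|)$ there, so $|y|^a\pa_y g\in L^\infty$ with the right size, and Proposition~\ref{prop: derivative estimates}, applied to the even reflection of $v$, bounds $\nab_xv$ and $|y|^a\pa_yv$; rescaling yields the two inequalities (the $\pa_y$ one borderline, using $a+2s=1$). \emph{(iii)} If $X_0$ is near the interior of $\P$ (so $v=0$ on $B_{c\rho}(X_0)\cap\{y=0\}\subset\P$ and $d<0$), after rescaling $\hat v=0$ on a flat piece of $\{y=0\}$, $\|\hat v\|_{L^\infty}\le C$, $\|\hat g\|_{L^\infty}\le C$, and I would prove $|\pa_i\hat v|\le C|y|^{2s}$ and $|\pa_y\hat v|\le C|y|^{-a}$ in $B_{1/4}$; these rescale exactly to $|\pa_iv|\le C|y|^{2s}\rho^{k+\a-s}$ and $|\pa_yv|\le C|y|^{-a}\rho^{k+1+\a-s}$, and near $\P$ one has $U_a=|y|^{2s}/(2^s(r-d)^s)$ with $r-d\sim|x_n|\sim\rho$, so $\tfrac{U_a}{r}|X|^{k+1+\a}\sim|y|^{2s}\rho^{k+\a-s}$, matching the claims.

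The main obstacle is case \emph{(iii)}, since $\hat g$ is neither a function of $x$ alone nor vanishing on $\{y=0\}$, so neither Corollary~\ref{cor: upgrade f is C1} nor Corollary~\ref{cor: upgrade f is 0} applies directly. The point is the splitting $\hat g=\hat f(x)+\hat g_2$, where $\hat f\in C^{0,1}$ and, by \eqref{eqn: Ua property} and $r-d\sim1$ near $\P$ at unit scale, $\hat g_2=-\tfrac{\widehat{U_a}}{\hat r}\hat h$ vanishes on $\{y=0\}$ with $|y|^a\pa_y\hat g_2$ and $|y|^a\pa_i\pa_y\hat g_2$ bounded (the only borderline exponent being $2s-1+a=0$). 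I would then redo the barrier argument of those corollaries: for the odd extension $\overline{\pa_i\hat v}$ one obtains
\[
L_{-a}\bigl(|y|^a\pa_y\overline{\pa_i\hat v}\bigr)=2\,\pa_i\hat f(x)\,\mathcal{H}^n\llcorner\{y=0\}+(\text{bulk term bounded by }C|y|^{-a}),
\]
and adding the barrier $\tfrac{M_1}{1+a}|y|^{a+1}$ (for which $L_{-a}\bigl(\tfrac{1}{1+a}|y|^{a+1}\bigr)=2\,\mathcal{H}^n\llcorner\{y=0\}$) to $\tfrac{M_2}{2(1-a)}|y|^2$ (for which $L_{-a}\bigl(\tfrac{1}{2(1-a)}|y|^2\bigr)=|y|^{-a}$) controls both sources; local boundedness then gives $|y|^a|\pa_y\pa_i\hat v|\le C$, and integrating in $y$ from $\{y=0\}$, where $\pa_i\hat v$ vanishes, yields $|\pa_i\hat v|\le C|y|^{2s}$. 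The bound on $\pa_y\hat v$ follows identically from $L_{-a}(|y|^a\pa_y\hat v)=2\hat f(x)\,\mathcal{H}^n\llcorner\{y=0\}+(\text{bulk})$. The remaining work is the bookkeeping that all exponents line up, which they do, if only borderline.
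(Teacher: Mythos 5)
Your overall strategy coincides with the paper's: apply Proposition~\ref{prop: pointwise Schauder Cka} with $R\equiv 0$, set $v=U-U_aP$, rescale to unit size at scale $|X_0|$ inside the cone (where $\dist(X_0,\G)\gtrsim |X_0|$), split the right-hand side into the piece $f(x)$ and the piece $-\tfrac{U_a}{r}h$ that vanishes on $\{y=0\}$ near the interior of $\P$, and invoke the boundary regularity toolkit of Section~\ref{sec: Prelim}. The paper organizes the geometry by rescaling whole annuli and splitting $\mathcal{K}$ into $\mathcal{K}^\pm$ rather than your three cases, and it cites Corollaries~\ref{cor: basic bd reg}--\ref{cor: upgrade f is 0} instead of re-deriving them with a combined barrier, but these are cosmetic differences. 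Your exponent bookkeeping, including the two borderline identities $a+2s=1$, matches the paper's.

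There is, however, one genuine gap, in your case \emph{(iii)}. For the bound $|\pa_i\hat v|\le C|y|^{2s}$ you differentiate the equation in $x_i$ \emph{first} and then pass to the conjugate, so you need $|y|^a\pa_i\pa_y\hat g_2$ to be bounded, i.e.\ you need $\pa_i$ of the Schauder error $h(x,r)=\sum_m r^m h_m(x)$ to exist with quantitative bounds. For $k\ge 1$ this is fine, but for $k=0$ (which the lemma must cover, and which is used in the first step of the iteration in Section~\ref{sec: Proof of Main Thm}) one only has $\G\in C^{2,\a}$, hence $\kappa\in C^{0,\a}$ and $h_0\in C^{0,\a}$: the tangential derivative $\pa_i h_0$ need not exist, so the barrier inequality $L_{-a}v_\pm\gtrless 0$ cannot be verified for that source term. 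The paper's Corollary~\ref{cor: upgrade f is 0} avoids exactly this by reversing the order of operations: one first forms the conjugate $w=|y|^a\pa_y \hat v_2$, whose equation $L_{-a}w=|y|^{-a}(|y|^a\pa_y \hat g_2)$ requires only $|y|^a\pa_y\hat g_2\in L^\infty$ (no $x$-derivative of $h$), and then applies the local-boundedness gradient estimate of Proposition~\ref{prop: derivative estimates} to $w$, which needs only $L^\infty$ control of its right-hand side; this yields $||y|^a\pa_y\pa_i\hat v_2|\le C$ and hence $|\pa_i\hat v_2|\le C|y|^{2s}$ after integration. Your argument for $\pa_y\hat v$ (where no $x$-derivative of the data is taken) is unaffected. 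So the fix is simply to treat the $\hat g_2$-contribution to the tangential bound by the conjugate-first route, reserving your surface-measure barrier for the $\hat f(x)$-contribution only.
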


\begin{proof}
Since $|\La U| \leq |y|^a r^{s-1}|X|^{k+\a}$, Proposition~\ref{prop: pointwise Schauder Cka} can indeed be applied to obtain an approximating polynomial $P(x,r)$ for $U/U_a$ of degree $k+1$ with $\| P\| \leq C$ such that
\[
|U-U_aP| \leq CU_a|X|^{k+1+\a}. 
\]
%By assumption, $\G \subset \{ 4|z| \leq |x'| \}$.
For a fixed $0<\l <1$, define
\[
\tilde{U}(X) := \frac{[U - U_a P](\l X)}{\l^{k+1+\a+s}}.
\]
By construction, $\|\tilde{U}\|_{L^\infty(B_1)} \leq C$.
Furthermore,
\[
\La \tilde{U} = |y|^a F \quad\text{in } B_1 \setminus \mathcal{P}_\l
\]
where, since $P$ is an approximating polynomial for $U/U_a$,
\[
F(X) := \frac{f(\l x)}{\l^{k-1+\a+s}} - \frac{U_{a,\l}}{r_\l}\frac{h(\l X)}{\l^{k+\a}}.
\]
Here, $h(X) = \sum_{l=0}^k r^l h_l(x)$, and $h_l \in C^{k,\alpha}(B_1^*)$ have vanishing derivatives up to order $k-l$ at zero; recall the discussion on approximating polynomials in Section~\ref{sec: Schauder Cka}.
We decompose $\tilde{U} $ as $\tilde{U} = \tilde{U}_1 - \tilde{U}_2$ where $\tilde{U}_i \equiv 0$ on $\P_\l$ and $\La \tilde{U}_i = |y|^aF_i$ with
\[
F_1(x) := \frac{f(\l x)}{\l^{k-1+\a+s}} \qquad\text{and}\qquad F_2(X) := \frac{U_{a,\l}}{r_\l}\frac{h(\l X)}{\l^{k+\a}}. 
\]
Notice that $F_1 = F_1(x)$ is of class $C^{1,\a}$ with $C^{1,\a}$-norm bounded independently of $\l$ and that $F_2 \equiv 0$ on $\P_\l$.\\

{\it Proof of \eqref{eqn: partial in y expansion}.}
Since $\tilde{U}$ vanishes on $\mathcal{P}_\l$, by \eqref{eqn: Ua property}, 
\begin{equation*}
\label{eqn: bound on conjugate RHS}
% \begin{split}
\big||y|^a\partial_y F(X)\big| \leq  
C\frac{(r_\l - d_\l)^{1-s}}{r^2_\l}|X|^{k+\a} + C|y|^{a+1}\frac{U_{a,\l}}{r^3_\l}|X|^{k+\a} \leq C
% \end{split}
\end{equation*}
in $\mathcal{C} \cap (B_{7/8} \setminus \overline{B}_{1/8})$ where $\mathcal{C}  := \{ 2|z| > |x'| \}$.
Hence, Proposition~\ref{prop: derivative estimates} and Corollaries~\ref{cor: basic bd reg} and \ref{cor: y bd reg 2} (applied to $\tilde{U}_2$ and $\tilde{U}_1$ respectively) imply that 
\[
||y|^a\pa_y\tilde{U}| \leq C \quad \text{in }\mathcal{K} \cap (B_{3/4} \setminus \overline{B}_{1/4}).
\]
Expressing this in terms of $U$ and rescaling, \eqref{eqn: partial in y expansion} follows as $\l >0$ was arbitrary.\\

{\it Proof of \eqref{eqn: partial in x_i expansion}.}
Notice that $F$ is bounded independently of $\l$ in the region $\mathcal{C} \cap (B_{7/8} \setminus \overline{B}_{1/8})$.
Since $\tilde{U}$ vanishes on $\mathcal{P}_\l$, Proposition~\ref{prop: derivative estimates} and Corollary~\ref{cor: basic bd reg} imply that
\[
|\partial_i \tilde{U} | \leq C \quad\text{in } \mathcal{K} \cap (B_{3/4} \setminus \overline{B}_{1/4}).
\]
We need to improve this inequality to
\begin{equation}
\label{eqn: improved est}
|\partial_i \tilde{U} | \leq CU_{a,\l} \quad\text{in } \mathcal{K} \cap (B_{3/4} \setminus \overline{B}_{1/4}).
\end{equation} 
Let $\mathcal{K}^{\pm}$ be the upper and lower halves of $\mathcal{K}$ with respect to $x_n$, that is, 
\begin{equation}
\label{def: cone plus minus}
\mathcal{K}^+ := \mathcal{K} \cap \{x_n\geq 0\}\qquad \text{and } \qquad \mathcal{K}^- := \mathcal{K} \cap \{ x_n <0\}.
\end{equation}
In $\mathcal{K}^+ \cap (B_{3/4} \setminus \overline{B}_{1/4})$, we see that $U_{a,\l} \geq c$, 
%because $r_\l > C|x'|$ in $\mathcal{K}$ and $d_\l > 0$ in $\mathcal{K}^+$,
and so \eqref{eqn: improved est} is immediate in this region.
On the other hand, 
% in $\mathcal{K}^- \cap (B_{3/4} \setminus \overline{B}_{1/4})$, the functions $\tilde{U}$ and $F_2$ vanish on $\{ 2|z| > |x'| \} \cap \{ y = 0 \}$ and $|y|^a\pa_y F_2$ is bounded independently of $\l$ in $\{2 |z| > |x'| \} \cap (B_{7/8} \setminus \overline{B}_{1/8})$.
by Corollaries~\ref{cor: upgrade f is C1} and \ref{cor: upgrade f is 0} (applied to $\tilde{U}_1$ and $\tilde{U}_2$ respectively), \eqref{eqn: improved est} holds in $\mathcal{K}^- \cap (B_{3/4} \setminus \overline{B}_{1/4})$.
Thus, \eqref{eqn: partial in x_i expansion} follows as $\l > 0$ was arbitrary.
\end{proof}

We now prove Proposition~\ref{prop: deriv estimates}. The idea is to define a different extension $\tilde{\vphi}^{x_0}$ of $\vphi$ at every point in $\G \cap B^*_{1/2}$ in such a way that allows us to apply Lemma~\ref{lem: derivative est} to $\pa_n (\tilde{v} - \tilde{\vphi}^{x_0})$ in cones based at $x_0$.
Then, we patch the estimates from Lemma~\ref{lem: derivative est} together and conclude.

\begin{proof}[Proof of Proposition~\ref{prop: deriv estimates}]
For each $x_0 \in \G \cap B_{1/2}^*$, define
\[
\tilde{\vphi}^{x_0}(X) := \vphi(x) + \sum_{j = 1}^{\lfloor\frac{m}{2}\rfloor+ 1} \frac{(-1)^{j}}{c_j} y^{2j}\Delta^j T^{x_0}(x),
\]
with $c_j$ as in \eqref{eqn: expansion} and $T^{x_0}(x)$ the $m$th order Taylor polynomial of $\vphi$ at $x_0$. Set
\begin{equation*}
w^{x_0}(X) := \tilde{v}(X) - \tilde{\vphi}^{x_0}(X).
\end{equation*}
% Notice that $w^{x_0}\equiv 0 $ in $\P$ and $\| w^{x_0}\|_{L^\infty(B_2)} \leq C.$
Letting $X_0 := (x_0,0)\in \R^{n+1}$, 
%For any $i \in 1,\dots, n$ and $x_0 \in \G \cap B_{1/2}^*$, \cite[Proposition 4.3]{CSS} implies that $\pa_i w^{x_0}\in C^{0,\delta}(B_{1}(X_0))$ for all $0<\delta<s$ with $\| \pa_i w^{x_0}\|_{C^{0,\delta}(B_{1}(X_0))} \leq C$.
we again see that $\|\pa_n w^{x_0}\|_{C^{0,\delta}(B_{1}(X_0))} \leq 1$ and
\[
\begin{cases}
\La \partial_n w^{x_0} = |y|^a f^{x_0}_n &\text{in } B_1(X_0) \setminus \P \\
\partial_n w^{x_0} = 0 &\text{on } \P
\end{cases}
\qquad
\text{where}\qquad 
f^{x_0}_n := \Delta (\pa_n \vphi - \pa_n T^{x_0});
\]
and, by construction, $f_n^{x_0} = f_n^{x_0}(x)$ is of class $C^{m-3,\b}$ and $|f^{x_0}_n| \leq |x-x_0|^{m-3+\b}.$
So, up to a dilation, we apply Lemma~\ref{lem: derivative est} to $U = U^{X_0} :=\pa_n w^{x_0}$ for every $x_0 \in \G \cap B_{1/2}^*$ with right-hand side $f = f_n^{x_0}$.
As $\G \in C^{k+2,\a}$, after Taylor expanding $\nu_i$ and $ d $, we have 
\begin{equation}
\label{eqn: new i}
\Big|\pa_i U^{X_0} -  \frac{U_a}{r}P_{X_0}^i\Big| \leq \frac{U_a}{r} C|X-X_0|^{k+1+\a} \quad \text{in } \mathcal{K}_{X_0}
\end{equation}
and
\begin{equation}
\label{eqn: new r}
\Big|\na U^{X_0}\cdot \na r -  \frac{U_a}{r}P^r_{X_0}\Big| \leq \frac{U_a}{r} C|X-X_0|^{k+1+\a}\quad \text{in } \mathcal{K}_{X_0}
\end{equation}
where $P^i_{X_0}$ and $P^r_{X_0}$ are polynomials of degree $k+1$ and $\mathcal{K}_{X_0}$ is the rotation and translation of $\mathcal{K}$ centered at $x_0$ pointing in the direction $\nu(x_0)$.

We now show that \eqref{eqn: new i} and \eqref{eqn: new r} hold for $U^0$ in all of $B_1$.
Given any $X \in B_{1}$, let $X_0 \in \G$ be such that $r(X) = |X-X_0|$; note that $X \in \mathcal{K}_{X_0}$ and
\begin{equation}\label{eqn: cone comparability}
|X-X_0| \leq |X|.
\end{equation}
Then,
\begin{equation*}
\label{eqn: triangle}
\Big| \pa_i U^{0} -\frac{U_a}{r} P^i_{0} \Big| 
\leq \Big| \pa_i U^{X_0} -\frac{U_a}{r} P^i_{X_0} \Big| +\left| \pa_i U^0 -  \pa_i U^{X_0}\right|+\frac{U_a}{r} \left|  P^i_{X_0} - P^i_{0}\right| =
{\rm I} + {\rm II}+{\rm III}.
\end{equation*}
As $X\in \mathcal{K}_{X_0}$,  \eqref{eqn: new i} and \eqref{eqn: cone comparability} imply that
\[
{\rm I} \leq C\frac{U_a}{r}|X|^{k+1+\a}.
\]
Note that 
\[
{\rm II} 
\leq \sum_{j = 1}^{\lfloor\frac{m}{2}\rfloor+ 1} \frac{(-1)^{j}}{c_j} y^{2j}\left| \Delta^j \pa_{in} (T^{x_0}-  T^0 )\right| .
\]
If $2j+2>m$, then $\Delta^j \pa_{in} T^0 = \Delta^j \pa_{in} T^{x_0} = 0$. 
On the other hand, if $ 2j+2 \leq m$, then $\Delta^j \pa_{in} T^0 $ and $\Delta^j \pa_{in} T^{x_0} $ are the Taylor polynomials of degree $m-2j-2$ at zero and $x_0$ respectively for $\Delta^j \pa_{in} \vphi$. 
Therefore, again using \eqref{eqn: cone comparability},
\begin{align*}
|\Delta^j \pa_{in} (T^{x_0}- T^0)| %&\leq |\Delta^j \pa_{in} T^{x_0} (x) - \Delta^j \pa_{in} \vphi(x)| + | \Delta^j \pa_{in} T^0(x) - \Delta^j \pa_{in} \vphi(x)| \\
%& \leq C|X-X_0|^{m-2j-2 +\a} + C|X|^{m-2j-2 +\a}
\leq C|X|^{m-2j-2 +\a}.
\end{align*}
Recalling \eqref{eqn: Ua property} and that $m-2 \geq k+1$, we see that
\begin{align*}
{\rm II} 
\leq C|y|^{2j}|\Delta^j \pa_{in} (T^{x_0}- T^0)|
%&\leq  {\color{cyan}C|y|^{2j}|X|^{m-2j-2 +\a}} \\
%& = {\color{cyan}C\frac{U_a}{r} r(r-d)^s|y|^{2j-2s}|X|^{m-2j-2 +\a}}\\
%& {\color{cyan}\leq C\frac{U_a}{r} |X|^{m-1-s+\a}} 
&\leq C\frac{U_a}{r} |X|^{m-2+\a}
\leq  C\frac{U_a}{r} |X|^{k+1+\a}.
\end{align*}
Finally, to bound III, let $\l :=|X_0|$. 
Since $\G \subset\{|z|<|x'|\}$, the ball $B_{\l/2}(2\l \e_n)$ is contained in $\mathcal{K} \cap \mathcal{K}_{X_0}$. 
Observe that
\[
\frac{U_a}{r} \left|  P^i_{X_0} - P^i_{0}\right| 
\leq \bigg|\pa_i U^{X_0} - \frac{U_a}{r}P^i_{X_0} \bigg|+ \bigg| \pa_i U^0 - \frac{U_a}{r} P^i_{0} \bigg| + \left| \pa_i U^{X_0} - \pa_i U^0 \right|  \quad\text{in } B_{\l/2}(2\l \e_n).
\]
Noting that $|Z|$ and $|Z-X_0|$ are of order $\l$ for all $Z \in B_{\l/2}(2\l \e_n)$, the bound on ${\rm II}$ and \eqref{eqn: new i} imply that
$\|P^i_{X_0} - P^i_{0}\|_{L^\infty(B_{\l/2}(2\l\e_n))} \leq C\l^{k+1+\a}$.
Hence,
\[
\| P^i_{X_0} - P^i_{0}\|_{L^\infty(B_{4\l})} \leq C\l^{k+1 +\a},
\]
and, in particular, we determine that
\[
{\rm III} \leq C\frac{U_a}{r} |X|^{k+1+\a}.
\]
We conclude that \eqref{eqn: partial in i} holds for $U^0$ in $B_{1}$. 

An identical argument shows that \eqref{eqn: partial in r} holds for $U^0$ in $B_1$.
\end{proof}

Now we address the case when $\G \in C^{1,\a}$. The following proposition shows that \eqref{eqn: grad x C1a} and \eqref{eqn: partial y C1a} hold for $U = \pa_n w$, with $w$ defined as in \eqref{eqn: tilde u 0}.
\begin{proposition}
\label{lem: derivative est C1a}
Let $\G \in C^{1,\a}$ with $\| \G \|_{C^{1,\a} } \leq 1$.
Let $U \in C(B_1)$ be even in $y$ and normalized so that $U(\e_n/2) = 1$. Let $U \equiv 0$ on $\mathcal{P}$ and $U > 0$ in $B_1 \setminus \mathcal{P}$, and suppose $U$ satisfies
\begin{equation*}
\label{eqn: La U HOBH C1a grad est}
\La U =|y|^{a}f \quad\text{in }B_1 \setminus \mathcal{P}
\end{equation*}
where $f = f(x)$ and $f \in C^{0,1}(B_1)$ with $\|f\|_{C^{0,1}(B_1)} \leq 1$. 
Let $p'$ be the constant obtained in Proposition~\ref{prop: C1a ptwise Schauder}. 
Then,
\begin{equation*}
\label{eqn: partial in x_i expansion C1a}
|\nab_x U  -   p' \na_x U_a| \leq C\frac{U_a}{r}|X|^{\a}
\end{equation*}
and
\begin{equation*}
\label{eqn: partial in y expansion C1a}
| \partial_y U - p' \partial_yU_a| \leq C|y|^{-a}r^{-s}|X|^{\a}
\end{equation*}
for some constant $C = C(a,n,\a) > 0$.
\end{proposition}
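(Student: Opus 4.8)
The plan is to follow the two–step strategy used for Proposition~\ref{prop: deriv estimates}: first prove the estimates in a cone, then patch them together over basepoints on $\G$. After an initial dilation we may assume $\|\G\|_{C^{1,\a}} \leq \vep$ and $\|f\|_{C^{0,1}(B_1)} \leq \vep$ for $\vep$ as small as needed, and (as throughout the $C^{1,\a}$ theory) that $\a \in (0,1-s)$.

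\emph{Step 1: a cone estimate.} The first task is the $C^{1,\a}$ analogue of Lemma~\ref{lem: derivative est}, namely that $|\na_x U - p'\na_x U_a| \leq C\frac{U_a}{r}|X|^\a$ and $|\pa_y U - p'\pa_y U_a| \leq C|y|^{-a}r^{-s}|X|^\a$ hold in the cone $\mathcal{K} = \{|z| > |x'|\}$. As in Lemma~\ref{lem: derivative est}, for $\l \in (0,1)$ I would set
\[
\tilde{U}(X) := \frac{[U - p'\,U_{a,*}](\l X)}{c\,\l^{\a+s}},
\]
necessarily using the smooth regularization $U_{a,*}$ of Lemma~\ref{lem: Approximations} in place of $U_a$ (which is not smooth when $\G \in C^{1,\a}$), with $c$ a suitable universal constant. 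Proposition~\ref{prop: C1a ptwise Schauder}, Lemma~\ref{lem: Approximations}, and $U_a \leq r^s \leq |X|^s$ give $\|\tilde{U}\|_{L^\infty(B_1)} \leq 1$, and $\La\tilde{U} = |y|^a F$ with $F = F_1 - F_2$, where $F_1 = F_1(x) = c^{-1}\l^{2-\a-s}f(\l x)$ is $C^{0,1}$ with norm bounded independently of $\l$, while $F_2$ comes from $\La U_{a,*}$, satisfies $|F_2| \leq C r_\l^{s-2+\a}$, and vanishes on the slit $\mathcal{P}_\l$. Decomposing $\tilde{U} = \tilde{U}_1 - \tilde{U}_2$ accordingly, Corollaries~\ref{cor: basic bd reg} and \ref{cor: y bd reg 2} give $|\na_x\tilde{U}|,\ ||y|^a\pa_y\tilde{U}| \leq C$ on $\mathcal{K} \cap (B_{3/4}\setminus\overline{B}_{1/4})$; on the upper cone $\mathcal{K}^+$ one has $U_{a,\l} \geq c$, so this already suffices there, while on $\mathcal{K}^-$ — where the relevant portion of $\{y=0\}$ is the slit — Corollaries~\ref{cor: upgrade f is C1} and \ref{cor: upgrade f is 0} (applied to $\tilde{U}_1$ and $\tilde{U}_2$) upgrade this to $|\pa_i\tilde{U}| \leq C|y|^{2s}$, which on $\mathcal{K}^-\cap(B_{3/4}\setminus\overline{B}_{1/4})$ is comparable to $U_{a,\l}$. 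Undoing the rescaling (the exponents match because $a = 1-2s$ and $r \sim |X|$ in $\mathcal{K}$) and replacing $U_{a,*}, \na U_{a,*}$ by $U_a, \na U_a$ via Lemma~\ref{lem: Approximations} gives the cone estimate.

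\emph{Step 2: patching.} By the remark following Proposition~\ref{prop: C1a ptwise Schauder}, the cone estimate holds uniformly at every basepoint $X_0 = (x_0,0) \in \G \cap B_{1/2}^*$, producing $p'_{X_0}$ with $|p'_{X_0}| \leq C$ and $|\na_x U - p'_{X_0}\na_x U_a| \leq C\frac{U_a}{r}|X-X_0|^\a$ in $\mathcal{K}_{X_0}$, the rotation and translation of $\mathcal{K}$ based at $x_0$ pointing in the direction $\nu(x_0)$. Given $X \in B_{1/2}$, choose $X_0 \in \G$ with $r(X) = |X - X_0|$, so $X \in \mathcal{K}_{X_0}$ and $|X_0| \leq 2|X|$, and write $\na_x U - p'_0\na_x U_a = (\na_x U - p'_{X_0}\na_x U_a) + (p'_{X_0} - p'_0)\na_x U_a$. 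The first term is handled by the cone estimate at $X_0$; for the second, comparing the cone estimates at $0$ and $X_0$ on the overlap ball $B_{\l/2}(2\l\e_n) \subset \mathcal{K}\cap\mathcal{K}_{X_0}$ with $\l := |X_0|$ — on which $U_a \sim \l^s$, $|\na_x U_a| \sim \l^{s-1}$, and $|X|,|X-X_0| \sim \l$ — yields $|p'_{X_0} - p'_0| \leq C\l^\a$, hence $|(p'_{X_0} - p'_0)\na_x U_a| \leq C|X_0|^\a \frac{U_a}{r} \leq C|X|^\a\frac{U_a}{r}$. This proves the first estimate; the second is obtained verbatim.

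\emph{Main obstacle.} The crux is Step~1, and within it the behavior on $\mathcal{K}^-$, i.e.\ over the slit: there $U_a$ degenerates like $|y|^{2s}$, so the crude bound $|\na_x\tilde{U}| \leq C$ coming from the interior and boundary gradient estimates is too weak and must be sharpened to $|\na_x\tilde{U}| \leq CU_{a,\l}$; this is exactly where one uses that $f$ depends only on $x$ and is Lipschitz, invoking the fine growth estimates $|\pa_i\tilde{U}| \leq C|y|^{2s}$ of Corollaries~\ref{cor: upgrade f is C1} and \ref{cor: upgrade f is 0}. A secondary, bookkeeping-heavy point is the systematic use of Lemma~\ref{lem: Approximations} to pass between $U_a, r$ and their smooth surrogates $U_{a,*}, r_*$, both in defining $\tilde{U}$ and in converting the estimate with $U_{a,*}$ into the stated one, keeping every error genuinely of order $\frac{U_a}{r}|X|^\a$.
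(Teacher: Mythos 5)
Your overall architecture (a cone estimate at each basepoint, then patching via a comparison of the constants $p'_{X_0}$ on overlap regions) is the right one, and your Step 2 is essentially what the paper does. The problem is in Step 1, specifically in your treatment of $\tilde{U}_2$, the piece generated by $\La U_{a,*}$. To apply Corollary~\ref{cor: upgrade f is 0} (and the $y$-derivative part of Corollary~\ref{cor: basic bd reg}) to $\tilde{U}_2$ you need its right-hand side $F_2$ to vanish on $\{y=0\}$ \emph{and} to have $y^a\pa_y F_2 \in L^\infty$. Lemma~\ref{lem: Approximations} supplies neither: it only gives the magnitude bound $|\La U_{a,*}|\leq C|y|^a r^{s-2+\a}$, which says nothing about the trace of $\La U_{a,*}/|y|^a$ on the slit or about its conormal derivative. (Contrast this with Lemma~\ref{lem: derivative est}, where $F_2 = \frac{U_{a,\l}}{r_\l}\frac{h(\l X)}{\l^{k+\a}}$ has an explicit factor of $U_{a,\l}$, so the vanishing and the bound on $|y|^a\pa_y F_2$ are verified by hand via \eqref{eqn: Ua property}.) Making your route work would require redoing the Appendix computations to extract this finer structure of $\La U_{a,*}$ near $\mathcal{P}$; as written, the upgrade $|\pa_i\tilde{U}_2|\leq C|y|^{2s}$ on $\mathcal{K}^-$ is unjustified, and that is exactly the step you yourself identify as the crux.

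The paper's proof avoids the error term altogether by a different choice of comparison function: instead of the globally smooth $U_{a,*}$, it fixes a point $Z$ of differentiability of $r$ at distance $\l/2$ from $\G$, translates so that the projection of $Z$ onto $\G$ is the origin, and rescales $U - p'\bar{U}_a$ where $\bar{U}_a$ is the \emph{flat} profile attached to the tangent plane there. Since $\bar{U}_a$ is exactly $a$-harmonic, the right-hand side of the rescaled equation is purely $F(X)=\l^{2-s-\a}f(\l x)$, a Lipschitz function of $x$ alone, and Corollaries~\ref{cor: y bd reg 2} and \ref{cor: upgrade f is C1} apply directly with no decomposition. The price — that $\bar{U}_a$ only agrees with $U_a$ to first order at the single point $Z$ (cf.\ \eqref{eqn: equalities at Z}), so the estimate is obtained pointwise and must be assembled over all such $Z$ — is exactly the patching you already set up in Step 2. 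You should replace $U_{a,*}$ by this anchored flat profile; smoothness of the comparison function is not what is needed, exact $a$-harmonicity plus first-order agreement at the point of interest is.
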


\begin{proof}
Note that Proposition~\ref{prop: C1a ptwise Schauder} can be applied because $|\La U| \leq |y|^ar^{s-2+\a}$.
Let $Z$ be a point of differentiability for $r$ with distance $\l/2$ from $\G$.
Up to a translation, we may assume that the closest point on $\G$ to $Z$ is the origin.
So, at $Z$, we have that
\begin{equation}
\label{eqn: equalities at Z}
U_a = \bar U_a, \qquad r = \l/2, \qquad\text{and}\qquad  \na U_a = \na\bar U_a.
\end{equation}
Set
\[ 
\tilde{U}(X): =\frac{ [U- p'\bar{U}_a](\l X)}{\l^{\a+s}}
\]
and $\mathcal{C} : = \{  2|z| > |x'| \}$, and
let $\mathcal{K}$ and $\mathcal{K}^{\pm}$ be as in \eqref{def: cone} and \eqref{def: cone plus minus}. 
Arguing as in the proof of Lemma~\ref{lem: Approximations}, where we obtained that $|U_{a,*} - U_a| \leq CU_a r^\a$, we see that
\[
|U - p'\bar{U}_a | \leq |U - p' U_a | + |p'||\bar{U}_a - U_a| \leq C\l^{\a+s} \quad\text{in } \mathcal{C}  \cap \{\l/8 \leq |z| \leq 7\l/8 \}.
\]
Thus, $|\tilde{U}| \leq C$ in $ \mathcal{C}  \cap \{\l/8 \leq |z| \leq 7\l/8 \}$ for all $\l > 0$.
Notice that
\begin{equation*}
\label{eqn: La w C1a grad est}
\La \tilde{U} = |y|^aF \quad\text{in } B_1 \setminus \mathcal{P}_\l
\end{equation*}
where, recalling that $\bar{U}_a$ is $a$-harmonic in $B_1 \setminus \{ x_n \leq 0,\, y = 0 \}$,
\[
F(X) = \l^{2-s-\a}f(\l x).
\]
Observe that $F = F(x)$, $F \in C^{0,1}(B_1)$, and $\|F\|_{C^{0,1}(B_1)} \leq 1$.
Arguing as in Lemma~\ref{lem: derivative est} and by \eqref{eqn: equalities at Z}, we find that 
\[
|\pa_y U - p' \pa_y U_a| \leq C r^{\a - s} |y|^{-a}\qquad\text{and}\qquad
| \na_x U -  p' \na_x U_a |  \leq C U_a r^{\a -1} \quad\text{at } Z.
\]
Moreover, since the origin was distinguished by an arbitrary translation,
\[
|\pa_y U - p'_{X_0} \pa_y U_a| \leq C r^{\a - s} |y|^{-a}
\qquad\text{and}\qquad
| \na_x U -  p'_{X_0} \na_x  U_a | \leq C U_a r^{\a -1}
\]
for every $X \in B_{1/2}$ at which $r$ is differentiable, letting $X_0$ be the projection of $X$ onto $\G$ and $p'_{X_0}$ be the constant corresponding to the expansion of $U$ at $X_0$.
Since
\[
| p'_{X_0} - p'| \leq C|X|^{\a},
\]
the lemma follows.
\end{proof}

Finally, we prove Theorem~\ref{thm: main}.

\begin{proof}[Proof of Theorem~\ref{thm: main}] 
Let $U = \pa_n w$ and $u = \pa_i w$, with $w$ as defined in \eqref{eqn: tilde u 0}.
Thanks to the rescalings at the beginning of this section, we have $\| \G\|_{C^{1,\a}}\leq 1.$ Up to possible further rescaling,  $U:=\pa_nw >0$ in $B_1$; see \cite{CSS}. Thanks to \eqref{eqn: equation}, \eqref{eqn: bound}, and Proposition~\ref{lem: derivative est C1a}, the remaining hypotheses of Proposition~\ref{prop: HOBH 1 pos} are satisfied up to multiplication by a universal constant. 
So, applying Proposition~\ref{prop: HOBH 1 pos} to $u$ and $U$, we obtain the existence of a polynomial $P$ of degree $1$ that, after a Taylor expansion of $d$, yields
\begin{equation*}
\bigg| \frac{\pa_i(v-\vphi)}{ \pa_n(v - \vphi) }-P \bigg| \leq C|x|^{1+\a}.
\end{equation*}
Up to translation and rotation, we may argue identically with $x_0 \in \G \cap B_{1/2}^*$ in place of the origin.
Hence, $\pa_i(v-\vphi)/ \pa_n(v - \vphi)\in C^{1,\a}(B_{1/2}^*)$. 
By a well-known argument (cf. \cite[Theorem 6.9]{PSU}), this implies that $\G\cap B^*_{1/4}\in C^{2,\a}$

Passing from $C^{k+2,\a}$ to $C^{k+3,\a}$ for $0 \leq k \leq m-4$ is identical; here Proposition~\ref{prop: deriv estimates} is used to show the hypotheses \eqref{eqn: grad x} and \eqref{eqn: grad r} of Proposition~\ref{prop: HOBH k pos} are satisfied. 
Applying Proposition~\ref{prop: HOBH k pos}, we find that there exists a polynomial $P$ of degree $k+2$ such that, after restricting to the hyperplane $\{y=0\}$, 
\[
\bigg| \frac{\pa_i(v-\vphi)}{ \pa_n(v - \vphi) }-P\bigg| \leq C|x|^{k+2+\a}.
\]
In turn, this implies that $\G \in C^{k+3, \a}$.
Arguing iteratively for $k = 0 , \dots, m-4$, the theorem is proved.
\end{proof}

%~~~~~~~~~~~~~~~~~~~~~~~~~~~~~~~~~~~~~~~~~~~~~~~~~~~~~~~%
\section{Appendix}

In this section, we prove Lemmas~\ref{lem: Approximations} and \ref{lem: C1a barrier}.

\begin{proof}[Proof of Lemma~\ref{lem: Approximations}]

The regularizations of $r$ and $U_a$ are constructed in the same way as the analogous regularizations in \cite{DS5}.
First, we smooth the signed distance function $d$ via convolution in $\lambda$-neighborhoods of $\G$.
Then, we define approximations $r_\l$ and $U_{a,\l}$ in geometrically shrinking annuli, and we patch them together in a smooth way.
The functions $r_\l$ and $U_{a,\l}$ here should not be confused with the rescalings of $r$ and $U_a$ defined in \eqref{eqn: rescaling}.

The functions $d, r$, and $U_a$ are locally Lipschitz in $B_1 \setminus \mathcal{P}$, and are therefore differentiable almost everywhere.
When we speak of their derivatives, we assume we are at a point of differentiability. \\

\noindent{\it Step 1: Construction and estimates for the function $d_\l$.}\\\\ 
Define the set $\mathcal{D}_\l := \{ x \in \R^n : |d| < 4\l \}$.
Let $\eta \in C^\infty_0(B^*_{1/50})$ be a positive, radially symmetric function that integrates to $1$, and
set
\[
d_\l := d*\eta_\l
\]
where $\eta_\l := \l^{-n} \eta(x/\l)$.
As in \cite{DS5}, the following estimates hold for $d_\l$ in $\mathcal{D}_{\l}$:
\begin{equation}
\label{eqn: bounds for d lambda}
|d_\l - d| \leq C\l^{1+\a}, 
\qquad 
|\na d_\l - \na d|\leq C\l^\a, 
\qquad\text{and}\qquad 
|D^2d_\l| \leq C\l^{\a -1}.
\end{equation}
In particular, the gradient estimate implies
\begin{equation}
\label{eqn: d lambda gradient}
|\na d_\l| = 1+ O(\l^\a).
\end{equation}

\noindent{\it Step 2: Construction and estimates for the function $r_\l$.}\\\\ 
Let 
\begin{equation}
\label{eqn: def R lambda}
\mathcal{R}_\l := \{ \l/2 < r < 4\l\} \subset \mathcal{D}_\l \qquad\text{and}\qquad r_\l:= (d_\l^2 + y^2)^{1/2} \quad\text{in } \mathcal{R}_\l.
\end{equation}
The following estimates hold in $\mathcal{R}_\l$:
\begin{equation}
\label{eqn: bounds for r lambda}
| r_\l - r| \leq C\l^{1+\a}, \quad |\na r_\l - \na r | \leq C\l^\a, \quad | D^2 r_\l| \leq C\l^{-1}, \quad\text{and}\quad \bigg|\Delta r_\l - \frac{1}{r} \bigg| \leq C\l^{\a-1}.	
\end{equation}
Consequently, we have that
\begin{equation}
\label{eqn: useful r lambda bounds}
\bigg|\frac{r_\l}{r} -1 \bigg| \leq C\l^\a, 
\qquad
\bigg| \frac{1}{r_\l} - \frac{1}{r} \bigg| \leq C\l^{\a -1},
\end{equation}
and
\begin{equation}
\label{eqn: r lambda gradient}
|\na r_\l | = 1 + O(\l^\a).
\end{equation}
All of these estimates were shown in \cite{DS5}, so we do not reprove them here.
Furthermore, we find that
\begin{equation}
\label{eqn: bounds for r lambda 2}
\bigg|\La r_\l -\frac{2(1-s)|y|^a}{r}\bigg| \leq C|y|^a\l^{\a-1}.
\end{equation}
To show \eqref{eqn: bounds for r lambda 2}, we express $\La(r_\l^2)$ in two different ways:
\[
2r_\l \La r_\l + 2 |y|^a |\na r_\l|^2\, =\,\La (r_\l^2) 
\, = \, 2|y|^a d_\l \Delta_x d_\l + 2 |y|^a |\na d_\l|^2 + 4(1-s) |y|^a.
\]
Then, \eqref{eqn: r lambda gradient}, \eqref{eqn: d lambda gradient}, and the third bound in \eqref{eqn: bounds for d lambda} imply that 
\[
r_\l \La r_\l = |y|^a( -|\na r_\l|^2 + d_\l \Delta_x d_\l + |\na d_\l|^2 + 2(1-s)) = |y|^a(2(1-s)+O(\l^\a)).
\]
Hence, \eqref{eqn: bounds for r lambda 2} follows from \eqref{eqn: useful r lambda bounds}.\\

\noindent{\it Step 3: Construction and estimates for the function $U_{a,\l}$.}\\\\ 
We define
\[
U_{a,\l} := \Big( \frac{d_\l + r_\l}{2}\Big)^s \quad \text{in }\mathcal{R}_\l,
\]
with $\mathcal{R}_\l$ defined as in \eqref{eqn: def R lambda}. 
The following bounds hold for $U_{a,\l}$ in $\mathcal{R}_\l$:
\begin{equation}
\label{eqn: bounds on Ual}
\bigg| \frac{U_{a,\l}}{U_a} - 1\bigg| \leq C\l^a, 
\quad | \na_x U_{a,\l} - \na_x U_a | \leq C\l^{\a -1+s}, \quad\text{and}\quad
|\pa_y U_{a,\l} - \pa_y U_a| \leq \frac{C\l^{\a +s}}{|y|},
\end{equation}
as well as
\begin{equation}
\label{eqn: bounds on Ual 2}
|\La U_{a,\l} | \leq C |y|^{a} \l^{\a - 2 + s}.
\end{equation}
These estimates must be shown separately in the regions
\[
\mathcal{R}_\l^+ := \mathcal{R}_\l \cap \Big\{ d\geq -\frac{r}{2}\Big\} \qquad\text{and}\qquad \mathcal{R}_\l^{-} := \mathcal{R}_\l \cap \left\{ d<-\frac{r}{2}\right\}.
\]
\\
\noindent{\it Step 3a: Estimates in $\mathcal{R}_\l^+$.} In $\mathcal{R}_\l^+$, the functions $U_a$ and $U_{a,\l}$ are comparable to $\l^s$. 
Also,
\[
U_{a,\l} = U_a \Big( \frac{ r_\l + d_\l}{r+d} \Big)^s .
\]
From the established bounds on $r_\l$ and $d_\l$ (\eqref{eqn: bounds for d lambda} and \eqref{eqn: bounds for r lambda}), we have 
\[
\frac{r_\l +d_\l}{r+d}  =  1+O(\l^\a) \qquad\text{and}\qquad
\na\Big( \frac{r_\l + d_\l}{r+d}\Big) = O(\l^{\a-1}).
\]
Therefore, 
\[
\bigg| \frac{U_{a,\l} }{U_a} -1\bigg| = \bigg| \Big( \frac{r_\l + d_\l}{r+d}\Big)^s -1\bigg| = O(\l^\a) 
\qquad \text{and} \qquad \na U_{a,\l} = \na U_a +O(\l^{s-1+\a}),
\]
proving all three estimates in \eqref{eqn: bounds on Ual} in $\mathcal{R}_\l^+$.
(Notice that we have actually shown a stronger estimate for the $y$-derivative in $\mathcal{R}_\l^+$.)
To determine the bound on $L_a U_{a,\l}$, we compute 
\[
s\La\big(U_{a,\l}^{1/s}\big) = U_{a,\l}^{1/s-1}\La U_{a,\l} + |y|^a\Big(\frac{1-s}{s}\Big) U_{a,\l}^{1/s -2} |\na U_{a,\l}|^2.
\]
On the other hand,
\[
\La (U_{a,\l}^{1/s}) = \La \Big(\frac{r_\l + d_\l}{2}\Big) 
= \frac{(1-s)|y|^a}{r} + |y|^aO( \l^{\a-1}).
\]
Together these estimates imply that
\begin{align*}
U_{a,\l}^{1/s-1} \La U_{a,\l} &= s|y|^a \Big( -\Big(\frac{1-s}{s^2}\Big) U_{a,\l}^{1/s -2}|\na U_{a,\l}|^2 + \frac{2(1-s)}{r} + O(\l^{\a-1})\Big) = |y|^a   O(\l^{\a-1}),
\end{align*}
where the second equality follows by \eqref{eqn: useful r lambda bounds} and using that
\[
|\na U_{a,\l}|^2 = s^2 \frac{U_{a,\l}^{2-1/s}}{r_\l} + O(\l^{\a-2 + 2s}) \quad\text{in } \mathcal{R}_\l^+.
\]
Multiplying by $U_{a,\l}^{1-1/s}$, we obtain \eqref{eqn: bounds on Ual 2}.
\\\\
\noindent{\it Step 3b: Estimates in $\mathcal{R}_\l^-$.}
In $\mathcal{R}_\l^-$, the functions $U_a$ and $U_{a, \l}$ are comparable to $|y|^{2s}\l^{-s}$. 
Indeed, $\frac{3}{2} r < r-d < 2r$ and $r+d = y^2/(r-d)$.
Thus, from \eqref{eqn: bounds for d lambda} and \eqref{eqn: bounds for r lambda}, we observe that
\[
\frac{r - d}{r_\l - d_\l }  =  1+O(\l^\a) \qquad\text{and}\qquad
\na\bigg( \frac{r - d}{r_\l - d_\l}\bigg) = O(\l^{\a-1}).
\]
As a consequence,
\[
U_{a,\l}  = U_a \Big(\frac{r-d}{r_\l - d_\l}\Big)^s = U_a (1+ O(\l^\a)).
\]
To see the $x$-gradient estimate in \eqref{eqn: bounds on Ual}, since $|\nab_x U_a| = sU_a/r \leq C\l^{s-1}$, we compute 
\[
\na_x U_{a,\l} = \na_x U_a + O(\l^{\a -1 + s}).
\]
Similarly, using that $|\partial_y U_a| = s|y|^{-1}U_a (r-d)/r \leq C|y|^{-1}\l^s$, we find that
\[
\partial_y U_{a,\l} = \partial_y U_a(1 + O(\l^{\a})) + U_aO(\l^{\a-1}) = \partial_y U_a + |y|^{-1}O(\l^{\a + s}).
\]
This proves \eqref{eqn: bounds on Ual}.
Finally, we compute $\La U_{a,\l}$ directly:
\begin{align*}
2^s\La U_{a,\l} &= \ \La (|y|^{2s} (r_\l - d_\l)^{-s} ) 
% &= 2s \pa_y( y|y|^{-1}(r_\l - d_\l )^{-s}) +  \DIV(|y| \na (r_\l - d_\l)^{-s} ) \\
= (1+2s)\frac{y}{|y|} \pa_y( (r_\l -d_\l)^{-s})  + |y| \Delta( (r_\l - d_\l)^{-s}) \\
&=  -\frac{s(1+2s)|y|}{r_\l(r_\l - d_\l)^{s+1}} 
-\frac{s|y| \Delta(r_\l -d_\l) }{(r_\l - d_\l)^{s+1}}+\frac{s(s+1)|y| |\na (r_\l - d_\l)|^2}{(r_\l - d_\l)^{s+2}}.
\end{align*}
Noting that $|\na (r_\l - d_\l)|^2  =  r_\l^{-2} (2r_\l (r_\l - d_\l) + O(\l^{2+\a}))$, recalling \eqref{eqn: bounds for r lambda} and \eqref{eqn: useful r lambda bounds}, and simplifying, we see that \eqref{eqn: bounds on Ual 2} holds.
\\\\
\noindent{\it Step 4: Construction and estimates for $r_*$ and $U_{a,*}.$}\\\\ 
The functions $r_*$ and $U_{a, *}$ are constructed by letting $\l_k = 4^{-k}$ and smoothly interpolating between $r_{\l_k}$ and $U_{a,\l}$ respectively with
\begin{align*}
r_* := 
\begin{cases}
r_{\l_k} &\text{in } \mathcal{R}_{\l_k} \cap\{ r\leq 2\l_k\} \\
r_{4\l_k} &\text{in } \mathcal{R}_{\l_k} \cap \{ r> 3\l_k\} 
\end{cases} 
\qquad\text{and}\qquad 
U_{a,*} := 
\begin{cases}
U_{a,\l_k} &\text{in } \mathcal{R}_{\l_k} \cap\{ r\leq 2\l_k\}\\
U_{a, 4\l_k} &\text{in } \mathcal{R}_{\l_k} \cap \{ r> 3\l_k\}
\end{cases}
\end{align*}
with $r_*$ and $U_{a,*}$ smooth in the intermediate region. 
More specifically, this is accomplished by defining
\[ 
\psi(t) := 
\begin{cases}
1 &\text{if } t \leq 2+\frac{1}{4}\\
0 &\text{if } t \geq 2+\frac{3}{4}
\end{cases}
\]
that is smooth for $2+\frac{1}{4}<t<2+\frac{3}{4}$, 
letting $\Psi := \psi( r_\l/\l )$, and setting
\[
r_* := \Psi r_\l + (1-\Psi )r_{4\l} \qquad\text{and}\qquad U_{a,*} :=\Psi U_{a,\l} + (1-\Psi )U_{a,4\l}.
\]

We use the estimates in \eqref{eqn: bounds for r lambda} to show the following bounds on $r_*$ in $\mathcal{R}_\l$:

\begin{equation}
\label{eqn: bounds for r star}
\begin{split}
| r_* - r| \leq C\l^{1+\a}, 
\qquad 
&|\na r_* - \na r | \leq C\l^\a, \\
|\pa_y r_* - \pa_y r| \leq C|y|^a r^{s-1}U_a \l^\a,
\qquad\text{and}\qquad
&\bigg|\La r_* -\frac{2(1-s)|y|^a}{r}\bigg| \leq C|y|^a\l^{\a-1}.
\end{split}
\end{equation}
Indeed, the first estimate follows from \eqref{eqn: bounds for r lambda} and since $0\leq \Psi \leq 1$. 
Next, keeping \eqref{eqn: r lambda gradient} in mind, the following estimates hold for $\Psi$:
\begin{equation}
\label{eqn: bounds on Psi}
% \begin{split}	
|\na \Psi| 
% &= \l^{-1}|h'(\l^{-1}r_\l)\na r_\l | 
\leq C\l^{-1}, \qquad
|D^2\Psi|  
\leq C\l^{-2}, \qquad\text{and}\qquad
|\pa_y \Psi| 
\leq C |y| \l^{-2}.
% \end{split}
\end{equation}
The remaining three inequalities in \eqref{eqn: bounds for r star} follow from \eqref{eqn: bounds on Psi}, the established estimates on $r_\l$, and an explicit computation.

Next using \eqref{eqn: bounds on Ual} and \eqref{eqn: bounds on Ual 2}, we show the following hold for $U_{a,*}$ in $\mathcal{R}_\l$:
\begin{equation}
\label{eqn: bounds on Ua star} 
\begin{split}
\bigg| \frac{U_{a,*}}{U_a} - 1\bigg| \leq C\l^\a,
\qquad 
\bigg|\frac{|\na U_{a,*}|}{|\na U_a |} - 1\bigg| \leq C\l^\a, 
\qquad\text{and}\qquad
|\La U_{a,*} | \leq C |y|^{a} \l^{\a - 2 + s}.
\end{split}
\end{equation}
The first inequality follows trivially.
Since $U_a/r \leq C|\nab U_a|$, we find that $\nab U_{a, \l} = \nab U_a(1 + O(\l^\a))$.
Hence, the second inequality in \eqref{eqn: bounds on Ua star} holds utilizing the first inequalities in \eqref{eqn: bounds on Ual} and \eqref{eqn: bounds on Psi}.
Finally, from \eqref{eqn: bounds on Ual} and \eqref{eqn: bounds on Psi} and an simple computation, one justifies the third estimate in \eqref{eqn: bounds on Ua star}.

Given \eqref{eqn: bounds for r star} and \eqref{eqn: bounds on Ua star}, the lemma follows.
\end{proof}

We now prove Lemma~\ref{lem: C1a barrier}. 

\begin{proof}[Proof of Lemma~\ref{lem: C1a barrier}]
We construct upper and lower barriers.
Define the upper barrier
\[ 
v_+ := U_{a,*} - U_{a,*}^{\beta}
\]
for some $\beta > 1$ that will be chosen.
Note that $v_+ \geq 0 $ on $\pa B_1 \cup \mathcal{P}$ since $\beta > 1$.
From Lemma~\ref{lem: Approximations}, we have that
\[
|\La U_{a,*}| \leq C_*\vep |y|^a r^{\a -2 + s}, \qquad
|U_{a,*}|  \leq  C_*r^s, \qquad\text{and}\qquad | \na U_{a, *}|^2 \geq c\frac{U_{a,*}^{2-1/s}}{r}.
\]
The third inequality holds provided that $0 < \vep \leq 1/2C_*$.
Indeed, 
\[
|\nab U_{a,*}| \geq (1-C_*\vep)|\nab U_a| = (1-C_*\vep)s\frac{U_{a}^{1-1/2s}}{r^{1/2}} \geq c\frac{U_{a,*}^{1-1/2s}}{r^{1/2}}.
\]
Now, observe that
\[
\begin{split}
\La v_+ 
&= \La U_{a,*} - \beta U_{a,*}^{\beta -1} \La U_{a,*} - |y|^a \beta(\beta-1)U_{a,*}^{\beta -2 }|\nab U_{a,*}|^2 \\
&\leq |y|^a(C_*\vep r^{\a -2 + s} - cr^{s\beta - 2} )
\end{split}
\]
if $\beta - 1/s < 0$.
Setting $\beta = 1 + \a/s$, we see that $\beta - 1/s < 0$ (recall that $\a \in (0,1-s)$), and so choosing $\vep > 0$ smaller if necessary, we deduce that 
\[
\La v_+ \leq  -c|y|^ar^{\a -2 + s}.
\]
We take $v_-:=-v_+$ as a lower barrier. 
The maximum principle ensures that 
\[
|u| \leq C |v_\pm| \leq C U_{a, *} \leq C U_a.
\]
\end{proof}

%~~~THE BIBLIOGRAPHY~~~%

\end{document}